\newcommand{\BB}{\mathbf{B}}
\newcommand{\RR}{\mathbb{R}}
\newcommand{\NN}{\mathbb{N}}
\newcommand{\levdist}{1.2cm}
\renewcommand{\p@enumii}{}
\DeclareMathOperator{\card}{card}
\newtheorem{theorem}{Theorem}[section]
\newtheorem{lemma}[theorem]{Lemma}
\newtheorem{corollary}[theorem]{Corollary}
\newtheorem{proposition}[theorem]{Proposition}
\theoremstyle{definition}
\newtheorem{definition}[theorem]{Definition}
\newtheorem{problem}[theorem]{Problem}
\newtheorem{conjecture}{Conjecture}[section]
\theoremstyle{remark}
\newtheorem{example}[theorem]{Example}
\newtheorem{remark}[theorem]{Remark}
\numberwithin{equation}{section}
\date{}
\journal{}
\begin{document}

\begin{frontmatter}

\title{Labeled trees generating complete, compact, and discrete ultrametric spaces}

\author[1]{Oleksiy Dovgoshey\corref{cor1}}
\ead{oleksiy.dovgoshey@gmail.com}

\author[2]{Mehmet K\"{u}\c{c}\"{u}kaslan}
\ead{mkucukaslan@mersin.edu.tr}

\address[1]{Department of Theory of Functions, Institute of Applied Mathematics and Mechanics of NASU, Dobrovolskogo str. 1, Slovyansk 84100, Ukraine}

\address[2]{Department of Mathematics, Faculty of Science and Arts, Mersin University, 33343 Mersin, Turkey}

\cortext[cor1]{Corresponding author}

\begin{abstract}
We investigate the interrelations between labeled trees and ultrametric spaces generated by these trees. The labeled trees, which generate complete ultrametrics, totally bounded ultrametrics, and discrete ones, are characterized up to isomorphism. As corollary, we obtain a characterization of labeled trees generating compact ultrametrics, and discrete totally bounded ultrametrics. It is also shown that every ultrametric space generated by labeled tree contains a dense discrete subspace.
\end{abstract}

\begin{keyword}
locally finite tree \sep rayless tree \sep compactness \sep completeness \sep total boundedness

\MSC[2020] Primary 05C63 \sep 05C05 \sep Secondary 54E35
\end{keyword}

\end{frontmatter}

\section{Introduction}

The following problem was raised in 2001 by I.~M.~Gelfand: Using graph theory describe up to isometry all finite ultrametric spaces~\cite{Lem2001}. An appropriate representation of finite, ultrametric spaces by monotone trees was proposed by V.~Gurvich and M.~Vyalyi in~\cite{GV2012DAM}. A simple geometric description of Gurvich---Vyalyi representing trees was found in \cite{PD2014JMS}. This description allows us effectively use the Gurvich---Vyalyi representation in various problems associated with finite ultrametric spaces. In particular, it leads to a graph-theoretic interpretation of the Gomory---Hu inequality \cite{DPT2015}. A characterization of finite ultrametric spaces which are as rigid as possible also was obtained \cite{DPT2017FPTA} on the basis of the Gurvich---Vyalyi representation. Some other extremal properties of finite ultrametric spaces and related them properties of monotone rooted trees have been found in~\cite{DP2020pNUAA}. The interconnections between the Gurvich---Vyalyi representation and the space of balls endowed with the Hausdorff metric are discussed in~\cite{Dov2019pNUAA} (see also \cite{Qiu2009pNUAA, Qiu2014pNUAA, DP2018pNUAA, Pet2018pNUAA, Pet2013PoI}).

The Gurvich---Vyalyi representing trees can be considered as a subclass of finite trees endowed with some special labeling on vertex set. The trees with labeled vertices are studied by many mathematicians and there are a number of interesting results in this directions. In survey~\cite{GalTEJoC2019}, J.~Gallian writes that over 200 graph labelings techniques have been studied in over 2800 paper during the past 50 years. In this regards, we only note that, in almost all studies of trees with labeled vertices, it is assumed that the trees are finite. The infinite trees endowed with positive real labelings on the set of edges are known as the so-called \(R\)-trees (see~\cite{Ber2019SMJ} for some interesting results related to \(R\)-trees and ultrametrics). A description of interrelations between finite subtrees of \(R\)-trees and finite, monotone rooted trees can be found in~\cite{Dov2020TaAoG}. The categorical equivalence of trees and ultrametric spaces was investigated in \cite{H04} and \cite{Lem2003AU}.

Motivated by Gurvich---Vyalyi representation of finite ultrametric spaces and some results of Bruhn, Diestel, Halin, K\"{u}hn, Pott, Spr\"{u}ssel, and Stein \cite{BDSJGT2005, BDCPC2006, BSCPC2010, DKEJC2004, DieJCTSB2006, DieDM2011, DSAM2011, DSTA2011, DSDM2012, DieAMSUH2017, DPJCTSB2017} on topological aspects of infinite graphs we consider infinite trees whose vertices are labeled by nonnegative real numbers and ultrametric spaces generated by such trees.

The paper is organized as follows. Section~\ref{sec2} and Section~\ref{sec3} contain some necessary concepts and facts from the theory of metric spaces and graph theory, respectively. In Section~\ref{sec4} we introduce into consideration the ultrametric spaces \((V(T), d_l)\) generated by non-degenerate vertex labelings \(l\) of arbitrary trees \(T\). The first main result of the paper is Theorem~\ref{t4.7} characterizing, up to isomorphism, the labeled trees \(T(l)\) for which the corresponding ultrametric spaces \((V(T), d_l)\) are complete. The characterizations of labeled trees generating discrete ultrametrics and totally bounded ones are found in Theorem~\ref{t4.11} and Theorem~\ref{t11.8}, respectively. Using these results we describe, up to isomorphism, the labeled trees generating discrete totally bounded ultrametrics in Theorem~\ref{t11.16}. The final result of Section~\ref{sec4} is Theorem~\ref{t4.18} characterizing the labeled trees \(T(l)\) for which the ultrametric spaces \((V(T), d_l)\) are compact. The last fifth section contains some conjectures and examples related to subject of the paper.

\emph{Concluding remarks}. The results obtained in the paper indicate a close connection between the combinatorial properties of an infinite tree and the properties of ultrametric spaces generated by labelings on its vertex set.
\begin{itemize}
\item A tree \(T\) is rayless if and only if every ultrametric generated by vertex labeling is complete (Corollary~\ref{c4.7}).
\item \(T\) is locally finite if and only if every ultrametric generated by vertex labeling is discrete (Corollary~\ref{c4.12}).
\item \(T\) is rayless, at most countable, and has no adjacent vertices of infinite degree if and only if there is vertex labeling generating a compact ultrametric (Theorem~\ref{t7}).
\item \(T\) is locally finite if and only if there is vertex labeling generating a discrete totally bounded ultrametric (Corollary~\ref{c4.18}).
\end{itemize}
It seems interesting to study similar problems for general infinite connected graphs using the spanning trees technique. Another promising direction of research is the study of ultrametric spaces generated by some special labelings. For example, we can consider the case when the label of a vertex depends on the degree of this vertex.

\section{Definitions and facts from theory of metric spaces}\label{sec2}

Let us start from basic concepts. In what follows, we will denote by \(\RR^{+}\) the half-open interval \([0, \infty)\) and write \(\NN\) for the set of all positive integers, \(\{1, 2, \ldots\}\).

A \textit{metric} on a set $X$ is a function $d\colon X\times X\rightarrow \RR^+$ such that for all \(x\), \(y\), \(z \in X\)
\begin{enumerate}
\item $d(x,y)=d(y,x)$,
\item $(d(x,y)=0)\Leftrightarrow (x=y)$,
\item \(d(x,y)\leq d(x,z) + d(z,y)\).
\end{enumerate}

A metric space \((X, d)\) is \emph{ultrametric} if the \emph{strong triangle inequality}
\[
d(x,y)\leq \max \{d(x,z),d(z,y)\}.
\]
holds for all \(x\), \(y\), \(z \in X\). In this case the function \(d\) is called \emph{an ultrametric} on \(X\).

\begin{definition}\label{d2.2}
Let \((X, d)\) and \((Y, \rho)\) be metric spaces. A bijective mapping \(\Phi \colon X \to Y\) is said to be an \emph{isometry} if
\[
d(x,y) = \rho(\Phi(x), \Phi(y))
\]
holds for all \(x\), \(y \in X\). The metric spaces are \emph{isometric} if there is an isometry of these spaces.
\end{definition}

Let \((X, d)\) be a metric space. An \emph{open ball} with a \emph{radius} \(r > 0\) and a \emph{center} \(c \in X\) is the set 
\[
B_r(c) = \{x \in X \colon d(c, x) < r\}.
\]

We denote by \(\mathbf{B}_X\) the set of all open balls in \((X, d)\).

A sequence \((x_n)_{n \in \mathbb{N}} \subseteq X\) is a \emph{Cauchy sequence} in \((X, d)\) if, for every \(r > 0\), there is an integer \(n_0 \in \NN\) such that \(x_n \in B_r(x_{n_0})\) for every \(n \geqslant n_0\). It is easy to see that \((x_n)_{n \in \mathbb{N}}\) is a Cauchy sequence if and only if 
\begin{equation*}
\lim_{n \to \infty} \sup\{d(x_n, x_{n+k}) \colon k \in \mathbb{N}\} = 0.
\end{equation*}

\begin{remark}\label{r2.17}
Here and later the symbol \((x_n)_{n\in \mathbb{N}} \subseteq X\) means that \(x_n \in X\) holds for every \(n \in \mathbb{N}\).
\end{remark}

There exists a comfortable ``ultrametric modification'' of the notion of Cauchy sequence (see, for example, \cite[p.~4]{PerezGarcia2010} or \cite[Theorem~1.6, Statement~(13)]{Comicheo2018}).

\begin{proposition}\label{p4.5}
Let \((X, d)\) be an ultrametric space. A sequence \((x_n)_{n \in \mathbb{N}} \subseteq X\) is a Cauchy sequence if and only if the limit relation
\begin{equation*}
\lim_{n \to \infty} d(x_n, x_{n+1}) = 0
\end{equation*}
holds.
\end{proposition}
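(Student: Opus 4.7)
The forward implication is essentially free: the characterization of Cauchy sequences stated just before the proposition says $(x_n)_{n \in \mathbb{N}}$ is Cauchy exactly when $\lim_{n \to \infty} \sup\{d(x_n, x_{n+k}) : k \in \mathbb{N}\} = 0$, and since $d(x_n, x_{n+1}) \leqslant \sup\{d(x_n, x_{n+k}) : k \in \mathbb{N}\}$, the desired limit relation follows immediately. This direction uses only the ordinary triangle inequality, and would hold in any metric space.

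For the reverse implication I would proceed by induction on $k$ to establish the ``telescoping'' estimate
\[
d(x_n, x_{n+k}) \leqslant \max_{0 \leqslant i \leqslant k-1} d(x_{n+i}, x_{n+i+1})
\]
for all $n, k \in \mathbb{N}$. The base case $k = 1$ is trivial, and the inductive step is a single application of the strong triangle inequality: $d(x_n, x_{n+k}) \leqslant \max\{d(x_n, x_{n+1}), d(x_{n+1}, x_{n+k})\}$, after which the induction hypothesis, applied to the second argument, yields the estimate. This is the one place where the ultrametric hypothesis is essential — in an ordinary metric space, the analogous bound would be a sum rather than a maximum, and the argument would collapse.

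From this estimate, taking the supremum over $k \in \mathbb{N}$ gives
\[
\sup\{d(x_n, x_{n+k}) : k \in \mathbb{N}\} \leqslant \sup\{d(x_m, x_{m+1}) : m \geqslant n\}.
\]
Under the hypothesis $\lim_{n \to \infty} d(x_n, x_{n+1}) = 0$, the right-hand side tends to $0$ as $n \to \infty$, and therefore so does the left-hand side. Invoking once more the equivalent form of the Cauchy condition recorded in the excerpt, I conclude that $(x_n)_{n \in \mathbb{N}}$ is a Cauchy sequence in $(X, d)$.

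There is no real obstacle here; the only subtle point is to avoid sloppiness about the supremum, making sure that the bound in the inductive estimate is uniform in $k$ so that it survives passage to the supremum.
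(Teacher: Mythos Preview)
Your proof is correct and follows the standard argument. Note, however, that the paper does not actually supply a proof of this proposition: it is stated as a known fact with citations to \cite[p.~4]{PerezGarcia2010} and \cite[Theorem~1.6, Statement~(13)]{Comicheo2018}, so there is no ``paper's own proof'' to compare against. Your write-up is exactly the expected one --- the inductive telescoping bound via the strong triangle inequality is the canonical route --- and would serve perfectly well as a self-contained replacement for the citation.
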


A sequence \((x_n)_{n \in \mathbb{N}}\) of points in a metric space \((X, d)\) is said to be convergent to a point \(a \in X\),
\begin{equation*}
\lim_{n \to \infty} x_n = a,
\end{equation*}
if, for every open ball \(B\) containing \(a\), it is possible to find an integer \(n_0 \in \NN\) such that \(x_n \in B\) for every \(n \geqslant n_0\). Thus, \((x_n)_{n \in \mathbb{N}}\) is \emph{convergent} to \(a\) if and only if 
\begin{equation*}
\lim_{n \to \infty} d(x_n, a) = 0.
\end{equation*}
A sequence is convergent if it is convergent to some point. It is clear that every convergent sequence is a Cauchy sequence.

The next proposition follows, for example, from Theorem~6.8.3 in \cite{Sea2007}.

\begin{proposition}\label{p2.7}
Let \((X, d)\) be a metric space and let \((x_n)_{n \in \NN} \subseteq X\) be a Cauchy sequence in \((X, d)\). Then \((x_n)_{n \in \NN}\) is convergent if and only if it has a convergent subsequence.
\end{proposition}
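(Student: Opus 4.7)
The plan is to prove both implications. The forward direction (``only if'') is immediate: if \((x_n)_{n \in \NN}\) converges to some \(a \in X\), then \((x_n)_{n \in \NN}\) itself is a convergent subsequence of the original sequence, so the conclusion is trivial.

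For the nontrivial direction (``if''), I would assume that the Cauchy sequence \((x_n)_{n\in\NN}\) admits a convergent subsequence \((x_{n_k})_{k \in \NN}\) with limit, say, \(a \in X\), and show that in fact \(\lim_{n\to\infty} x_n = a\). The idea is the classical ``halve \(\varepsilon\) and use the triangle inequality'' argument. Fix \(\varepsilon > 0\). From the Cauchy property choose \(N_1 \in \NN\) such that \(d(x_n, x_m) < \varepsilon/2\) whenever \(n, m \geqslant N_1\); from the convergence \(x_{n_k} \to a\) choose \(K \in \NN\) such that \(d(x_{n_k}, a) < \varepsilon/2\) whenever \(k \geqslant K\). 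Since \(n_k \to \infty\) strictly with \(k\), I can fix some \(k_0 \geqslant K\) with \(n_{k_0} \geqslant N_1\). Then for every \(n \geqslant N_1\) the triangle inequality gives
\[
d(x_n, a) \leqslant d(x_n, x_{n_{k_0}}) + d(x_{n_{k_0}}, a) < \frac{\varepsilon}{2} + \frac{\varepsilon}{2} = \varepsilon,
\]
which establishes \(\lim_{n\to\infty} d(x_n, a) = 0\), i.e., convergence of the full sequence to \(a\).

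There is no serious obstacle: this is a standard foundational fact, which explains why the authors simply cite \cite{Sea2007}. The only mild subtlety is making sure that the chosen subsequence index \(n_{k_0}\) lies past the Cauchy threshold \(N_1\), which is guaranteed by the strict monotonicity of \((n_k)_{k\in\NN}\). Note that the argument uses only the ordinary triangle inequality and does not invoke the ultrametric structure, so it applies in any metric space, matching the generality of the statement.
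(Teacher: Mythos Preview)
Your proof is correct and is exactly the standard argument for this classical fact. The paper does not supply its own proof of Proposition~\ref{p2.7}; it simply cites Theorem~6.8.3 of \cite{Sea2007}, so there is nothing to compare against beyond noting that your argument is the expected one.
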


Now we present a definition of \emph{total boundedness}.

\begin{definition}\label{d2.10}
A subset \(A\) of a metric space \((X, d)\) is totally bounded if for every \(r > 0\) there is a finite set \(\{B_r(x_1), \ldots, B_r(x_n)\} \subseteq \mathbf{B}_X\) such that
\[
A \subseteq \bigcup_{i = 1}^{n} B_r(x_i).
\] 
\end{definition}

There exists a simple interdependence between the total boundedness of a set \(A \subseteq X\) and Cauchy sequences in \(A\).

\begin{proposition}\label{p2.11}
A subset \(A\) of a metric space \((X, d)\) is totally bounded if and only if every sequence of points of \(A\) contains a Cauchy subsequence. 
\end{proposition}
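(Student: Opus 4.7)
The plan is to prove both directions by standard arguments, one via a diagonal selection and the other by contrapositive.

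For the forward direction, I would assume $A$ is totally bounded and let $(x_n)_{n \in \NN} \subseteq A$ be arbitrary. For each $k \in \NN$, cover $A$ by finitely many open balls of radius $1/k$. I would then build nested subsequences $(x_n^{(k)})_{n \in \NN}$ recursively: starting with $(x_n^{(0)}) = (x_n)$, at step $k$ the pigeonhole principle guarantees that among the finitely many balls of radius $1/k$ covering $A$, at least one contains infinitely many terms of $(x_n^{(k-1)})$; take $(x_n^{(k)})$ to be the corresponding subsequence. The diagonal sequence $y_k := x_k^{(k)}$ then satisfies $d(y_k, y_{k+j}) < 2/k$ for every $j \in \NN$, hence is Cauchy.

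For the reverse direction, I would argue by contrapositive: suppose $A$ is not totally bounded. Then there exists $r > 0$ admitting no finite cover of $A$ by open balls of radius $r$. I would inductively select points $x_1, x_2, \ldots \in A$ with the property that $x_{n+1} \notin \bigcup_{i=1}^{n} B_r(x_i)$; such a choice is possible at every stage precisely because no finite collection of such balls covers $A$. By construction, $d(x_i, x_j) \geqslant r$ whenever $i \neq j$, so no subsequence of $(x_n)_{n \in \NN}$ can be Cauchy (any Cauchy subsequence would have consecutive distances tending to $0$, contradicting this lower bound).

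Neither direction presents a genuine obstacle; the main care point is just to state the diagonal construction cleanly (ensuring the chosen covering balls at level $k$ have radius $1/k$ so the resulting diagonal terms satisfy an effective Cauchy estimate) and to verify that the inductive choice in the reverse direction never exhausts $A$, which follows directly from the assumed failure of total boundedness. Since this is a textbook fact for general metric spaces, I would likely just cite a standard reference rather than reproduce both directions in full detail.
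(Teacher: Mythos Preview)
Your argument is correct, and your closing suggestion to simply cite a standard reference is exactly what the paper does: it gives no proof at all and merely points to Theorem~7.8.2 in Searc\'{o}id's \emph{Metric Spaces}. Your diagonal-extraction and contrapositive arguments are the standard ones and are fine, but they go beyond what the paper itself provides.
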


See, for example, Theorem~7.8.2 \cite{Sea2007}.

\begin{corollary}\label{c2.17}
Let \((X, d)\) be a metric space. If \(A \subseteq X\) is totally bounded in \((X, d)\) and \(C\) is a subset of \(A\), then \(C\) is totally bounded in \((A, d|_{A \times A})\).
\end{corollary}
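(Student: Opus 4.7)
The plan is to reduce the claim to the sequential characterization of total boundedness provided by Proposition~\ref{p2.11}, since that characterization is intrinsic to the metric values taken on points of the subset and therefore transfers painlessly from \((X, d)\) to the restricted space \((A, d|_{A \times A})\).

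Concretely, I would start by taking an arbitrary sequence \((c_n)_{n \in \NN} \subseteq C\). Since \(C \subseteq A\), the sequence \((c_n)_{n \in \NN}\) is also a sequence in \(A\). By hypothesis \(A\) is totally bounded in \((X, d)\), so Proposition~\ref{p2.11} gives a subsequence \((c_{n_k})_{k \in \NN}\) which is a Cauchy sequence in \((X, d)\). The next step is to observe that being Cauchy is a property depending only on the values \(d(c_{n_k}, c_{n_k + j})\), and for these pairs the metric \(d\) and its restriction \(d|_{A \times A}\) agree. Hence \((c_{n_k})_{k \in \NN}\) is also a Cauchy sequence in \((A, d|_{A \times A})\). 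Since \((c_n)_{n \in \NN}\) was an arbitrary sequence in \(C\), another application of Proposition~\ref{p2.11}, this time to the metric space \((A, d|_{A \times A})\), yields that \(C\) is totally bounded in \((A, d|_{A \times A})\).

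There is essentially no obstacle here; the only point that requires a moment of care is the verification that the Cauchy property is preserved when we pass from \((X, d)\) to \((A, d|_{A \times A})\), and this is immediate because the defining inequalities only involve distances between points of \(A\). An alternative, more hands-on proof would start from Definition~\ref{d2.10}, cover \(A\) by finitely many balls \(B_{r/2}(x_1), \ldots, B_{r/2}(x_n)\) in \((X, d)\), discard those that do not meet \(C\), and for each remaining index pick \(a_i \in A \cap B_{r/2}(x_i)\); then the balls of radius \(r\) centered at the \(a_i\) (now genuine balls in \((A, d|_{A \times A})\)) cover \(C\). I would prefer the sequential route since it is shorter and uses a tool the paper has already set up.
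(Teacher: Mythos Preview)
Your proposal is correct and aligns with the paper's own treatment: the paper states Corollary~\ref{c2.17} immediately after Proposition~\ref{p2.11} without giving any proof, so the intended argument is precisely the sequential one you wrote out. There is nothing to add or correct.
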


The next basic for us concept is the concept of \emph{completeness}.

\begin{definition}\label{d2.6}
A metric space \((X, d)\) is complete if for every Cauchy sequence \((x_n)_{n \in \mathbb{N}} \subseteq X\) there is a point \(a \in X\) such that 
\[
\lim_{n \to \infty} x_n = a.
\]
\end{definition}

Thus, a metric space is complete if and only if the set of Cauchy sequences coincides with the set of convergent sequences in this space.

An important subclass of complete metric spaces is the class of \emph{compact} metric spaces.

\begin{definition}[Borel---Lebesgue property]\label{d2.3}
Let \((X, d)\) be a metric space. A subset \(A\) of \(X\) is compact if every family \(\mathcal{F} \subseteq \mathbf{B}_X\) satisfying the inclusion
\[
A \subseteq \bigcup_{B \in \mathcal{F}} B
\]
contains a finite subfamily \(\mathcal{F}_0 \subseteq \mathcal{F}\) such that 
\[
A \subseteq \bigcup_{B \in \mathcal{F}_0} B.
\]
\end{definition}

A standard definition of compactness usually formulated as: Every open cover of \(A\) in \(X\) has a finite subcover.

The following classical theorem was proved by Frechet and it is a ``compact'' analog of Proposition~\ref{p2.11}. 

\begin{proposition}[Bolzano---Weierstrass property]\label{p2.9}
A subset \(A\) of a metric space is compact if and only if every sequence of points of \(A\) contains a subsequence which converges to a point of \(A\).
\end{proposition}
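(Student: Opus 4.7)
The plan is to prove both implications separately, leveraging the preceding results on Cauchy sequences and total boundedness. For the forward direction, assume \(A\) is compact and let \((x_n)_{n \in \NN} \subseteq A\). Suppose, toward a contradiction, that no subsequence of \((x_n)_{n \in \NN}\) converges to a point of \(A\). Then for every \(a \in A\) there is an open ball \(B_{r_a}(a)\) containing only finitely many terms of the sequence, since otherwise one could extract a subsequence converging to \(a\). The family \(\{B_{r_a}(a) : a \in A\}\) is an open cover of \(A\), so Definition~\ref{d2.3} produces a finite subfamily still covering \(A\); but a finite union of such balls contains only finitely many terms of \((x_n)\), contradicting the fact that infinitely many terms lie in \(A\). (If \((x_n)\) takes only finitely many distinct values, one value is repeated infinitely often and produces a trivial convergent subsequence directly.)

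For the converse, suppose every sequence in \(A\) contains a subsequence convergent to a point of \(A\). Since every convergent sequence is Cauchy, Proposition~\ref{p2.11} gives that \(A\) is totally bounded. Next, I claim that any open cover \(\mathcal{F} \subseteq \BB_X\) of \(A\) admits a Lebesgue number, namely a \(\delta > 0\) such that for each \(x \in A\) there exists \(F \in \mathcal{F}\) with \(B_\delta(x) \subseteq F\). If not, then for every \(n \in \NN\) there is \(x_n \in A\) such that \(B_{1/n}(x_n)\) is not contained in any element of \(\mathcal{F}\). Passing to a subsequence \((x_{n_k})\) with \(x_{n_k} \to a \in A\), pick \(F \in \mathcal{F}\) containing \(a\) and choose \(\varepsilon > 0\) with \(B_\varepsilon(a) \subseteq F\). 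For \(k\) large enough one has both \(d(x_{n_k}, a) < \varepsilon/2\) and \(1/n_k < \varepsilon/2\); the strong (or ordinary) triangle inequality then yields \(B_{1/n_k}(x_{n_k}) \subseteq B_\varepsilon(a) \subseteq F\), contradicting the choice of \(x_{n_k}\). Given such a \(\delta\), total boundedness produces finitely many balls \(B_\delta(y_1), \ldots, B_\delta(y_m)\) covering \(A\); selecting \(F_i \in \mathcal{F}\) with \(B_\delta(y_i) \subseteq F_i\) yields the desired finite subcover \(\{F_1, \ldots, F_m\} \subseteq \mathcal{F}\).

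The main obstacle is the Lebesgue number step in the converse direction. The forward direction reduces to a direct application of the Borel---Lebesgue definition, and the extraction of total boundedness from sequential compactness is immediate from Proposition~\ref{p2.11}. However, to convert the information ``small balls cover \(A\)'' into the conclusion ``finitely many elements of \(\mathcal{F}\) cover \(A\)'', one must find a \emph{uniform} radius at which small balls are subordinate to the cover; producing this uniform radius by contradiction together with sequential compactness is the technical heart of the argument.
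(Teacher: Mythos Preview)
The paper does not actually prove Proposition~\ref{p2.9}: it is stated as a classical theorem attributed to Fr\'echet, with no argument given. So there is nothing to compare your proof against in the paper itself.

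Your argument is essentially correct and follows the standard route (sequential compactness via cluster points for the forward direction; total boundedness plus a Lebesgue number for the converse). One small technical slip is worth flagging. In the final step you apply total boundedness to obtain balls \(B_\delta(y_1), \ldots, B_\delta(y_m)\) covering \(A\) and then invoke the Lebesgue number to find \(F_i \in \mathcal{F}\) with \(B_\delta(y_i) \subseteq F_i\). But Definition~\ref{d2.10} in this paper allows the centers \(y_i\) to lie anywhere in \(X\), while your Lebesgue number only guarantees \(B_\delta(x) \subseteq F\) for \(x \in A\). The fix is routine: apply total boundedness with radius \(\delta/2\), discard any ball disjoint from \(A\), and recenter each remaining ball at a point of \(A\) (so \(B_{\delta/2}(y_i) \subseteq B_\delta(a_i)\) for some \(a_i \in A\)); then the Lebesgue number applies to the \(a_i\). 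With this adjustment the proof is complete.
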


The next corollary shows that the class of compact metric spaces is the intersection of the class of complete metric spaces with the class of totally bounded ones.

\begin{corollary}[Spatial Criterion]\label{c2.9}
A metric space is compact if and only if this space is complete and totally bounded.
\end{corollary}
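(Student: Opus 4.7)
The plan is to derive the criterion by stringing together the three sequential characterizations already on the table: Proposition~\ref{p2.9} (compactness equals sequential compactness), Proposition~\ref{p2.11} (total boundedness equals every sequence has a Cauchy subsequence), and Proposition~\ref{p2.7} (a Cauchy sequence with a convergent subsequence is itself convergent). Both implications become short arguments about sequences, so I would avoid any covering argument entirely.

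For the forward direction, assume \((X,d)\) is compact. To show total boundedness, take an arbitrary sequence \((x_n)_{n\in\NN}\subseteq X\); by Proposition~\ref{p2.9} it has a subsequence converging to some point of \(X\), and since every convergent sequence is a Cauchy sequence (as noted after Definition~\ref{d2.2}'s paragraph on convergence), this subsequence is Cauchy. Hence Proposition~\ref{p2.11} yields total boundedness. To show completeness, let \((x_n)_{n\in\NN}\) be a Cauchy sequence; applying Proposition~\ref{p2.9} again it has a convergent subsequence, and Proposition~\ref{p2.7} then upgrades the whole sequence to a convergent one, so Definition~\ref{d2.6} is met.

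For the reverse direction, assume \((X,d)\) is complete and totally bounded. Let \((x_n)_{n\in\NN}\subseteq X\) be any sequence. By Proposition~\ref{p2.11} it has a Cauchy subsequence \((x_{n_k})_{k\in\NN}\), and by completeness this subsequence converges to some \(a\in X\). Thus every sequence in \(X\) has a subsequence convergent to a point of \(X\), which is precisely the criterion in Proposition~\ref{p2.9} for compactness.

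I do not foresee a real obstacle: the corollary is essentially a bookkeeping exercise once Propositions~\ref{p2.7}, \ref{p2.9}, and~\ref{p2.11} are in hand. The only subtlety worth flagging is that in the forward direction one must cite Proposition~\ref{p2.7} to pass from the convergence of a subsequence to the convergence of the original Cauchy sequence; skipping this step is the typical slip to guard against.
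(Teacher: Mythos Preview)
Your argument is correct and cleanly assembles Propositions~\ref{p2.7}, \ref{p2.9}, and~\ref{p2.11} into a proof of both implications. There is nothing to fix.

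As for comparison with the paper: the paper does not actually prove Corollary~\ref{c2.9}. Immediately after stating it, the authors simply refer the reader to \cite[p.~206]{Sea2007} for this and other compactness criteria. So your proposal supplies a self-contained proof where the paper only cites an external source. Your choice to route everything through the sequential characterizations already stated in Section~\ref{sec2} is natural in this context, since it keeps the exposition internal to the paper's own toolkit rather than depending on the covering-style arguments one might find in the cited textbook.
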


This and other criteria of compactness can be found, for example, in \cite[p.~206]{Sea2007}.

Let \((X, d)\) be a metric space and let \(S \subseteq X\). The set \(S\) is said to be \emph{dense} in \((X, d)\) if for every \(a \in X\) there is a sequence \((s_n)_{n\in \mathbb{N}} \subseteq S\) such that
\[
a = \lim_{n \to \infty} s_n.
\]

Recall that a point \(p\) of a metric space \((X, d)\) is \emph{isolated} if there is \(\varepsilon > 0\) such that \(d(p, x) > \varepsilon\) for every \(x \in X \setminus \{p\}\). If \(p\) is not an isolated point of \(X\), then \(p\) is called an \emph{accumulation point} of \(X\). We say that a set \(A \subseteq X\) is \emph{discrete} if all points of \(A\) are isolated.

It will be shown in Proposition~\ref{p4.13} of Section~\ref{sec4} that every ultrametric space, generated by labeled graph, contains a dense discrete subset.

\section{Definitions and facts from graph theory}\label{sec3}

A \textit{graph} is a pair $(V,E)$ consisting of a set $V$ and a set $E$ whose elements are unordered pairs \(\{u, v\}\) of different points \(u\), \(v \in V\). For a graph $G=(V,E)$, the sets \(V=V(G)\) and \(E=E(G)\) are called \textit{the set of vertices} and \textit{the set of edges}, respectively. A graph \(G\) is \emph{finite} if \(V(G)\) is a finite set. If \(\{x,y\} \in E(G)\), then the vertices \(x\) and \(y\) are called \emph{adjacent}. In what follows we will always assume that \(E(G) \cap V(G) = \varnothing\).

Recall that a graph \(G_1\) is a \emph{subgraph} of a graph \(G\) if
\[
V(G_1) \subseteq V(G) \quad \text{and} \quad E(G_1) \subseteq E(G).
\]
In this case we will write \(G_1 \subseteq G\). If \(\{G_i \colon i \in I\}\) is a family of subgraphs of a graph \(G\), then, by definition, the union \(\bigcup_{i \in I} G_i\) is a subgraph \(G^{*}\) of \(G\) such that 
\[
V(G^{*}) = \bigcup_{i \in I} V(G_i) \quad \text{and} \quad E(G^{*}) = \bigcup_{i \in I} E(G_i).
\]
Similarly, the intersection \(\bigcap_{i \in I} G_i\) is a subgraph \(G_{*}\) of \(G\) with
\begin{equation}\label{e3.4}
V(G_{*}) = \bigcap_{i \in I} V(G_i) \quad \text{and} \quad E(G_{*}) = \bigcap_{i \in I} E(G_i).
\end{equation}

Let \(v^*\) be a vertex of a graph \(G\). The \emph{neighborhood} \(N(v^*) = N_G(v^{*})\) is a subgraph of \(G\) induced by all vertices adjacent to \(v^{*}\). Thus, we have
\begin{align*}
V(N(v^{*})) &= \{u \in V(G) \colon \{u, v^{*}\} \in E(G)\},\\
E(N(v^{*})) &= \bigl\{\{u, v\} \in E(G) \colon u, v \in V(N(v^{*}))\bigr\}
\end{align*}
for every graph \(G\) and each \(v^{*} \in V(G)\). Let \(k\) be a cardinal number. The vertex \(v\) of a graph \(G\) has \emph{degree} \(k\) if 
\[
k = \card V(N(v)).
\]
The degree of \(v\) will be denoted as \(\delta_{G}(v)\) or simply as \(\delta(v)\).

A \emph{path} is a finite graph \(P\) whose vertices can be numbered without repetitions so that
\begin{equation}\label{e3.3}
V(P) = \{x_1, \ldots, x_k\} \quad \text{and} \quad E(P) = \{\{x_1, x_2\}, \ldots, \{x_{k-1}, x_k\}\}
\end{equation}
with \(k \geqslant 2\). We will write \(P = (x_1, \ldots, x_k)\) or \(P = P_{x_1, x_k}\) if \(P\) is a path satisfying \eqref{e3.3} and said that \(P\) is a \emph{path joining \(x_1\) and \(x_k\)}. A graph \(G\) is \emph{connected} if for every two distinct vertices of \(G\) there is a path \(P \subseteq G\) joining these vertices.

A finite graph \(C\) is a \textit{cycle} if there exists an enumeration of its vertices without repetition such that \(V(C) = \{x_1, \ldots, x_n\}\)  and 
\[
E(C) = \{\{x_1, x_2\}, \ldots, \{x_{n-1}, x_n\}, \{x_n, x_1\}\} \quad \text{with } n \geqslant 3.
\]

\begin{definition}\label{d9.8}
A connected graph \(T\) with \(V(T) \neq \varnothing\) and without cycles is called a \emph{tree}.
\end{definition}

A tree \(T\) is \emph{locally finite} if the inequality \(\delta(v) < \infty\) holds for every \(v \in V(T)\).

We shall say that a tree \(T\) is a \emph{star} if there is a vertex \(c \in V(T)\), the \emph{center} of \(T\), such that \(c\) and \(v\) are adjacent for every \(v \in V(T) \setminus \{c\}\).

An infinite graph \(G\) of the form 
\[
V(G) = \{x_1, x_2 \ldots, x_n, x_{n+1}, \ldots\}, \quad E(G) = \{\{x_1, x_2\}, \ldots \{x_n, x_{n+1}\}, \ldots\},
\]
where all \(x_n\) are assumed to be distinct, is called a \emph{ray} \cite{Diestel2017}. It is clear that every ray is a tree. A graph is \emph{rayless} if it contains no rays.

\begin{proposition}\label{p3.2}
Every infinite connected graph has a vertex of infinite degree or contains a ray.
\end{proposition}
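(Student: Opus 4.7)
The plan is to prove the contrapositive: if the infinite connected graph $G$ is locally finite (every vertex has finite degree), then $G$ contains a ray. The overall strategy is to build a breadth-first spanning tree of $G$ rooted at some vertex and pull a ray out of it by a pigeonhole argument, which is essentially K\"onig's Infinity Lemma.

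First I will fix $v_0 \in V(G)$ and set $V_n$ to be the set of vertices at graph-distance exactly $n$ from $v_0$. An induction on $n$ shows each $V_n$ is finite: $V_0 = \{v_0\}$, and $V_{n+1} \subseteq \bigcup_{u \in V_n} V(N_G(u))$ is a finite union of finite sets. Connectedness gives $V(G) = \bigcup_{n \geq 0} V_n$, so infinitude of $G$ forces every $V_n$ to be non-empty: if some $V_k = \varnothing$, then no vertex could have distance $> k$ from $v_0$ (since any shortest path passes through every intermediate level), making $V(G)$ a finite union of finite sets, a contradiction. Next, for each $v \in V(G) \setminus \{v_0\}$ with $v \in V_n$, I select one neighbor $p(v) \in V_{n-1}$ (such exists by definition of the BFS level) and let $T$ be the spanning subgraph of $G$ with edge set $\{\{v, p(v)\} \colon v \neq v_0\}$. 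Then $T$ is connected (iterating $p$ gives a path from every vertex down to $v_0$) and acyclic (each non-root has a unique $T$-edge to the previous level), hence $T$ is a tree, and $\delta_T(v) \leq \delta_G(v) < \infty$.

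Finally I construct the ray recursively in $T$: I maintain the invariant that the subtree of $T$-descendants below $v_n \in V_n$ is infinite. The base case is $v_0$, where this subtree is all of $T$. For the inductive step, $v_n$ has only finitely many $T$-children, and its descendants partition among the subtrees rooted at those children, so by pigeonhole some child $v_{n+1} \in V_{n+1}$ has an infinite descendant subtree. The resulting sequence $v_0, v_1, v_2, \ldots$ lies in pairwise distinct levels $V_n$ (hence consists of pairwise distinct vertices) and is edge-successive in $T \subseteq G$, so it is a ray in $G$. The main care points are the inductive finiteness of the BFS levels (which uses both connectedness and local finiteness) and the verification that the auxiliary subgraph $T$ is indeed a tree; neither presents a serious obstacle, and beyond these checks the proof is the standard K\"onig argument.
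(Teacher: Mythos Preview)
Your argument is correct: the BFS levels are finite by local finiteness, non-empty by infinitude and connectedness, the parent-function subgraph \(T\) is a spanning tree (the acyclicity follows since every edge of \(T\) joins consecutive levels and each non-root has exactly one \(T\)-edge to the level below), and the recursive descent through a child with infinitely many descendants is exactly K\"onig's Infinity Lemma.

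There is, however, nothing in the paper to compare your proof against: the paper does not prove this proposition at all but merely cites Proposition~8.2.1 of Diestel's \emph{Graph Theory}. Your self-contained argument is therefore an addition rather than an alternative, and it is the standard proof one would find in most textbook treatments (including, essentially, Diestel's).
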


For the proof see Proposition~8.2.1 in \cite{Diestel2017}.

The following statement is well known for finite trees (see, for example, Proposition~4.1 \cite{BM2008}) and is usually considered self-evident for infinite trees. 

\begin{lemma}\label{l9.14}
In each tree, every two different vertices are connected by exactly one path.
\end{lemma}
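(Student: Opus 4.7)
The plan is to separate existence and uniqueness. Existence comes at once from Definition~\ref{d9.8}: a tree is by definition connected, and the connectedness condition in Section~\ref{sec3} already supplies, for every two distinct vertices $u,v\in V(T)$, a path $P\subseteq T$ joining them. The substantive content of the lemma therefore lies in uniqueness, and here I would argue by contradiction, manufacturing a cycle out of two different joining paths and invoking the acyclicity clause of Definition~\ref{d9.8}.

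Concretely, assume $P_{1}=(u=x_{1},x_{2},\ldots,x_{m}=v)$ and $P_{2}=(u=y_{1},y_{2},\ldots,y_{n}=v)$ are two different paths in $T$ joining the same pair of vertices. Because $x_{1}=y_{1}=u$ and $P_{1}\neq P_{2}$, the smallest index $i$ with $x_{i}\neq y_{i}$ exists and satisfies $i\geq 2$, so in particular $x_{i-1}=y_{i-1}$. Because $x_{m}=y_{n}$, the smallest index $j\geq i$ for which $x_{j}\in\{y_{i},y_{i+1},\ldots,y_{n}\}$ also exists; write $x_{j}=y_{k}$ with $k\geq i$. I would then form the subgraph $C\subseteq T$ whose vertex set is
\[
\{x_{i-1},x_{i},\ldots,x_{j}\}\cup\{y_{i},y_{i+1},\ldots,y_{k-1}\}
\]
and whose edges are the edges of $P_{1}$ along the segment $x_{i-1},x_{i},\ldots,x_{j}$ together with the edges of $P_{2}$ along the segment $y_{i-1}=x_{i-1},y_{i},\ldots,y_{k}=x_{j}$. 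Enumerating the vertices as $x_{i-1},x_{i},\ldots,x_{j}=y_{k},y_{k-1},\ldots,y_{i}$ and closing with the edge $\{y_{i},y_{i-1}\}=\{y_{i},x_{i-1}\}$ exhibits $C$ as a cycle in $T$, contradicting Definition~\ref{d9.8}.

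The main obstacle is the bookkeeping needed to confirm that $C$ is honestly a cycle in the sense of the paper: its vertices must admit an enumeration without repetition, and it must have at least three vertices. The absence of repetitions within the segment $x_{i-1},\ldots,x_{j}$ and within $y_{i-1},\ldots,y_{k}$ follows because $P_{1}$ and $P_{2}$ are themselves paths, and the minimality of $j$ rules out any vertex of $\{x_{i},\ldots,x_{j-1}\}$ coinciding with a vertex of $\{y_{i},\ldots,y_{k-1}\}$. The cardinality bound splits into cases: when $j=i$ one has $k>i$ and the cycle uses $\{x_{i-1},x_{i}\}\cup\{y_{i},\ldots,y_{k-1}\}$, giving at least three vertices; when $k=i$ one has $j>i$ and the cycle uses $\{x_{i-1},x_{i},\ldots,x_{j}\}$, again at least three; and when $j>i$ and $k>i$ it uses even more. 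Once these combinatorial checks are discharged, $C$ is a cycle inside $T$, the acyclicity of $T$ is violated, and $P_{1}=P_{2}$ must hold.
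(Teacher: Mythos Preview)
Your argument is correct but follows a different route from the paper's. The paper does not construct an explicit cycle; instead it observes that for any two paths \(P_1, P_2 \subseteq T\) joining the same pair of vertices, the union \(P_1 \cup P_2\) is a finite connected acyclic subgraph of \(T\), hence itself a finite tree, and then invokes the finite-tree version of the statement (cited from Bondy--Murty, Proposition~4.1) to conclude \(P_1 = P_2\). Your proof is longer and requires the index bookkeeping you flag, but it is fully self-contained and works uniformly for finite and infinite \(T\) without appealing to an outside reference. The paper's reduction is essentially a one-liner but treats the finite case as a black box; your approach actually carries out the cycle construction that underlies that black box.
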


\begin{proof}
If \(T\) is an infinite tree and \(v_1\), \(v_2\) are two different vertices of \(T\) connected by some paths \(P_1 \subseteq T\) and \(P_2 \subseteq T\), then the graph \(P_1 \cup P_2\) is a finite connected subgraph of \(T\). Since \(T\) does not have cycles, \(P_1 \cup P_2\) is also acyclic. Hence, \(P_1 \cup P_2\) is a finite tree and \(P_1\), \(P_2\) are paths connected \(v_1\) and \(v_2\) in \(P_1 \cup P_2\). Thus, \(P_1 = P_2\) holds.
\end{proof}

In the next definition we introduce an analogue of convex hull for arbitrary trees.

\begin{definition}\label{d3.6}
Let \(T\) be a tree and let \(A\) be a nonempty subset of \(V(T)\). A subtree \(H_A\) of the tree \(T\) is the \emph{hull} of \(A\) if \(A \subseteq V(H_A)\) and, for every subtree \(T^{*}\) of \(T\), the tree \(H_A\) is a subtree of \(T^{*}\) whenever \(A \subseteq V(T^*)\).
\end{definition}

Thus, \(H_A\) is the smallest subtree of \(T\) which contains \(A\). We want to make sure that for every tree \(T\) and each nonempty \(A \subseteq V(T)\) the hull \(H_A\) is well defined and unique.

\begin{proposition}\label{p3.7}
Let \(T\) be a tree, \(A\) be a nonempty subset of \(V(T)\) and let \(\mathcal{F}_A\) be the set of all subtrees \(T^{*}\) of \(T\) for which the inclusion \(A \subseteq V(T^{*})\) holds. Then the graph \(\bigcap_{T^{*} \in \mathcal{F}_A} T^{*}\) is the hull of \(A\),
\begin{equation}\label{p3.7:e1}
H_A = \bigcap_{T^{*} \in \mathcal{F}_A} T^{*}.
\end{equation}
\end{proposition}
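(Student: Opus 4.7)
The plan is to verify that the right-hand side $G_{*} := \bigcap_{T^{*} \in \mathcal{F}_A} T^{*}$ literally satisfies Definition~\ref{d3.6}, and then to deduce uniqueness from the defining property. First I note that $\mathcal{F}_A$ is nonempty, since $T$ itself belongs to $\mathcal{F}_A$, so $G_{*}$ is defined by formula~\eqref{e3.4}. Since $A \subseteq V(T^{*})$ for every $T^{*} \in \mathcal{F}_A$, we immediately get $A \subseteq V(G_{*})$; in particular $V(G_{*})$ is nonempty. Moreover $G_{*} \subseteq T$ and $T$ has no cycles, so $G_{*}$ is acyclic.

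The main point is that $G_{*}$ is connected, and this is where Lemma~\ref{l9.14} does the real work. Take any two vertices $u, v \in V(G_{*})$. By Lemma~\ref{l9.14}, there is a unique path $P_{u,v}$ joining $u$ and $v$ in $T$. I claim that $P_{u,v} \subseteq T^{*}$ for every $T^{*} \in \mathcal{F}_A$. Indeed, fix such a $T^{*}$. Since $u, v \in V(G_{*}) \subseteq V(T^{*})$ and $T^{*}$ is a tree, there is a path $P' \subseteq T^{*}$ joining $u$ and $v$. But $T^{*} \subseteq T$, so $P'$ is also a path in $T$ joining $u$ and $v$, and uniqueness in Lemma~\ref{l9.14} forces $P' = P_{u,v}$. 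Hence $P_{u,v} \subseteq T^{*}$. Intersecting over all $T^{*} \in \mathcal{F}_A$, all vertices and edges of $P_{u,v}$ lie in $G_{*}$, i.e., $P_{u,v} \subseteq G_{*}$. Thus $G_{*}$ is connected, and combined with the previous paragraph, $G_{*}$ is a subtree of $T$ with $A \subseteq V(G_{*})$, so $G_{*} \in \mathcal{F}_A$.

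It remains to check the minimality condition in Definition~\ref{d3.6}. If $T^{*}$ is any subtree of $T$ with $A \subseteq V(T^{*})$, then $T^{*} \in \mathcal{F}_A$, so $G_{*} \subseteq T^{*}$ by the very definition of the intersection. This shows $G_{*}$ satisfies the two clauses of Definition~\ref{d3.6}, so it is a hull of $A$, which also establishes existence (previously only hypothesized in Definition~\ref{d3.6}). For uniqueness, if $H_A$ and $H_A'$ both satisfy Definition~\ref{d3.6}, then applying the minimality clause in both directions yields $H_A \subseteq H_A'$ and $H_A' \subseteq H_A$, whence $H_A = H_A'$; in particular $H_A = G_{*}$, which is~\eqref{p3.7:e1}.

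The only nontrivial ingredient is the connectedness of $G_{*}$; that step is not automatic for intersections of subgraphs in general, and it is the uniqueness of paths in trees (Lemma~\ref{l9.14}) that makes it go through. The rest is just unwinding Definition~\ref{d3.6} and formula~\eqref{e3.4}.
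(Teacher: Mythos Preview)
Your proof is correct and follows essentially the same approach as the paper: both arguments reduce the claim to showing that the intersection $G_{*}$ is connected, and both establish connectedness by invoking Lemma~\ref{l9.14} to force the unique path $P_{u,v}$ in $T$ to lie inside every $T^{*} \in \mathcal{F}_A$. You are slightly more explicit than the paper in verifying the minimality clause of Definition~\ref{d3.6} and in spelling out uniqueness of the hull via mutual inclusion, but these are expository additions rather than a different method.
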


\begin{proof}
It is clear that \(\bigcap_{T^{*} \in \mathcal{F}_A} T^{*}\) is a subgraph of \(T^{*}\) for every \(T^{*} \in \mathcal{F}_A\). In particular, \(\bigcap_{T^{*} \in \mathcal{F}_A} T^{*}\) is a subgraph of \(T\) because \(T \in \mathcal{F}_A\). Since \(T\) is a tree, the subgraph \(\bigcap_{T^{*} \in \mathcal{F}_A} T^{*}\) contains no cycles. Hence, to prove \eqref{p3.7:e1} it suffices to show that \(\bigcap_{T^{*} \in \mathcal{F}_A} T^{*}\) is connected.

Let \(u\) and \(v\) be distinct vertices of \(\bigcap_{T^{*} \in \mathcal{F}_A} T^{*}\) and let \(P_{u, v}\) be the path joining \(u\) and \(v\) in \(T\). Then \(u\) and \(v\) belong to \(V(T^{*})\) for every \(T^{*} \in \mathcal{F}_A\). Using Lemma~\ref{l9.14} we obtain \(P_{u, v} \subseteq T^{*}\) for every \(T^{*} \in \mathcal{F}_A\). From~\eqref{e3.4} with \(\mathcal{F} = \mathcal{F}_A\) it follows that the path \(P_{u, v}\) is also a subgraph of \(\bigcap_{T^{*} \in \mathcal{F}_A} T^{*}\). Thus, \(\bigcap_{T^{*} \in \mathcal{F}_A} T^{*}\) is connected as required.
\end{proof}

\begin{example}\label{ex3.7}
Let \(T\) be an infinite tree, let a ray \(R\),
\[
V(R) = \{r_1, r_2, \ldots, r_n, r_{n+1}, \ldots\}, \quad E(R) = \{\{r_1, r_2\}, \ldots, \{r_n, r_{n+1}\}, \ldots\},
\]
be a subgraph of \(T\), and \(v\) be a vertex of \(T\) such that \(v \notin V(R)\). We claim that there is a unique \(n(v) \in \NN\) such that \(r_{n(v)}\) is the only common vertex of \(R\) and of the path \(P_{r_{n(v)}, v}\) joining \(r_{n(v)}\) and \(v\) in \(T\),
\begin{equation}\label{ex3.7:e1}
\{r_{n(v)}\} = V(R) \cap V(P_{r_{n(v)}, v}).
\end{equation}
We first prove the existence of some \(n(v) \in \NN\) satisfying \eqref{ex3.7:e1} and show that the graph \(R \cup P_{r_{n(v)}, v}\) is the hull in \(T\) of the set \(A \stackrel{\text{def}}{=} V(R) \cup \{v\}\),
\begin{equation*}
H_A = R \cup P_{r_{n(v)}, v}.
\end{equation*}
Let \(v \in V(T) \setminus V(R)\) be fixed. To find \(n(v) \in \NN\) satisfying \eqref{ex3.7:e1} it suffices to consider an arbitrary \(u \in V(R)\) and the path \((u^1, \ldots, u^m) \subseteq T\) with \(u^1 = u\) and \(u^m = v\). Since \(u \in V(R)\) and \(v \notin V(R)\) hold, there is \(m_1 \in \{1, \ldots, m-1\}\) such that \(u^{m_1} \in V(R)\) and \(u^{j} \notin V(R)\) whenever \(j \in \{m_1+1, \ldots, m-1\}\). Consequently, there is \(n_1 \in \NN\) such that \(r_{n_1} = u^{m_1}\). If we set \(n(v) \stackrel{\text{def}}{=} n_1\), then \eqref{ex3.7:e1} holds with \(P_{r_{n(v)}, v} \stackrel{\text{def}}{=} (u^{m_1}, \ldots, u^{m})\). Since \(P_{r_{n(v)}, v}\) and \(R\) are connected and have the common vertex \(r_{n(v)}\), the union \(R \cup P_{r_{n(v)}, v}\) is a subtree of \(T\). It is also clear that
\[
A \subseteq V(R \cup P_{r_{n(v)}, v})
\]
holds. Now Definition~\ref{d3.6} implies that \(H_A\) is a subtree of \(R \cup P_{r_{n(v)}, v}\). If we have 
\[
H_A \neq R \cup P_{r_{n(v)}, v},
\]
then there is \(j \in \{m_1+1, \ldots, m-1\}\) such that \(u^{j} \notin V(H_A)\). Lemma~\ref{l9.14} and \(u^{j} \notin V(H_A)\) imply that \(H_A\) is disconnected, contrary to Definition~\ref{d3.6}. From~Proposition~\ref{p3.7} it follows that the hull \(H_A\) is unique. Consequently, the number \(n(v) \in \NN\) satisfying \eqref{ex3.7:e1} is also unique. 

In what follows we will say that \(R \cup P_{r_{n(v)}, v}\) is a \emph{comb} in \(T\), \(v\) is a \emph{tooth} of this comb, and \(r_{n(v)}\) is the \emph{root} of the tooth \(v\) (see Figure~\ref{fig2}).
\end{example}

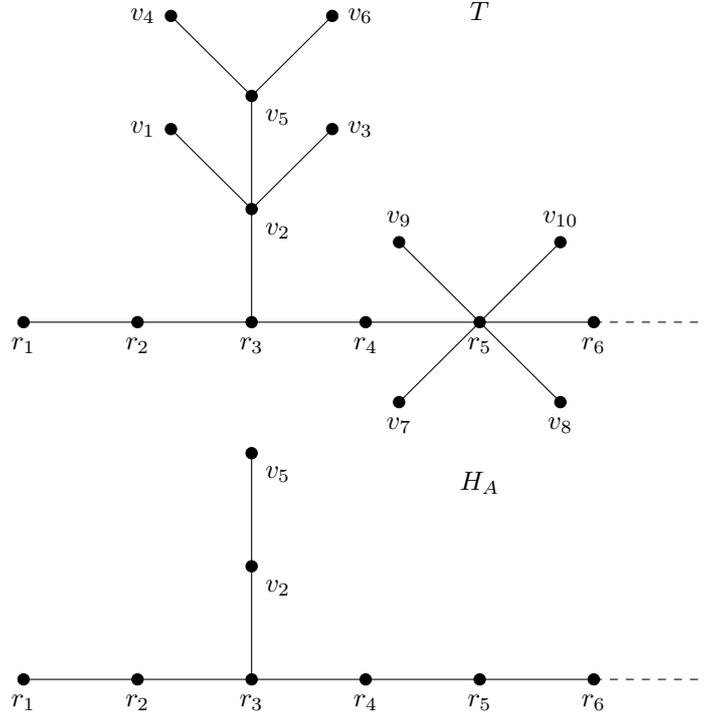
\begin{figure}[ht]
\begin{center}
\begin{tikzpicture}[
level 1/.style={level distance=\levdist,sibling distance=24mm},
level 2/.style={level distance=\levdist,sibling distance=12mm},
level 3/.style={level distance=\levdist,sibling distance=6mm},
solid node/.style={circle,draw,inner sep=1.5,fill=black},
micro node/.style={circle,draw,inner sep=0.5,fill=black}]

\def\dx{1.5cm}
\node at (5*\dx, 3*\dx) [label= below:\(T\)] {};
\draw (\dx, 0) -- (6*\dx, 0);
\draw [dashed] (6*\dx, 0)  --  (7*\dx, 0);
\node [solid node] at (\dx, 0) [label= below:\(r_1\)] {};
\node [solid node] at (2*\dx, 0) [label= below:\(r_2\)] {};
\node [solid node] at (3*\dx, 0) [label= below:\(r_3\)] {};
\node [solid node] at (4*\dx, 0) [label= below:\(r_4\)] {};
\node [solid node] at (5*\dx, 0) [label= below:\(r_5\)] {};
\node [solid node] at (6*\dx, 0) [label= below:\(r_6\)] {};

\draw (3*\dx, 0) -- (3*\dx, 2*\dx);
\node [solid node] at (3*\dx, \dx) [label= below right:\(v_2\)] {};
\node [solid node] at (3*\dx, 2*\dx) [label= below right:\(v_5\)] {};

\draw (3*\dx, \dx) -- ($(3*\dx, \dx) + (135:\dx)$) node [solid node, label= left:\(v_1\)] {};
\draw (3*\dx, \dx) -- ($(3*\dx, \dx) + (45:\dx)$) node [solid node, label= right:\(v_3\)] {};
\draw (3*\dx, 2*\dx) -- ($(3*\dx, 2*\dx) + (135:\dx)$) node [solid node, label= left:\(v_4\)] {};
\draw (3*\dx, 2*\dx) -- ($(3*\dx, 2*\dx) + (45:\dx)$) node [solid node, label= right:\(v_6\)] {};

\draw (5*\dx, 0) -- ($(5*\dx, 0) + (135:\dx)$) node [solid node, label= above:\(v_9\)] {};
\draw (5*\dx, 0) -- ($(5*\dx, 0) + (45:\dx)$) node [solid node, label= above:\(v_{10}\)] {};
\draw (5*\dx, 0) -- ($(5*\dx, 0) + (-135:\dx)$) node [solid node, label= below:\(v_7\)] {};
\draw (5*\dx, 0) -- ($(5*\dx, 0) + (-45:\dx)$) node [solid node, label= below:\(v_{8}\)] {};
\end{tikzpicture}

\begin{tikzpicture}[
level 1/.style={level distance=\levdist,sibling distance=24mm},
level 2/.style={level distance=\levdist,sibling distance=12mm},
level 3/.style={level distance=\levdist,sibling distance=6mm},
solid node/.style={circle,draw,inner sep=1.5,fill=black},
micro node/.style={circle,draw,inner sep=0.5,fill=black}]

\def\dx{1.5cm}
\node at (5*\dx, 2*\dx) [label= below:\(H_A\)] {};
\draw (\dx, 0) -- (6*\dx, 0);
\draw [dashed] (6*\dx, 0)  --  (7*\dx, 0);
\node [solid node] at (\dx, 0) [label= below:\(r_1\)] {};
\node [solid node] at (2*\dx, 0) [label= below:\(r_2\)] {};
\node [solid node] at (3*\dx, 0) [label= below:\(r_3\)] {};
\node [solid node] at (4*\dx, 0) [label= below:\(r_4\)] {};
\node [solid node] at (5*\dx, 0) [label= below:\(r_5\)] {};
\node [solid node] at (6*\dx, 0) [label= below:\(r_6\)] {};

\draw (3*\dx, 0) -- (3*\dx, 2*\dx);
\node [solid node] at (3*\dx, \dx) [label= below right:\(v_2\)] {};
\node [solid node] at (3*\dx, 2*\dx) [label= below right:\(v_5\)] {};
\end{tikzpicture}
\end{center}
\caption{The hull \(H_A\) of \(A = \{v_5\} \cup \{r_i \colon i \in \NN\}\) is comb in \(T\), the vertex \(r_3\) is the root of the tooth \(v_5\) in this comb.} 
\label{fig2}
\end{figure}

\begin{remark}\label{r3.8}
Thus, we always assume that each comb has exactly one tooth with exactly one root. The combs with a large number of teeth are more often used in theory of ultrametric spaces and graph theory (see, for example, the Comb representation of compact ultrametric spaces \cite{LamBr2017} or the Star-Comb Lemma~\cite[Lemma~8.2.2]{Diestel2017}).
\end{remark}

Let us recall the concept of labeled trees.

\begin{definition}\label{d2.4}
A labeled tree is a pair \((T, l)\), where \(T\) is a tree and \(l\) is a mapping defined on the set \(V(T)\).

We say that \(T\) is a \emph{free tree} corresponding to \((T, l)\) and write \(T = T(l)\) instead of \((T, l)\). In what follows, we will consider only the nonnegative real-valued labelings \(l\colon V(T)\to \RR^{+}\).
\end{definition}

Before introducing into consideration the concept of isomorphism of labeled trees, it is useful to remind the definition of isomorphism for free trees.

\begin{definition}\label{d8.7}
Let $T_1$ and $T_2$ be trees. A bijection $f\colon V(T_1)\to V(T_2)$ is an \emph{isomorphism} of $T_1$ and $T_2$ if
\begin{equation*}
(\{u,v\} \in E(T_1)) \Leftrightarrow (\{f(u),f(v)\} \in E(T_2))
\end{equation*}
is valid for all $u$, $v \in V(T_1)$. Two trees are \emph{isomorphic} if there exists an isomorphism of these trees.
\end{definition}

For the case of labeled trees Definition~\ref{d8.7} must be modified as follows.

\begin{definition}\label{d2.5}
Let \(T_i=T_i(l_i)\) be a labeled tree, \(i = 1\), \(2\). A mapping $f\colon V(T_1) \to V(T_2)$ is an isomorphism of \(T_1(l_1)\) and \(T_2(l_2)\) if it is an isomorphism of the free trees $T_1$ and $T_2$ and the equality
\begin{equation*}
l_2(f(v))=l_1(v)
\end{equation*}
holds for every $v \in V(T_1)$.
\end{definition}

\section{Ultrametrics generated by labeled trees}\label{sec4}

Let \(T = T(l)\) be a labeled tree. Following~\cite{Dov2020TaAoG}, we define a mapping \(d_l \colon V(T) \times V(T) \to \RR^{+}\) as
\begin{equation}\label{e11.3}
d_l(u, v) = \begin{cases}
0 & \text{if } u = v,\\
\max\limits_{v^{*} \in V(P)} l(v^{*}) & \text{if } u \neq v,
\end{cases}
\end{equation}
where \(P\) is the path joining \(u\) and \(v\) in \(T\).

\begin{remark}\label{r11.10}
The correctness of this definition follows from Lemma~\ref{l9.14}.
\end{remark}

To formulate the first theorem of this section we recall the concept of \emph{pseudoultrametric space}. Let \(X\) be a set. A symmetric mapping \(d \colon X \times X \to \RR^{+}\) is a \emph{pseudoultrametric} on \(X\) if
\[
d(x, x) = 0 \quad \text{and} \quad d(x, y) \leqslant \max\{d(x, z), d(z, y)\}
\]
hold for all \(x\), \(y\), \(z \in X\). Every ultrametric is a pseudoultrametric, but a pseudoultrametric \(d\) on a set \(X\) is an ultrametric if and only if \(d(x, y) > 0\) holds for all distinct \(x\), \(y \in X\).

The notion of isometries can be extended on pseudoultrametrics as follows. If \((X, d)\) and \((Y, \rho)\) are pseudoultrametric spaces, then a mapping \(\Phi \colon X \to Y\) is an isometry of \((X, d)\) and \((Y, \rho)\) if \(\Phi\) is bijective and
\[
d(x, y) = \rho(\Phi(x), \Phi(y))
\]
holds for all \(x\), \(y \in X\).

\begin{theorem}\label{t11.9}
Let \(T = T(l)\) be a labeled tree. Then \((V(T), d_l)\) is a pseudoultrametric space. The function \(d_l\) is an ultrametric on \(V(T)\) if and only if the inequality
\begin{equation*}
\max\{l(u_1), l(v_1)\} > 0
\end{equation*}
holds for every \(\{u_1, v_1\} \in E(T)\).
\end{theorem}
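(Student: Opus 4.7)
The plan is to verify the three defining properties of a pseudoultrametric, then deduce the ultrametric criterion.

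First I would dispense with the easy properties. Symmetry of $d_l$ is immediate because the path joining $u$ and $v$ is the same as the one joining $v$ and $u$ (Lemma~\ref{l9.14}), and $d_l(x,x)=0$ holds by definition. So the only nontrivial part of the first claim is the strong triangle inequality
\[
d_l(x,y)\leq \max\{d_l(x,z),d_l(z,y)\}
\]
for all $x,y,z\in V(T)$. If any two of $x,y,z$ coincide this is trivial, so assume they are pairwise distinct.

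The key lemma I would establish is the inclusion
\[
V(P_{x,y}) \subseteq V(P_{x,z}) \cup V(P_{z,y}),
\]
where $P_{a,b}$ denotes the unique path in $T$ between $a$ and $b$ (Lemma~\ref{l9.14}). To prove this, I would note that $P_{x,z}\cup P_{z,y}$ is connected (the two paths share the vertex $z$), and as a subgraph of the acyclic $T$ it is acyclic, hence itself a tree. Applying Lemma~\ref{l9.14} inside this subtree yields a path from $x$ to $y$ contained in $P_{x,z}\cup P_{z,y}$; but this path is also a path of $T$, so by uniqueness it coincides with $P_{x,y}$. The strong triangle inequality is then immediate from \eqref{e11.3}: taking the maximum of $l$ over a subset of $V(P_{x,z})\cup V(P_{z,y})$ cannot exceed the larger of the maxima over $V(P_{x,z})$ and over $V(P_{z,y})$.

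For the second assertion, recall that a pseudoultrametric is an ultrametric precisely when $d_l(u,v)>0$ for all distinct $u,v$. For the sufficiency direction, assume that $\max\{l(u_1),l(v_1)\}>0$ for every edge $\{u_1,v_1\}\in E(T)$, and let $u\neq v$ be arbitrary vertices. The path $P_{u,v}$ contains at least one edge $\{u_1,v_1\}$, and both endpoints of that edge lie on $P_{u,v}$, so
\[
d_l(u,v)=\max_{v^{*}\in V(P_{u,v})} l(v^{*}) \geqslant \max\{l(u_1),l(v_1)\} > 0.
\]
For the necessity direction, suppose $d_l$ is an ultrametric and fix any edge $\{u_1,v_1\}\in E(T)$. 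Since $u_1\neq v_1$, the path $P_{u_1,v_1}=(u_1,v_1)$ gives $d_l(u_1,v_1)=\max\{l(u_1),l(v_1)\}$, and this quantity is strictly positive because $d_l$ separates distinct points. The only mild obstacle I anticipate is articulating cleanly the vertex-set inclusion for the three paths in $T$; the rest is a short verification from the definition of $d_l$.
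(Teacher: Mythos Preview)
Your argument is correct. The paper itself does not give a self-contained proof of this theorem; it only remarks that a proof ``can be obtained by simple modification of the proof of Proposition~3.2 \cite{Dov2020TaAoG}.'' Your proposal therefore supplies what the paper omits, and the route you take---showing \(V(P_{x,y}) \subseteq V(P_{x,z}) \cup V(P_{z,y})\) by observing that \(P_{x,z} \cup P_{z,y}\) is a subtree of \(T\) containing both \(x\) and \(y\), then invoking uniqueness of paths (Lemma~\ref{l9.14})---is the natural one and presumably close to what the cited reference does. The treatment of the ultrametric criterion via edges is likewise the obvious and correct reduction.
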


A proof of Theorem~\ref{t11.9} can be obtained by simple modification of the proof of Proposition~3.2 \cite{Dov2020TaAoG}.

\begin{proposition}\label{l11.12}
Let \(T_1 = T_1(l_1)\) and \(T_2 = T_2(l_2)\) be labeled trees and let \(f \colon V(T_1) \to V(T_2)\) be an isomorphism of these trees. Then the equality
\[
d_{l_1}(u, v) = d_{l_2}(f(u), f(v))
\]
holds for all \(u\), \(v \in V(T_1)\).
\end{proposition}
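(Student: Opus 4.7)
The plan is to reduce the equality of pseudoultrametric values to the label-preserving property of $f$ by tracking what $f$ does to the unique path joining two vertices.

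First I would dispose of the trivial case $u = v$: then $f(u) = f(v)$ and both sides of the claimed equality are $0$ by definition \eqref{e11.3}. For the main case $u \neq v$, let $P = (x_1, \ldots, x_k)$ be the unique (by Lemma~\ref{l9.14}) path in $T_1$ joining $u = x_1$ and $v = x_k$. The key intermediate step is to verify that the sequence $(f(x_1), \ldots, f(x_k))$ is the unique path in $T_2$ joining $f(u)$ and $f(v)$. Injectivity of $f$ ensures the vertices $f(x_1), \ldots, f(x_k)$ are pairwise distinct, and the biconditional defining an isomorphism of free trees (Definition~\ref{d8.7}) gives $\{f(x_i), f(x_{i+1})\} \in E(T_2)$ for each $i \in \{1, \ldots, k-1\}$, so this sequence is indeed a path in $T_2$; by the uniqueness clause of Lemma~\ref{l9.14} applied in $T_2$, it coincides with the path $P'$ joining $f(u)$ and $f(v)$.

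Once this is established, the computation is immediate from the label-compatibility condition $l_2(f(v)) = l_1(v)$ of Definition~\ref{d2.5}:
\[
d_{l_2}(f(u), f(v)) = \max_{1 \leqslant i \leqslant k} l_2(f(x_i)) = \max_{1 \leqslant i \leqslant k} l_1(x_i) = d_{l_1}(u, v),
\]
where the outer equalities use \eqref{e11.3} applied in $T_2$ and $T_1$ respectively, and the middle equality is term-by-term label preservation.

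The only point that requires any care is the verification that $f$ actually sends the path $P$ to a path in $T_2$ rather than to some subgraph that merely contains $f(u)$ and $f(v)$. This is not really an obstacle — it follows directly from the bidirectional edge condition in Definition~\ref{d8.7} combined with injectivity — but it is the one place where one must invoke the tree-isomorphism structure, as opposed to merely the existence of a label-preserving bijection. No induction or case analysis on the structure of $T_1$ or $T_2$ is needed, and the argument is valid whether or not the trees are finite or locally finite.
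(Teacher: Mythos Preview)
Your argument is correct and follows essentially the same route as the paper: the paper's proof also observes that if \((v_1,\ldots,v_n)\) is the path joining distinct \(u\) and \(v\) in \(T_1\), then \((f(v_1),\ldots,f(v_n))\) is the path joining \(f(u)\) and \(f(v)\) in \(T_2\), and then uses the label-preservation of Definition~\ref{d2.5} together with formula~\eqref{e11.3} to conclude. Your write-up simply makes explicit the verification (via Definition~\ref{d8.7} and injectivity of \(f\)) that the image sequence is indeed a path, which the paper states without further comment.
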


\begin{proof}
It directly follows from Definition~\ref{d2.5} and formula~\eqref{e11.3}, because if \((v_1, \ldots, v_n)\) is a path joining some distinct \(v = v_1\) and \(u = v_n\) in \(T_1\), then \(f(u) \neq f(v)\) holds and \((f(v_1), \ldots, f(v_n))\) is a path joining \(f(v)\) and \(f(u)\) in \(T_2\) and we have the equality 
\[
\{l_1(v_1), \ldots, l_1(v_n)\} = \{l_2(f(v_1)), \ldots, l_2(f(v_n))\}.
\]
\end{proof}

\begin{corollary}\label{c11.6}
Let \(T_1 = T_1(l_1)\) and \(T_2 = T_2(l_2)\) be isomorphic labeled trees. Then the pseudoultrametric spaces \((V(T_1), d_{l_1})\) and \((V(T_2), d_{l_2})\) are isometric.
\end{corollary}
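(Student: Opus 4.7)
The plan is to exhibit the isomorphism $f$ of labeled trees itself as the required isometry; essentially no work remains beyond checking that $f$ satisfies the definition of an isometry of pseudoultrametric spaces.

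First, I would recall what is given. Since $T_1(l_1)$ and $T_2(l_2)$ are isomorphic as labeled trees, Definition~\ref{d2.5} guarantees the existence of a map $f\colon V(T_1)\to V(T_2)$ which is an isomorphism of the underlying free trees and satisfies $l_2(f(v)) = l_1(v)$ for every $v\in V(T_1)$. By Definition~\ref{d8.7}, such an $f$ is bijective as a map of vertex sets.

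Next, I would appeal directly to Proposition~\ref{l11.12}: the very hypotheses used there are the ones we have, so for all $u$, $v \in V(T_1)$ the equality
\[
d_{l_1}(u,v) = d_{l_2}(f(u), f(v))
\]
holds. Combined with bijectivity of $f$, this is exactly the definition of an isometry of pseudoultrametric spaces $(V(T_1), d_{l_1})$ and $(V(T_2), d_{l_2})$ as extended in the paragraph preceding Theorem~\ref{t11.9}. Therefore these pseudoultrametric spaces are isometric.

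There is no genuine obstacle: the corollary is an immediate repackaging of Proposition~\ref{l11.12}. The only point that deserves to be named explicitly in writing is that an isomorphism of labeled trees is, by definition, bijective on vertex sets, so that the distance-preserving map supplied by Proposition~\ref{l11.12} is automatically an isometry in the sense required.
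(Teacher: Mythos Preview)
Your proposal is correct and matches the paper's approach exactly: the paper states Corollary~\ref{c11.6} without proof immediately after Proposition~\ref{l11.12}, treating it as an immediate consequence, and your argument simply makes explicit the obvious inference---that the labeled-tree isomorphism \(f\) is bijective and distance-preserving by Proposition~\ref{l11.12}, hence an isometry.
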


The converse statement is not valid in general. The following example is a modification of Example~3.1 \cite{Dov2020TaAoG}.

\begin{example}\label{ex11.13}
Let \(V = \{v_0, v_1, v_2, v_3, v_4\}\) be a five-point set, and let \(S = S(l_S)\) and \(P = P(l_P)\) be a labeled star with the center \(v_0\) and, respectively, a labeled path such that \(V(S) = V(P) = V\), \(l_S(v_0) = l_P(v_0) = 1\) and
\[
l_S(v_i) + 1 = l_P(v_i) = 1
\]
for \(i = 1\), \(\ldots\), \(4\) (see Figure~\ref{fig11}). Then, for all distinct \(u\), \(v \in V\), we have
\[
d_{l_P}(u, v) = d_{l_S}(u, v) = 1.
\]
Thus, the ultrametric spaces \((V, d_{l_P})\) and \((V, d_{l_S})\) coincide, but \(P(l_P)\) and \(S(l_S)\) are not isomorphic as labeled trees or even as free trees.

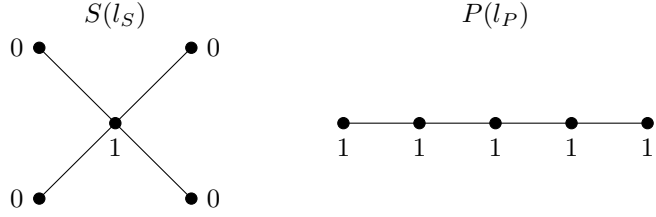
\begin{figure}[ht]
\begin{center}
\begin{tikzpicture}[
level 1/.style={level distance=\levdist,sibling distance=24mm},
level 2/.style={level distance=\levdist,sibling distance=12mm},
level 3/.style={level distance=\levdist,sibling distance=6mm},
solid node/.style={circle,draw,inner sep=1.5,fill=black},
micro node/.style={circle,draw,inner sep=0.5,fill=black}]

\def\dx{1cm}
\node at (0, \dx) [label= above:\(S(l_S)\)] {};
\node [solid node] at (0, 0) [label= below:\(1\)] {};
\node [solid node] at (\dx, \dx) [label= right:\(0\)] {};
\node [solid node] at (\dx, -\dx) [label= right:\(0\)] {};
\node [solid node] at (-\dx, \dx) [label= left:\(0\)] {};
\node [solid node] at (-\dx, -\dx) [label= left:\(0\)] {};
\draw (0, 0) -- (\dx, \dx);
\draw (0, 0) -- (\dx, -\dx);
\draw (0, 0) -- (-\dx, \dx);
\draw (0, 0) -- (-\dx, -\dx);

\node at (5*\dx, \dx) [label= above:\(P(l_P)\)] {};
\node [solid node] at (3*\dx, 0) [label= below:\(1\)] {};
\node [solid node] at (4*\dx, 0) [label= below:\(1\)] {};
\node [solid node] at (5*\dx, 0) [label= below:\(1\)] {};
\node [solid node] at (6*\dx, 0) [label= below:\(1\)] {};
\node [solid node] at (7*\dx, 0) [label= below:\(1\)] {};
\draw (3*\dx, 0) -- (7*\dx, 0);
\end{tikzpicture}
\end{center}
\caption{The star \(S\) and the path \(P\) are not isomorphic as trees, but the labelings \(l_S\) and \(l_P\) generate the same ultrametric on \(V\).}
\label{fig11}
\end{figure}
\end{example}

Example~\ref{ex11.13} shows that the properties of ultrametric spaces generated by different labeled trees can be the same. Thus, the following problem naturally arises.

\begin{problem}\label{pr4.6}
Let \(\mathcal{UP}\) be the class of ultrametric spaces with a given property \(\mathcal{P}\). What common properties do the labeled trees \(T = T(l)\) generating \((V(T), d_l) \in \mathcal{UP}\) have?
\end{problem}

Below we will consider this problem in the following cases:
\begin{itemize}
\item  \(\mathcal{P} =\) completeness,
\item \(\mathcal{P} =\) discreteness,
\item \(\mathcal{P} =\) total boundedness,
\item \(\mathcal{P} =\) discreteness + total boundedness,
\item \(\mathcal{P} =\) compactness,
\end{itemize}
and in each of these cases we find the corresponding characteristic properties of generating labeled trees.

Let us start from the completeness. 

In what follows we shall say that a labeling \(l \colon V(T) \to \mathbb{R}^{+}\) is \emph{non-degenerate} if the inequality
\[
\max\{l(u), l(v)\} > 0
\]
holds for every \(\{u, v\} \in E(T)\). 

\begin{lemma}\label{l4.7}
Let \(R\) be a ray, \(V(R) = \{v_1, v_2, \ldots, v_n, v_{n+1}, \ldots\}\),
\begin{equation}\label{l4.7:e1}
E(R) = \{\{v_1, v_2\}, \ldots, \{v_n, v_{n+1}\}, \ldots\},
\end{equation}
and let \(l \colon V(R) \to \RR^{+}\) be a non-degenerate labeling. The sequence \((v_n)_{n \in \NN}\) is a Cauchy sequence in the ultrametric space \((V(R), d_l)\) if and only if the limit relation
\begin{equation}\label{l4.7:e2}
\lim_{n \to \infty} l(v_n) = 0
\end{equation}
holds.
\end{lemma}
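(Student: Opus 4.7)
The plan is to reduce the statement to Proposition~\ref{p4.5}, which characterizes Cauchy sequences in ultrametric spaces via distances between consecutive terms, and then compute those distances directly from the definition~\eqref{e11.3}.

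First I would note that, since $l$ is non-degenerate, Theorem~\ref{t11.9} guarantees that $(V(R), d_l)$ is genuinely an ultrametric space, so Proposition~\ref{p4.5} applies: the sequence $(v_n)_{n \in \NN}$ is Cauchy if and only if
\[
\lim_{n \to \infty} d_l(v_n, v_{n+1}) = 0.
\]

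Next I would evaluate $d_l(v_n, v_{n+1})$. Because $\{v_n, v_{n+1}\} \in E(R)$ by \eqref{l4.7:e1}, the two-vertex graph $(v_n, v_{n+1})$ is a path joining $v_n$ and $v_{n+1}$ in $R$; by Lemma~\ref{l9.14}, this is the unique such path. Definition~\eqref{e11.3} then yields
\[
d_l(v_n, v_{n+1}) = \max\{l(v_n), l(v_{n+1})\}.
\]

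Combining these observations, $(v_n)_{n \in \NN}$ is Cauchy in $(V(R), d_l)$ iff $\lim_{n \to \infty} \max\{l(v_n), l(v_{n+1})\} = 0$. The forward direction of the lemma (from~\eqref{l4.7:e2} to the Cauchy property) is then immediate since the max of two null sequences is null. For the converse, I would use $l(v_n) \leqslant \max\{l(v_n), l(v_{n+1})\}$ together with the nonnegativity of $l$ to conclude $\lim_{n \to \infty} l(v_n) = 0$. There is no real obstacle here; the only point requiring attention is invoking the correct hypotheses (non-degeneracy of $l$ to ensure $d_l$ is an ultrametric, so that Proposition~\ref{p4.5} is applicable).
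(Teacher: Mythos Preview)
Your proposal is correct and follows essentially the same approach as the paper's proof: both reduce to Proposition~\ref{p4.5}, compute \(d_l(v_n,v_{n+1})=\max\{l(v_n),l(v_{n+1})\}\) from~\eqref{e11.3}, and then verify the equivalence of \(\lim_n \max\{l(v_n),l(v_{n+1})\}=0\) with \(\lim_n l(v_n)=0\). The only cosmetic difference is that the paper phrases the last step via the equality \(\limsup_n l(v_n)=\limsup_n \max\{l(v_n),l(v_{n+1})\}\), whereas you use the squeeze \(0\leqslant l(v_n)\leqslant \max\{l(v_n),l(v_{n+1})\}\); these are equivalent formulations of the same elementary fact.
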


\begin{proof}
By Proposition~\ref{p4.5}, \((v_n)_{n \in \NN}\) is a Cauchy sequence in \((V(R), d_l)\) if and only if
\begin{equation}\label{l4.7:e3}
\lim_{n \to \infty} d_l(v_n, v_{n+1}) = 0.
\end{equation}
Using \eqref{e11.3} and \eqref{l4.7:e1} we can rewrite \eqref{l4.7:e3} as
\[
\lim_{n \to \infty} \left(\max\{l(v_n), l(v_{n+1})\}\right) = 0.
\]
Since all \(l(v_n)\) belong to \(\RR^{+}\), \eqref{l4.7:e2} holds if and only if 
\[
\limsup_{n \to \infty} l(v_n) = 0.
\]
Similarly, \eqref{l4.7:e3} is equivalent to
\[
\limsup_{n \to \infty} (\max\{l(v_n), l(v_{n+1})\}) = 0.
\]
Now using the equality
\[
\limsup_{n \to \infty} l(v_n) = \limsup_{n \to \infty} (\max\{l(v_n), l(v_{n+1})\})
\]
we see that \eqref{l4.7:e2} and \eqref{l4.7:e3} are equivalent.
\end{proof}

The next lemma will be useful to prove Theorem~\ref{t4.7}.

\begin{lemma}\label{l4.8}
Let \(R = R(l)\) be a labeled ray,
\[
V(R) = \{v_1, v_2, \ldots, v_n, v_{n+1}, \ldots\}, \quad E(R) = \{\{v_1, v_2\}, \ldots, \{v_n, v_{n+1}\}, \ldots\},
\]
with non-degenerate labeling. Then the sequence \((v_n)_{n \in \NN}\) is a Cauchy sequence in \((V(R), d_l)\) if and only if \((v_n)_{n \in \NN}\) contains a subsequence which is Cauchy in \((V(R), d_l)\).
\end{lemma}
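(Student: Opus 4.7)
The forward direction is immediate: any sequence is a subsequence of itself, so if $(v_n)_{n\in\NN}$ is Cauchy then it trivially contains a Cauchy subsequence. All of the content is in the reverse implication, and the plan is to reduce it to Lemma~\ref{l4.7}, which characterizes the Cauchy property for the whole sequence $(v_n)_{n\in\NN}$ via $\lim_{n\to\infty} l(v_n) = 0$.

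So suppose $(v_{n_k})_{k\in\NN}$ is a Cauchy subsequence of $(v_n)_{n\in\NN}$ in $(V(R), d_l)$, with $n_1 < n_2 < \cdots$. By Proposition~\ref{p4.5} applied to this subsequence in the ultrametric space $(V(R), d_l)$,
\[
\lim_{k \to \infty} d_l(v_{n_k}, v_{n_{k+1}}) = 0.
\]
The essential observation is that since $R$ is a ray with edges \eqref{l4.7:e1}, the unique path in $R$ joining $v_{n_k}$ and $v_{n_{k+1}}$ is $(v_{n_k}, v_{n_k + 1}, \ldots, v_{n_{k+1}})$. Hence, by the defining formula~\eqref{e11.3},
\[
d_l(v_{n_k}, v_{n_{k+1}}) = \max\bigl\{l(v_j) \colon n_k \leqslant j \leqslant n_{k+1}\bigr\}.
\]

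Now I would use this to show $\lim_{n\to\infty} l(v_n) = 0$. For each integer $m \geqslant n_1$, there exists a unique $k = k(m) \in \NN$ with $n_k \leqslant m \leqslant n_{k+1}$, and the displayed equality gives $l(v_m) \leqslant d_l(v_{n_k}, v_{n_{k+1}})$. Because $n_k \to \infty$ as $k \to \infty$, we have $k(m) \to \infty$ as $m \to \infty$, and therefore $l(v_m) \to 0$. Applying Lemma~\ref{l4.7} in the opposite direction, the limit relation $\lim_{n\to\infty} l(v_n) = 0$ implies that $(v_n)_{n \in \NN}$ is a Cauchy sequence in $(V(R), d_l)$, completing the proof.

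The only mildly non-obvious step is the middle one, where one identifies $d_l(v_{n_k}, v_{n_{k+1}})$ with the maximum of the labels of all intermediate vertices of the ray, thereby bounding each individual $l(v_m)$ by one of the terms $d_l(v_{n_k}, v_{n_{k+1}})$ that is known to go to zero. After that, the result is a direct application of Lemma~\ref{l4.7}.
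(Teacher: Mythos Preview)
Your proof is correct and follows essentially the same route as the paper: bound each label $l(v_m)$ by $d_l(v_{n_{k(m)}}, v_{n_{k(m)+1}})$ using the explicit form of the path in the ray, then invoke Proposition~\ref{p4.5} and Lemma~\ref{l4.7}. The only cosmetic difference is that the paper writes $n_{k(m)} \leqslant m < n_{k(m)+1}$ to make $k(m)$ genuinely unique, whereas your non-strict $n_k \leqslant m \leqslant n_{k+1}$ allows two choices at the endpoints; this does not affect the argument.
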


\begin{proof}
If \((v_n)_{n \in \NN}\) is a Cauchy sequence, then \((v_n)_{n \in \NN}\) is a Cauchy subsequence of itself. 

Conversely, let \((v_{n_k})_{k \in \NN}\),
\begin{equation*}
1 \leqslant n_1 < n_2 < \ldots < n_k < n_{k+1} < \ldots,
\end{equation*}
be a Cauchy subsequence of \((v_n)_{n \in \NN}\). It is easy to see that, for every \(m \geqslant n_1\), there is the unique \(k(m) \in \NN\) such that 
\begin{equation}\label{l4.8:e2}
n_{k(m)} \leqslant m < n_{k(m)+1}.
\end{equation}

Let us denote by \(P_{v_{n_{k(m)}}, v_{n_{k(m)+1}}}\) the path joining \(v_{n_{k(m)}}\) and \(v_{n_{k(m)+1}}\) in \(R\). From \eqref{l4.8:e2} it follows that 
\begin{equation}\label{l4.8:e3}
v_m \in V(P_{v_{n_{k(m)}}, v_{n_{k(m)+1}}}).
\end{equation}
Now using \eqref{e11.3} and \eqref{l4.8:e3} we obtain
\begin{equation}\label{l4.8:e4}
l(v_m) \leqslant \max\{l(v) \colon v \in V(P_{v_{n_{k(m)}}, v_{n_{k(m)+1}}})\} = d_l(v_{n_{k(m)}}, v_{n_{k(m)+1}}).
\end{equation}
It is clear that the mapping 
\[
\{n_1, n_1+1, \ldots\} \ni m \mapsto n_{k(m)} \in \NN
\]
is increasing and satisfies the limit relation
\begin{equation}\label{l4.8:e6}
\lim_{m \to \infty} n_{k(m)} = +\infty.
\end{equation}
Proposition~\ref{p4.5}, \eqref{l4.8:e4} and \eqref{l4.8:e6} imply
\[
\limsup_{m \to \infty} l(v_m) \leqslant \limsup_{m \to \infty} d_l(v_{n_{k(m)}}, v_{n_{k(m)+1}}) = 0.
\]
Thus, we have
\begin{equation*}
\lim_{m \to \infty} l(v_m) = 0,
\end{equation*}
because \(l(v_m) \in \RR^{+}\) for every \(n \in \NN\). Using the last limit relation we obtain that \((v_n)_{n \in \NN}\) is a Cauchy sequence by Lemma~\ref{l4.7}.
\end{proof}

\begin{lemma}\label{l4.6}
Let \(T = T(l)\) be a labeled tree with non-degenerate labeling \(l \colon V(T) \to \RR^{+}\) and let \(T_1\) be a subtree of \(T\) having the labeling \(l_1 \colon V(T_1) \to \RR^{+}\) which is the restriction of \(l\) on \(V(T_1)\), \(l_1 = l|_{V(T_1)}\). Then the labeling \(l_1\) is also non-degenerate and the ultrametric \(d_{l_1}\) is the restriction of the ultrametric \(d_l\) on the set \(V(T_1)\), \(d_{l_1} = d_l|_{V(T_1) \times V(T_1)}\).
\end{lemma}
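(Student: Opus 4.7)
The plan is to verify the two claims of the lemma separately: non-degeneracy of $l_1$ and equality of the two (pseudo)ultrametrics on $V(T_1)$.

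For non-degeneracy, I would start by fixing an arbitrary edge $\{u_1, v_1\} \in E(T_1)$. Since $T_1$ is a subtree of $T$, the inclusion $E(T_1) \subseteq E(T)$ yields $\{u_1, v_1\} \in E(T)$. Non-degeneracy of $l$ then gives $\max\{l(u_1), l(v_1)\} > 0$, and since $l_1$ agrees with $l$ on $V(T_1)$, we obtain $\max\{l_1(u_1), l_1(v_1)\} > 0$, so $l_1$ is non-degenerate. In particular, by Theorem~\ref{t11.9}, $(V(T_1), d_{l_1})$ is actually an ultrametric space.

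For the equality $d_{l_1} = d_l|_{V(T_1) \times V(T_1)}$, I would fix $u, v \in V(T_1)$. The case $u = v$ is trivial from~\eqref{e11.3}. For $u \neq v$, the main point is to identify the path joining $u$ and $v$ in $T_1$ with the path joining them in $T$. Let $P_1$ be the path joining $u$ and $v$ inside $T_1$ (it exists since $T_1$ is connected) and let $P$ be the path joining $u$ and $v$ in $T$. Because $T_1 \subseteq T$, the path $P_1$ is also a subgraph of $T$, and it joins $u$ and $v$ in $T$; by the uniqueness part of Lemma~\ref{l9.14}, we must have $P_1 = P$. Consequently $V(P_1) = V(P)$, and since $l_1 = l|_{V(T_1)}$ and $V(P_1) \subseteq V(T_1)$, formula~\eqref{e11.3} gives
\[
d_{l_1}(u, v) = \max_{v^{*} \in V(P_1)} l_1(v^{*}) = \max_{v^{*} \in V(P)} l(v^{*}) = d_l(u, v).
\]

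I do not anticipate any serious obstacle here. The only subtlety worth flagging is the appeal to Lemma~\ref{l9.14} to guarantee that the path in the subtree coincides with the path in the ambient tree; without this uniqueness, different paths could in principle use vertices outside $V(T_1)$ and spoil the equality of maxima. Everything else is a direct substitution from definitions.
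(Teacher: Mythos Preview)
Your proof is correct and follows exactly the approach sketched in the paper, which simply says the result follows from formula~\eqref{e11.3}, Lemma~\ref{l9.14}, and the definitions of trees and subtrees. You have merely spelled out these steps in full detail, including the key use of path uniqueness to identify the path in $T_1$ with the path in $T$.
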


\begin{proof}
It follows from formula \eqref{e11.3}, Lemma~\ref{l9.14} and the definition of trees and subtrees.
\end{proof}

\begin{theorem}\label{t4.7}
Let \(T = T(l)\) be a labeled tree with non-degenerate labeling. Then the following conditions are equivalent:
\begin{enumerate}
\item \label{t4.7:s1} The ultrametric space \((V(T), d_l)\) is complete.
\item \label{t4.7:s2} For every ray \(R \subseteq T\), 
\begin{equation}\label{t4.7:e2}
V(R) = \{x_1, x_2, \ldots, x_n, x_{n+1}, \ldots\}, \quad E(R) = \{\{x_1, x_2\}, \ldots, \{x_{n}, x_{n+1}\}, \ldots\},
\end{equation}
we have the inequality
\begin{equation}\label{t4.7:e1}
\limsup_{n \to \infty} l(x_n) > 0.
\end{equation}
\end{enumerate}
\end{theorem}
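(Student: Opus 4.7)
The plan is to prove both directions. For $(i) \Rightarrow (ii)$, I argue by contrapositive: suppose a ray $R$ in $T$ with $V(R) = \{x_1, x_2, \ldots\}$ satisfies $\limsup_n l(x_n) = 0$; since $l$ is nonnegative this gives $l(x_n) \to 0$. By Lemma~\ref{l4.6}, the restriction $l|_{V(R)}$ is non-degenerate and $d_{l|_{V(R)}}$ is the restriction of $d_l$, so Lemma~\ref{l4.7} makes $(x_n)$ a Cauchy sequence in $(V(R), d_{l|_{V(R)}})$ and therefore in $(V(T), d_l)$. Completeness supplies a limit $a \in V(T)$. In the case $a = x_k$ for some $k$, the path $P_{x_k, x_n} = (x_k, \ldots, x_n)$ gives $d_l(x_n, a) \geq \max_{k \leq j \leq n} l(x_j)$, and letting $n \to \infty$ forces $l(x_j) = 0$ for all $j \geq k$, contradicting non-degeneracy on $\{x_j, x_{j+1}\}$. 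In the case $a \notin V(R)$, the comb description of Example~\ref{ex3.7} supplies $n(a) \in \NN$ such that, for $n \geq n(a)$, the path $P_{x_n, a}$ is $(x_n, \ldots, x_{n(a)})$ concatenated with the fixed path $P_{x_{n(a)}, a}$; again $d_l(x_n, a) \to 0$ forces $l(x_j) = 0$ for all $j \geq n(a)$, and the same contradiction ensues.

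For $(ii) \Rightarrow (i)$, let $(y_n)$ be Cauchy in $(V(T), d_l)$. If $(y_n)$ takes only finitely many distinct values, the finite set of positive pairwise distances together with $d_l(y_n, y_{n+1}) \to 0$ forces eventual constancy. Otherwise, extract a Cauchy subsequence of pairwise distinct terms; Proposition~\ref{p2.7} reduces the task to showing this subsequence converges, so I relabel it as $(y_n)$. Form the hull $T' = \bigcup_{m \in \NN} P_{y_m, y_{m+1}}$ of $\{y_n : n \in \NN\}$ in $T$; this is an infinite subtree, so by Proposition~\ref{p3.2} it either contains a ray or has a vertex of infinite degree.

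The ray case is ruled out directly: a ray $R = (r_1, r_2, \ldots) \subseteq T'$ has each $r_n$ in some $P_{m(n)}$, and since each $P_m$ is finite while the $r_n$ are distinct, $m(n) \to \infty$; then $l(r_n) \leq d_l(y_{m(n)}, y_{m(n)+1}) \to 0$ by Cauchyness, contradicting~(ii). Hence $T'$ has a vertex $v_1$ of infinite degree, and the goal becomes to show $y_n \to v_1$. As the sequence has distinct values, one may assume no $y_n$ equals $v_1$, so each $y_n$ lies in one of the components $C_{i(n)}$ of $T' \setminus \{v_1\}$. The key observation — resting on uniqueness of paths in trees (Lemma~\ref{l9.14}) — is that whenever $y_m$ and $y_{m+1}$ lie in a common component $C_i$, the path $P_m$ is confined to $C_i$ and uses no edge at $v_1$. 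Thus if some tail $(y_n)_{n \geq N}$ were contained in a single component $C_{i_0}$, only the $N - 1$ paths $P_m$ with $m < N$ could supply edges at $v_1$, giving $v_1$ finite degree in $T'$, a contradiction. So every tail visits at least two components.

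Given $\varepsilon > 0$ and $N$ with $d_l(y_n, y_m) < \varepsilon$ for all $n, m \geq N$, for each $n \geq N$ pick $m \geq N$ with $i(m) \neq i(n)$; the path $P_{y_n, y_m}$ is then forced through $v_1$, yielding the ultrametric identity $d_l(y_n, y_m) = \max(d_l(y_n, v_1), d_l(v_1, y_m))$, so $d_l(y_n, v_1) \leq d_l(y_n, y_m) < \varepsilon$. This proves $y_n \to v_1$. The main obstacle is exactly this last step: identifying the infinite-degree vertex $v_1$ as the limit requires the component decomposition of $T' \setminus \{v_1\}$ together with the ultrametric fact that a path crossing $v_1$ dominates the distance, once the Cauchy sequence is shown not to concentrate in a single component.
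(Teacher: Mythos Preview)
Your proof is correct, and for $(i)\Rightarrow(ii)$ it follows the paper's argument closely. For $(ii)\Rightarrow(i)$ you take a genuinely simpler route. The paper also forms the hull $H_A$ of the range $A$ of the Cauchy sequence and shows it is rayless, but does so through a case split: if a putative ray $R\subseteq H_A$ meets $A$ infinitely often, it invokes Lemma~\ref{l4.8} to upgrade a Cauchy subsequence of $(r_n)$ to Cauchyness of $(r_n)$ itself; if $R$ meets $A$ only finitely often, the paper runs a comb construction to extract an infinite set of teeth $y_{n_k}\in A$ with pairwise distinct roots marching along $R$, bounds $d_l(r(y_{n_{k_1}}),r(y_{n_{k_2}}))\leqslant d_l(y_{n_{k_1}},y_{n_{k_2}})$, and again falls back on Lemma~\ref{l4.8}. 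Your key simplification is the concrete description of the hull as $T'=\bigcup_{m}P_{y_m,y_{m+1}}$: since each $P_m$ is finite and every vertex $v\in V(P_m)$ satisfies $l(v)\leqslant d_l(y_m,y_{m+1})$, any ray in $T'$ has its vertices eventually lying on $P_m$'s with arbitrarily large $m$, forcing $l(r_n)\to 0$ directly---no case split and no appeal to Lemma~\ref{l4.8}. In the final step both proofs locate a vertex $v_1$ of infinite degree in the hull and decompose $T'\setminus\{v_1\}$ into components; the paper argues each component meets $A$ and then builds a convergent subsequence, whereas your observation that confining a tail of $(y_n)$ to a single component would leave only the finitely many paths $P_m$ with $m<N$ to supply edges at $v_1$ yields the same conclusion more directly and lets you bound $d_l(y_n,v_1)$ for every tail index rather than along a subsequence.
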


\begin{proof}
\(\ref{t4.7:s1} \Rightarrow \ref{t4.7:s2}\). Let \((V(T), d_l)\) be a complete ultrametric space. We must show that condition~\ref{t4.7:s2} is satisfied. Suppose contrary that there is a ray \(R \subseteq T\) such that \eqref{t4.7:e2} and \(\limsup_{n \to \infty} l(x_n) = 0\) hold. Since all \(l(x_n)\) are nonnegative, the last equality holds if and only if 
\begin{equation}\label{t4.7:e3}
\lim_{n \to \infty} l(x_n) = 0.
\end{equation}
From \eqref{t4.7:e3} and \eqref{e11.3} it follows that
\[
\lim_{n \to \infty} d_l(x_n, x_{n+1}) = \lim_{n \to \infty} \max\{l(x_n), l(x_{n+1})\} = 0,
\]
because \(x_n\) and \(x_{n+1}\) are adjacent in \(R\) and \(R \subseteq T\). Hence, by Proposition~\ref{p4.5}, the sequence \((x_n)_{n \in \mathbb{N}}\) is a Cauchy sequence in the space \((V(T), d_l)\). 

Now, using condition~\ref{t4.7:s1} and Definition~\ref{d2.6}, we can find a point \(v^{*} \in V(T)\) satisfying the equality
\begin{equation}\label{t4.7:e4}
\lim_{n \to \infty} d_l(x_n, v^{*}) = 0.
\end{equation}
If there is \(n_0 \in \NN\) such that \(v^{*} = x_{n_0} \in V(R)\), then, for every \(n \geqslant n_0 + 1\), the path \(P_{x_{n_0}, x_{n}}\) joining \(v^{*}\) and \(x_n\) in \(T\) contains the edge \(\{x_{n_0}, x_{n_0+1}\} \in E(R)\). Since \(l \colon V(T) \to \RR^{+}\) is non-degenerate, \eqref{e11.3} and \(\{x_{n_0}, x_{n_0+1}\} \in E(P_{x_{n_0}, x_{n_0+1}})\) imply 
\[
d_l(v^{*}, x_n) \geqslant \max \{l(x_{n_0}), l(x_{n_0+1})\} > 0
\]
for every \(n \geqslant n_0 + 1\), contrary to \eqref{t4.7:e4}.

Suppose now that \(v^{*} \in V(T) \setminus V(R)\). Then the hull \(H_A\) of the set
\[
A \stackrel{\text{def}}{=} V(R) \cup \{v^{*}\}
\]
is a comb in \(T\) with the tooth \(v^{*}\) (see Definition~\ref{d3.6} and Example~\ref{ex3.7}). Write \(u^{*}\) for the root of \(v^{*}\) in \(H_A\). Since \(u^{*} \neq v^{*}\) and \(u^{*}\) is the only common vertex of \(R\) and of the path \(P_{u^{*}, v^{*}}\) joining \(u^{*}\) and \(v^{*}\) in \(T\), we have 
\[
d_l(x_n, v^{*}) \geqslant d_l(u^{*}, v^{*}) > 0
\]
for all \(x_n \in V(R)\), contrary to~\eqref{t4.7:e4}. Condition~\ref{t4.7:s2} follows.

\(\ref{t4.7:s2} \Rightarrow \ref{t4.7:s1}\). Let \ref{t4.7:s2} hold. We must show that the ultrametric space \((V(T), d_l)\) is complete. According to Definition~\ref{d2.6}, the space is complete if every Cauchy sequence of points of this space is convergent.

Let us consider an arbitrary Cauchy sequence \((y_n)_{n \in \NN} \subseteq V(T)\) and define the range \(A\) of \((y_n)_{n \in \NN}\) as:
\[
(v \in A) \Leftrightarrow (\exists n \in \NN \colon y_n = v).
\]
If \(A\) is finite, then the sequence \((y_n)_{n \in \NN}\) contains an infinite constant subsequence and, consequently, \((y_n)_{n \in \NN}\) is convergent by Proposition~\ref{p2.7}. 

Suppose that \(A\) is infinite and denote by \(H_A\) the hull of \(A\) in \(T\). Let us prove that \(H_A\) is a rayless graph. 

Indeed, let a ray \(R\),
\[
V(R) = \{r_1, r_2, \ldots, r_n, r_{n+1}, \ldots\}, \quad E(R) = \{\{r_1, r_2\}, \ldots, \{r_n, r_{n+1}\}, \ldots\},
\]
be a subgraph of \(H_A\). If the intersection \(A \cap V(R)\) is infinite, then the sequence \((r_n)_{n \in \NN}\) contains a subsequence \((r_{n_k})_{k \in \NN}\) which is Cauchy in \((V(T), d_l)\) and, consequently, in \((V(R), d_l|_{V(R) \times V(R)})\). Using Lemma~\ref{l4.8} and Lemma~\ref{l4.6} we see that the sequence \((r_n)_{n \in \NN}\) is Cauchy in \((V(R), d_l|_{V(R) \times V(R)})\). Now Lemma~\ref{l4.6} implies that \((r_n)_{n \in \NN}\) is a Cauchy sequence in \((V(T), d_l)\). Moreover, the equality
\begin{equation*}
\lim_{n \to \infty} l(r_n) = 0
\end{equation*}
holds by Lemma~\ref{l4.7}. The last equality contradicts \eqref{t4.7:e1} with \((x_n)_{n \in \NN} = (r_n)_{n \in \NN}\). 

Thus, the intersection \(A \cap V(R)\) is finite and \(A\) is infinite. We claim that there is an \emph{infinite} subset \(A^{*}\) of the set \(A \setminus V(R)\) which satisfies the condition:
\begin{enumerate}
\item[\((i^{*})\)] If \(a_1\), \(a_2\) are distinct points of \(A^{*}\), and
\[
A_1 \stackrel{\text{def}}{=} V(R) \cup \{a_1\}, \quad A_2 \stackrel{\text{def}}{=} V(R) \cup \{a_2\},
\]
and \(H_{A_1}\), \(H_{A_2}\) are the corresponding hulls of \(A_1\) and of \(A_2\) in \(T\), then the roots \(r(a_1)\) and \(r(a_2)\) are distinct.
\end{enumerate}

(Recall that the hulls \(H_{A_1}\) and \(H_{A_2}\) in \(T\) are some combs in \(T\), see Example~\ref{ex3.7}). To find a desired \(A^{*} \subseteq A \setminus V(R)\) let us consider a number \(N \in \NN\) such that 
\[
A \cap V(R) \subseteq \{r_1, r_2, \ldots, r_{N}\}
\]
and suppose that, for every \(a \in A \setminus V(R)\), the root \(r(a)\) of the tooth \(a\) in the comb \(H_{V(R) \cup \{a\}}\) belongs to the set \(\{r_1, r_2, \ldots, r_{N}\}\). The graph 
\[
G_{R, N} \stackrel{\text{def}}{=} (r_1, \ldots, r_{N}) \cup \bigcup_{a \in A \setminus V(R)} P_{a, r(a)},
\]
where \((r_1, \ldots, r_{N})\) is the path joining \(r_1\) and \(r_N\) in \(R\) and \(P_{a, r(a)}\) is the path joining \(a\) and \(r(a)\) in the comb \(H_{V(R) \cup \{a\}}\), is a connected subgraph of \(T\) satisfying the conditions
\begin{equation*}
A \subseteq V(G_{R, N}) \quad \text{and} \quad r_n \notin V(G_{R, N}) 
\end{equation*}
for every \(n \geqslant N+1\). Since \(r_n \in V(H_A)\) holds for every \(n \in \NN\), the inclusion
\[
V(H_A) \subseteq V(G_{R, N})
\]
is false, contrary to Proposition~\ref{p3.7}. Hence, there is an element \(y_{n_1}\) of the sequence \((y_{n})_{n \in \NN}\) such that \(y_{n_1} \in A \setminus V(R)\) and \(r(y_{n_1}) = r_{N_1}\) and \(N_1 > N\). If for every \(a \in A \setminus V(R)\) the root \(r(a)\) belongs to \(\{r_1, \ldots, r_N, \ldots, r_{N_1}\}\), then repeating the above procedure with the graph \(G_{R, N_1}\) gives us \(y_{n_2} \in A \setminus V(R)\) with \(r(y_{n_2}) = r_{N_2}\) such that \(n_2 > n_1\) and \(N_2 > N_1\) and so on. 

Let us consider the sequence \((y_{n_k})_{k \in \NN}\), whose elements are inductively defined above, and write
\[
A^{*} \stackrel{\text{def}}{=} \{y_{n_k} \colon k \in \NN\}.
\]
Then condition~\((i^{*})\) satisfies with this \(A^{*}\). Since \((y_{n})_{n \in \NN}\) is a Cauchy sequence in \((V(T), d_l)\), the sequence \((y_{n_k})_{k \in \NN}\) is also Cauchy. It is easy to prove that the inequality
\begin{equation}\label{t4.7:e12}
d_l(r(y_{n_{k_1}}), r(y_{n_{k_2}})) \leqslant d_l(y_{n_{k_1}}, y_{n_{k_2}})
\end{equation}
holds for all \(k_1\), \(k_2 \in \NN\) (see Figure~\ref{fig3}). Consequently, \((r(y_{n_k}))_{k \in \NN}\) is a Cauchy sequence in \((V(T), d_l)\).

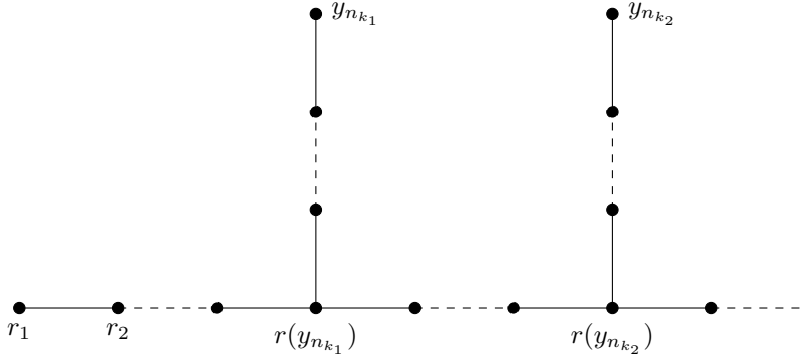
\begin{figure}[ht]
\begin{center}
\begin{tikzpicture}[
level 1/.style={level distance=\levdist,sibling distance=24mm},
level 2/.style={level distance=\levdist,sibling distance=12mm},
level 3/.style={level distance=\levdist,sibling distance=6mm},
solid node/.style={circle,draw,inner sep=1.5,fill=black},
micro node/.style={circle,draw,inner sep=0.5,fill=black}]

\def\dx{1.3cm}
\draw (\dx, 0) node [solid node, label= below:\(r_1\)] {} -- (2*\dx, 0) node [solid node, label= below:\(r_2\)] {};
\draw [dashed] (2*\dx, 0) -- (3*\dx, 0) node [solid node] {};
\draw (3*\dx, 0) -- (4*\dx, 0) node [solid node, label= below:\(r(y_{n_{k_1}})\)] {} -- (5*\dx, 0) node [solid node] {};
\draw (4*\dx, 0) -- (4*\dx, \dx) node [solid node] {};
\draw [dashed] (4*\dx, \dx) -- (4*\dx, 2*\dx) node [solid node] {};
\draw (4*\dx, 2*\dx) -- (4*\dx, 3*\dx) node [solid node, label= right:\(y_{n_{k_1}}\)] {};

\draw [dashed] (5*\dx, 0) -- (6*\dx, 0) node [solid node] {};
\draw (6*\dx, 0) -- (7*\dx, 0) node [solid node, label= below:\(r(y_{n_{k_2}})\)] {} -- (8*\dx, 0) node [solid node] {};
\draw (7*\dx, 0) -- (7*\dx, \dx) node [solid node] {};
\draw [dashed] (7*\dx, \dx) -- (7*\dx, 2*\dx) node [solid node] {};
\draw (7*\dx, 2*\dx) -- (7*\dx, 3*\dx) node [solid node, label= right:\(y_{n_{k_2}}\)] {};

\draw [dashed] (8*\dx, 0) -- (9*\dx, 0);
\end{tikzpicture}
\end{center}
\caption{The path \(\bigl(r(y_{n_{k_1}}), \ldots, r(y_{n_{k_2}})\bigr)\) is a subgraph of the path \(\bigl(y_{n_{k_1}}, \ldots, r(y_{n_{k_1}}), \ldots, r(y_{n_{k_2}}), \ldots, y_{n_{k_2}}\bigr)\). It implies inequality~\eqref{t4.7:e12}.}
\label{fig3}
\end{figure}

Now using Lemma~\ref{l4.8} and Lemma~\ref{l4.6} we can prove that the sequence \((r_n)_{n \in \NN}\) of all vertices of the ray \(R\) is also a Cauchy sequence in \((V(T), d_l)\). Hence, by Lemma~\ref{l4.7}, we have the equality
\[
\lim_{n \to \infty} l(r_n) = 0,
\]
that contradicts condition \ref{t4.7:s2}. 

\emph{Thus, the hull \(H_A\) is rayless}. Since \(A\) is infinite, \(H_A\) has a vertex \(v^{*}\) of infinite degree by Proposition~\ref{p3.2}. To complete the proof it suffices to show that \((y_n)_{n \in \NN}\) converges to the point \(v^{*}\) in \((V(T), d_l)\),
\begin{equation*}
\lim_{n \to \infty} d_l(y_n, v^{*}) = 0.
\end{equation*}

Let us consider the subgraph \(F_{v^{*}}\) obtained from \(H_A\) by deleting the vertex \(v^{*}\),
\[
V(F_{v^{*}}) = V(H_A) \setminus \{v^{*}\}, \quad E(F_{v^{*}}) = \{\{u, v\} \in E(H_A) \colon u \neq v^{*} \neq v\}.
\]
Since \(H_A\) is a tree (as a subtree of \(T\)), \(F_{v^{*}}\) is a forest and, for every connected component \(T'\) of \(F_{v^{*}}\), there is a unique \(p\in V(N(v^{*}))\) such that
\begin{equation}\label{t4.7:e11}
p \in V(T'),
\end{equation}
where \(N(v^{*})\) is the neighborhood of \(v^{*}\) in \(H_A\). We will write \(T^{p}\) for the component \(T'\) if \eqref{t4.7:e11} holds with \(p\in V(N(v^{*}))\).

It is clear that
\begin{equation}\label{t4.7:e13}
H_A = \left(\bigcup_{p \in V(N(v^{*}))} T^{p}\right) \cup S(v^{*})
\end{equation}
holds, where \(S(v^{*})\) is the star with the center \(v^{*}\) and the vertex set 
\[
V(S(v^{*})) = V(N(v^{*})).
\]

We claim that the set \(V(T^p) \cap A\) is nonempty for every \(p \in V(N(v^{*}))\). Indeed, suppose that there is \(p^{*} \in V(N(v^{*}))\) such that
\begin{equation}\label{t4.7:e14}
V(T^{p^{*}}) \cap A = \varnothing.
\end{equation}
Let us denote by \(S_{p^{*}}\) the graph which is obtained from \(S(v^{*})\) by deleting of the vertex \(p^{*}\), i.e., 
\[
V(S_{p^{*}}) = V(S(v^{*})) \setminus \{p^{*}\}
\]
holds and
\[
\bigl(\{x, y\} \in E(S_{p^{*}})\bigr) \Leftrightarrow \bigl(\{x, y\} \in E(S(v^{*})) \text{ and } x \neq p^{*} \neq y\bigr)
\]
is valid for all \(x\), \(y \in V(N(v^{*}))\). Then \(S_{p^{*}}\) is a star with the center \(v^{*}\). From \eqref{t4.7:e14} it follows that the union
\[
\left(\bigcup_{\substack{p \in V(N(v^{*}))\\ p \neq p^{*}}} T^{p} \right) \cup S_{p^{*}}
\]
is a subtree of \(T\) and the set \(A\) is a subset of the vertex set of this subtree. Hence, by Definition~\ref{d3.6}, we have
\[
p^{*} \notin V(H_A),
\]
contrary to \eqref{t4.7:e13}. Using the conditions 
\[
\delta_{H_A} (v^{*}) = \infty \quad \text{and} \quad V(T^{p}) \cap A \neq \varnothing
\]
for every \(p \in V(N(v^{*}))\), we can find a subsequence \((y_{n_m})_{m \in \NN}\) of the sequence \((y_{n})_{n \in \NN}\) such that for every \(m \in \NN\) there is \(p(m) \in V(N(v^{*}))\) satisfying \(y_{n_m} \in T^{p}\), and \(p(m_1) \neq p(m_2)\) whenever \(m_1 \neq m_2\). Then for every pair of distinct \(m_1\), \(m_2 \in \NN\) the path \(P_{y_{n_{m_1}}, y_{n_{m_2}}}\) joining \(y_{n_{m_1}}\) and \(y_{n_{m_2}}\) in \(H_A\) contains the vertex \(v^{*}\). Hence, by definition~\eqref{e11.3}, we have the inequality
\begin{equation}\label{t4.7:e15}
d_l\left(y_{n_{m_1}}, y_{n_{m_2}}\right) \geqslant \max \left\{d_l\left(y_{n_{m_1}}, v^{*}\right), d_l\left(v^{*}, y_{n_{m_2}}\right)\right\}
\end{equation}
whenever \(m_1\), \(m_2 \in \NN\). By Proposition~\ref{p2.7}, the sequence \((y_{n})_{n \in \NN}\) is convergent if \((y_{n_m})_{m \in \NN}\) is convergent. Using Proposition~\ref{p4.5} and inequality \eqref{t4.7:e15} with \(n_{m_1} = n_{m}\) and \(n_{m_2} = n_{m+1}\) we obtain
\[
0 = \lim_{m \to \infty} d_l\left(y_{n_{m}}, y_{n_{m+1}}\right) \geqslant \limsup_{m \to \infty} d_l\left(y_{n_{m}}, v^{*}\right),
\]
that implies 
\[
\lim_{m \to \infty} d_l\left(y_{n_{m}}, v^{*}\right) = 0.
\]
Thus, \((y_{n_m})_{m \in \NN}\) is convergent to \(v^{*}\).
\end{proof}

Condition~\ref{t4.7:s2} of Theorem~\ref{t4.7} is vacuously true for every rayless tree \(T\). Moreover, if \(R \subseteq T\) is a ray satisfying \eqref{t4.7:e2}, then it is easy to construct a non-degenerate labeling \(l \colon V(T) \to \RR^{+}\) such that \eqref{t4.7:e1} does not hold. Thus, Theorem~\ref{t4.7} implies the next corollary.

\begin{corollary}\label{c4.7}
The following statements are equivalent for every tree \(T\):
\begin{enumerate}
\item \label{c4.7:s1} The ultrametric space \((V(T), d_{l})\) is complete for every non-degenerate labeling \(l \colon V(T) \to \RR^{+}\).
\item \label{c4.7:s2} \(T\) is rayless.
\end{enumerate}
\end{corollary}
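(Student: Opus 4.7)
The plan is to derive the corollary as a direct consequence of Theorem~\ref{t4.7}. The implication \ref{c4.7:s2} $\Rightarrow$ \ref{c4.7:s1} should be immediate: if $T$ is rayless, then condition \ref{t4.7:s2} of Theorem~\ref{t4.7} is vacuously satisfied for \emph{any} labeling $l$ on $V(T)$, since there are simply no rays $R \subseteq T$ for which \eqref{t4.7:e1} must be checked. Hence Theorem~\ref{t4.7} applies and yields that $(V(T), d_l)$ is complete for every non-degenerate $l \colon V(T) \to \RR^{+}$.

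For the converse \ref{c4.7:s1} $\Rightarrow$ \ref{c4.7:s2}, I would argue by contrapositive: assume $T$ contains a ray $R$ with $V(R) = \{x_1, x_2, \ldots\}$ and $E(R) = \{\{x_n, x_{n+1}\} \colon n \in \NN\}$, and exhibit a single non-degenerate labeling $l \colon V(T) \to \RR^{+}$ for which $(V(T), d_l)$ fails to be complete. By Theorem~\ref{t4.7}, it is enough to produce such an $l$ with $\limsup_{n \to \infty} l(x_n) = 0$.

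The construction I have in mind is as simple as possible: set
\[
l(v) = \begin{cases} 1/n & \text{if } v = x_n \in V(R),\\ 1 & \text{if } v \in V(T) \setminus V(R). \end{cases}
\]
Then $l$ takes only strictly positive values, so the non-degeneracy condition $\max\{l(u), l(v)\} > 0$ holds trivially for every edge $\{u, v\} \in E(T)$. On the other hand, $\lim_{n \to \infty} l(x_n) = 0$, so condition \ref{t4.7:s2} of Theorem~\ref{t4.7} is violated by the ray $R$; consequently the theorem tells us that $(V(T), d_l)$ is not complete, contradicting \ref{c4.7:s1}.

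I do not foresee a genuine obstacle here, as the content of the corollary is essentially a qualitative reformulation of Theorem~\ref{t4.7}: \ref{c4.7:s2} forces condition \ref{t4.7:s2} to hold vacuously, while the failure of \ref{c4.7:s2} leaves room to choose labels along some ray that tend to zero without affecting non-degeneracy elsewhere.
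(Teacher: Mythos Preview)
Your proposal is correct and matches the paper's approach exactly: the paper notes that condition~\ref{t4.7:s2} of Theorem~\ref{t4.7} is vacuously true for rayless trees, and that whenever a ray exists one can easily construct a non-degenerate labeling violating \eqref{t4.7:e1}. Your explicit choice $l(x_n)=1/n$ along the ray and $l\equiv 1$ elsewhere is a perfectly good instantiation of that remark.
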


Recall that a metric space \((X, d)\) is \emph{discrete} if every point of \(X\) is isolated.

\begin{theorem}\label{t4.11}
Let \(T = T(l)\) be a labeled tree with non-degenerate labeling. Then the following statements are equivalent:
\begin{enumerate}
\item \label{t4.11:s1} The ultrametric space \((V(T), d_{l})\) is discrete.
\item \label{t4.11:s2} For every \(v^{*} \in V(T)\) we have either \(l(v^{*}) > 0\) or \(l(v^{*}) = 0\) and 
\begin{equation}\label{t4.11:e1}
0 < \inf_{u \in V(N(v^{*}))} l(u),
\end{equation}
where \(N(v^{*})\) is the neighborhood of \(v^{*}\) in \(T\).
\end{enumerate}
\end{theorem}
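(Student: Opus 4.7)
My plan is to reduce discreteness of \((V(T), d_l)\) to a pointwise condition on each vertex \(v^*\), and then unpack the definition of \(d_l\) along the unique path granted by Lemma~\ref{l9.14}.

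First I would fix an arbitrary \(v^* \in V(T)\) and estimate \(d_l(v^*, x)\) for \(x \in V(T) \setminus \{v^*\}\). By formula~\eqref{e11.3}, if \((v^* = w_0, w_1, \ldots, w_k = x)\) is the unique path in \(T\) joining \(v^*\) and \(x\), then
\[
d_l(v^*, x) = \max_{0 \leqslant i \leqslant k} l(w_i) \geqslant \max\{l(v^*), l(w_1)\},
\]
with \(w_1 \in V(N(v^*))\). In the special case \(x = u \in V(N(v^*))\) the path reduces to \((v^*, u)\), so \(d_l(v^*, u) = \max\{l(v^*), l(u)\}\) exactly.

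I would then split into two cases. If \(l(v^*) > 0\), the lower bound above yields \(d_l(v^*, x) \geqslant l(v^*) > 0\) uniformly in \(x \neq v^*\), so \(v^*\) is automatically isolated. If \(l(v^*) = 0\), both the lower bound and the exact value reduce to the neighbor's label: \(d_l(v^*, x) \geqslant l(w_1) \geqslant \inf_{u \in V(N(v^*))} l(u)\) for arbitrary \(x \neq v^*\), while \(d_l(v^*, u) = l(u)\) for each \(u \in V(N(v^*))\). Taking the infimum over \(x \neq v^*\) therefore gives
\[
\inf_{x \in V(T) \setminus \{v^*\}} d_l(v^*, x) = \inf_{u \in V(N(v^*))} l(u),
\]
which is positive if and only if \eqref{t4.11:e1} holds. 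Thus at each \(v^*\) the property of being isolated is equivalent to the disjunction in \ref{t4.11:s2}, and the equivalence \ref{t4.11:s1} \(\Leftrightarrow\) \ref{t4.11:s2} follows by quantifying over all vertices.

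The only mild subtlety, which I expect to be the main (and essentially only) pitfall, is the degenerate case \(V(N(v^*)) = \varnothing\); by connectedness of \(T\) this forces \(V(T) = \{v^*\}\), in which case \((V(T), d_l)\) is trivially discrete and the infimum over the empty set is read as \(+\infty\), so both conditions hold. Otherwise non-degeneracy of \(l\) guarantees \(\max\{l(v^*), l(u)\} > 0\) for each neighbor \(u\) of \(v^*\), which is precisely what makes the two cases \(l(v^*) > 0\) versus \(l(v^*) = 0\) exhaustive and consistent with the equality for \(d_l(v^*, u)\) derived above.
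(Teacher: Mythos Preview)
Your proposal is correct and follows essentially the same approach as the paper's proof: both hinge on the observation that any path from \(v^*\) to \(x \neq v^*\) begins with a neighbor \(w_1 \in V(N(v^*))\), yielding \(d_l(v^*,x) \geqslant \max\{l(v^*), l(w_1)\}\), together with the exact value \(d_l(v^*,u) = \max\{l(v^*), l(u)\}\) for neighbors. The paper treats the two implications separately (using a sequence argument for \ref{t4.11:s1}\(\Rightarrow\)\ref{t4.11:s2} and a case split on \(\delta_T(v^*)\) for \ref{t4.11:s2}\(\Rightarrow\)\ref{t4.11:s1}), whereas you handle both at once via the clean identity \(\inf_{x \neq v^*} d_l(v^*,x) = \inf_{u \in V(N(v^*))} l(u)\) in the case \(l(v^*)=0\); this is a minor streamlining, not a different idea.
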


\begin{proof}
\(\ref{t4.11:s1} \Rightarrow \ref{t4.11:s2}\). Let \((V(T), d_{l})\) be a discrete ultrametric space. If \ref{t4.11:s2} does not hold, then there is a vertex \(v^{*}\) such that \(l(v^{*}) = 0\) and 
\[
\inf_{u \in V(N(v^{*}))} l(u) = 0.
\]
Hence, there exists a sequence \((u_n)_{n \in \NN} \subseteq V(N(v^{*}))\) such that
\[
\lim_{n \to \infty} l(u_n) = 0.
\]
The last limit relation, the equality \(l(v^{*}) = 0\), equality \eqref{e11.3} and the definition of the neighborhoods of vertices of graph imply that
\begin{equation}\label{t4.11:e0}
\lim_{n \to \infty} d(v^{*}, u_n) = 0
\end{equation}
holds. Hence, \(v^{*}\) is an accumulation point in \((V(T), d_{l})\), contrary to statement \ref{t4.11:s1}. 

\(\ref{t4.11:s2} \Rightarrow \ref{t4.11:s1}\). Let \ref{t4.11:s2} hold. Statement \ref{t4.11:s1} is valid if \(E(T) = \varnothing\). Indeed, in this case the vertex set of \(T\) is a single-point set \(\{v^{*}\}\). Thus, \(V(N(v^{*})) = \varnothing\) holds and, consequently, we have
\[
\inf_{u \in V(N(v^{*}))} l(u) = \inf_{u \in \varnothing} l(u) = +\infty,
\]
that implies \eqref{t4.11:e1}. (We consider here the empty set \(\varnothing\) as a subset of \([-\infty, \infty]\) and adopt the standard agreement on the supremum and infimum of empty set.)

Let \(E(T) \neq \varnothing\) hold. 

Suppose that \(v^{*}\) is a vertex of \(T\) such that \(l(v^{*}) > 0\). Then \eqref{e11.3} implies
\[
d_l(u, v^{*}) \geqslant l(v^{*})
\]
for every \(u \in V(T) \setminus \{v^{*}\}\). Hence, \(v^{*}\) is an isolated point of \((V(T), d_{l})\).

Let us consider now the case when \(v^{*} \in V(T)\) has the zero label, \(l(v^{*}) = 0\), and assume that we have \(\delta_{T}(v^{*}) < \infty\). Since the inequality \(\max\{l(u), l(v^{*})\} > 0\) holds for every \(u \in V(N(v^{*}))\), from \(0 < \delta_{T}(v^{*}) < \infty\) follows the inequality
\begin{equation}\label{t4.11:e3}
\min_{u \in V(N(v^{*}))} l(u) > 0. 
\end{equation}
Let \(u_0 \in V(T) \setminus \{v^{*}\}\). Then there is \(u^{*} \in V(P_{v^{*}, u_0})\) such that
\begin{equation}\label{t4.11:e2}
u^{*} \in V(N(v^{*})).
\end{equation}
Now from \eqref{e11.3} and \eqref{t4.11:e2} it follows that
\[
d_l(v^{*}, u_0) = \max_{u \in V(P_{v^{*}, u_0})} l(u) \geqslant d_l(v^{*}, u^{*}) \geqslant \min_{u \in V(N(v^{*}))} l(u) > 0.
\]
Hence, \(v^{*}\) is an isolated point of \((V(T), d_{l})\). 

Using inequality \eqref{t4.11:e1} instead of \eqref{t4.11:e3} and repeating the above arguments, we obtain that \(v^{*}\) is also isolated for the case \(l(v^{*}) = 0\) and \(\delta_{T}(v^{*}) = \infty\). Thus, the ultrametric space \((V(T), d_{l})\) is discrete. 
\end{proof}

\begin{corollary}\label{c4.12}
The following statements are equivalent for every tree \(T\):
\begin{enumerate}
\item \label{c4.12:s1} The ultrametric space \((V(T), d_{l})\) is discrete for every non-degenerate labeling \(l \colon V(T) \to \RR^{+}\).
\item \label{c4.12:s2} The tree \(T\) is locally finite.
\end{enumerate}
\end{corollary}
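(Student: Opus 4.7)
The plan is to reduce Corollary~\ref{c4.12} to Theorem~\ref{t4.11}, which already characterizes when a single non-degenerate labeling generates a discrete ultrametric. Since statement~\ref{c4.12:s1} quantifies over all non-degenerate labelings, I expect the forward direction (i)$\Rightarrow$(ii) to go by contrapositive and to require an explicit construction, while the reverse direction will be essentially a direct verification of condition~\ref{t4.11:s2} of Theorem~\ref{t4.11}.

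For \ref{c4.12:s2}$\Rightarrow$\ref{c4.12:s1}, assume $T$ is locally finite and let $l \colon V(T) \to \RR^{+}$ be any non-degenerate labeling. I will verify condition~\ref{t4.11:s2} of Theorem~\ref{t4.11} for every $v^{*} \in V(T)$. If $l(v^{*}) > 0$ there is nothing to check; if $l(v^{*}) = 0$, then non-degeneracy of $l$ together with the fact that $v^{*}$ is an endpoint of the edge $\{v^{*}, u\}$ forces $l(u) > 0$ for each $u \in V(N(v^{*}))$. Because $T$ is locally finite, $V(N(v^{*}))$ is finite. If it is empty then $\inf_{u \in \varnothing} l(u) = +\infty > 0$; otherwise the infimum is a minimum over a finite set of positive numbers, hence positive. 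Either way, \eqref{t4.11:e1} holds and Theorem~\ref{t4.11} gives discreteness of $(V(T), d_l)$.

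For \ref{c4.12:s1}$\Rightarrow$\ref{c4.12:s2} I argue by contraposition: suppose $T$ is not locally finite, so there is a vertex $v^{*} \in V(T)$ with $\delta_{T}(v^{*}) = \infty$. I will construct a single non-degenerate labeling $l$ for which $(V(T), d_l)$ fails to be discrete, by arranging that condition~\ref{t4.11:s2} of Theorem~\ref{t4.11} breaks at $v^{*}$. Since $V(N(v^{*}))$ is infinite, I pick a countably infinite sequence $(u_n)_{n \in \NN}$ of distinct vertices in $V(N(v^{*}))$ and set
\[
l(v^{*}) = 0, \qquad l(u_n) = \tfrac{1}{n} \ \text{for } n \in \NN, \qquad l(v) = 1 \ \text{for all other } v \in V(T).
\]
Then $\inf_{u \in V(N(v^{*}))} l(u) \leqslant \inf_{n} \tfrac{1}{n} = 0$ while $l(v^{*}) = 0$, so condition~\ref{t4.11:s2} is violated and Theorem~\ref{t4.11} yields that $(V(T), d_l)$ is not discrete.

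The only step that requires a moment of care is checking that this labeling $l$ is genuinely non-degenerate, since this is what I expect to be the main technical obstacle (small as it is): for every edge $\{u, w\} \in E(T)$ I must show $\max\{l(u), l(w)\} > 0$. The edges incident to $v^{*}$ are of the form $\{v^{*}, u\}$ with $u \in V(N(v^{*}))$, and here $l(u) \geqslant \tfrac{1}{n} > 0$ if $u = u_n$ and $l(u) = 1 > 0$ otherwise. Any edge not incident to $v^{*}$ has at least one endpoint that is neither $v^{*}$ nor any $u_n$ only in the generic case, but in all cases at least one endpoint has label $\tfrac{1}{n}$ or $1$, each strictly positive. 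Hence $l$ is non-degenerate, completing the contrapositive and the corollary.
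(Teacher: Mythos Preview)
Your proof is correct and follows the natural route via Theorem~\ref{t4.11}; the paper states the corollary without proof, so this is precisely the intended argument. One cosmetic remark: your non-degeneracy check is slightly tangled---since \(v^{*}\) is the \emph{only} vertex with label \(0\), every edge has at most one endpoint with zero label and hence \(\max\{l(u),l(w)\}>0\) automatically, which dispatches the verification in one line.
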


\begin{proposition}\label{p4.13}
Let \(T\) be a tree. Then the ultrametric space \((V(T), d_{l})\) contains a dense discrete subset for every non-degenerate \(l \colon V(T) \to \RR^{+}\).
\end{proposition}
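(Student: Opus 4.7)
The plan is to take $D$ to be the set of all isolated points of $(V(T), d_l)$: with this choice $D$ is discrete by definition, so the whole task reduces to showing $D$ is dense. The first step I would record is the observation already appearing in the proof of Theorem~\ref{t4.11}, namely that every vertex $v^*$ with $l(v^*) > 0$ lies in $D$, because for any $u \neq v^*$ the path $P_{v^*,u}$ contains $v^*$, and formula~\eqref{e11.3} yields $d_l(v^*,u) \geqslant l(v^*) > 0$. Hence $\{v \in V(T) : l(v) > 0\} \subseteq D$.

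For density, I fix an arbitrary $v^* \in V(T)$. If $v^* \in D$, the constant sequence $(v^*)$ works. Otherwise $v^*$ is an accumulation point, so the observation above forces $l(v^*) = 0$. Next, the implication $\ref{t4.11:s2} \Rightarrow \ref{t4.11:s1}$ of Theorem~\ref{t4.11} shows that any zero-labeled vertex with $\inf_{u \in V(N(v^*))} l(u) > 0$ is isolated, so from $v^* \notin D$ we must have $\inf_{u \in V(N(v^*))} l(u) = 0$. I then pick $(w_n)_{n \in \NN} \subseteq V(N(v^*))$ with $l(w_n) \to 0$; since each edge $\{v^*, w_n\}$ satisfies the non-degeneracy condition and $l(v^*) = 0$, we automatically get $l(w_n) > 0$, hence $w_n \in D$ by the previous step. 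Finally, $w_n$ being adjacent to $v^*$ gives $d_l(w_n, v^*) = \max\{l(w_n), l(v^*)\} = l(w_n) \to 0$ via~\eqref{e11.3}, which is exactly $w_n \to v^*$.

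The main conceptual obstacle is realising that the most obvious candidate, $\{v : l(v) > 0\}$, is not by itself dense: a zero-labeled vertex $v^*$ can be an isolated point of $(V(T), d_l)$ (for example whenever $v^*$ has finite degree, since non-degeneracy then forces every neighbor to carry a label bounded away from $0$), and no sequence of distinct vertices converges to such a point. Enlarging the candidate set to all isolated vertices removes this obstacle, and density then becomes an immediate consequence of the isolation criterion supplied by Theorem~\ref{t4.11}.
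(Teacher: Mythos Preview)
Your proof is correct and follows essentially the same approach as the paper: both take the set of isolated points as the dense discrete subset, use the isolation criterion from Theorem~\ref{t4.11} to conclude that any non-isolated vertex \(v^{*}\) must satisfy \(l(v^{*}) = 0\) and \(\inf_{u \in V(N(v^{*}))} l(u) = 0\), and then approximate \(v^{*}\) by neighbours \(w_n\) with \(l(w_n) \to 0\), which are isolated because non-degeneracy forces \(l(w_n) > 0\). Your closing remark explaining why \(\{v : l(v) > 0\}\) alone does not suffice is a helpful addition, but otherwise the arguments coincide.
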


\begin{proof}
Let \(l \colon V(T) \to \RR^{+}\) be non-degenerate. It was shown in the second part of the proof of Theorem~\ref{t4.11} that \(v \in V(T)\) is an isolated point of the ultrametric space \((V(T), d_l)\) if at least one of the conditions:
\begin{itemize}
\item \(\delta(v) < \infty\),
\item \(l(v) > 0\),
\item \(\delta(v) = \infty\), \(l(v) = 0\) and \(\inf_{u \in V(N(v^{*}))} l(u) > 0\)
\end{itemize}
is valid. Arguing as in the first part of the proof of Theorem~\ref{t4.11}, we can show that, for every \(v\) satisfying conditions \(\delta(v) = \infty\) and 
\[
l(v) = 0 = \inf_{u \in V(N(v^{*}))} l(u),
\]
there is a sequence \((u_n)_{n \in \NN} \subseteq V(N(v))\) which converges to \(v\) (see~\eqref{t4.11:e0}). Now it suffices to note that all elements of this sequence are isolated points of \((V(T), d_l)\) because \(l(v) = 0\) holds and \(l\) is non-degenerate.
\end{proof}

The following result gives us the necessary and sufficient conditions under which the ultrametric space \((V(T), d_l)\) is totally bounded.

\begin{theorem}\label{t11.8}
Let \(T = T(l)\) be a labeled tree with non-degenerate labeling. Then the following statements are equivalent:
\begin{enumerate}
\item \label{t11.8:s1} The ultrametric space \((V(T), d_{l})\) is totally bounded.
\item \label{t11.8:s2} The set 
\begin{equation}\label{t11.8:e1}
V_{\varepsilon} = \bigl\{v \in V(T) \colon l(v) \geqslant \varepsilon\bigr\}
\end{equation}
is finite for every \(\varepsilon > 0\) and the inequality \(\delta_{T}(v) < \infty\) holds whenever \(l(v) > 0\).
\end{enumerate}
\end{theorem}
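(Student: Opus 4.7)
The plan is to prove the two directions separately, both by contrapositive/direct arguments that exploit the tree structure to lift labels into distances via formula \eqref{e11.3}.

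For the direction $\ref{t11.8:s2} \Rightarrow \ref{t11.8:s1}$, I would fix an arbitrary $\varepsilon > 0$ and construct an explicit finite $\varepsilon$-net for $V(T)$. Deleting the finite set $V_\varepsilon$ from $T$ yields a forest $F$, and by uniqueness of paths in trees (Lemma~\ref{l9.14}), any two vertices of $V(T) \setminus V_\varepsilon$ lying in the same component of $F$ are joined in $T$ by a path entirely inside that component, so all internal labels along this path are $< \varepsilon$; thus each component of $F$ has $d_l$-diameter $< \varepsilon$. Since every vertex of $V_\varepsilon$ has positive label and therefore finite degree by hypothesis, an inductive removal of the vertices of $V_\varepsilon$ one at a time shows that the number of components of $F$ is at most $1 + \sum_{v \in V_\varepsilon}(\delta_T(v)-1)$, which is finite. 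Picking one representative from each component of $F$ and adjoining the finitely many points of $V_\varepsilon$ themselves gives a finite $\varepsilon$-net.

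For the direction $\ref{t11.8:s1} \Rightarrow \ref{t11.8:s2}$, I would argue by contrapositive using Proposition~\ref{p2.11}: if $(V(T), d_l)$ is totally bounded, then every sequence contains a Cauchy subsequence, so it suffices to exhibit a sequence with no Cauchy subsequence whenever either condition in \ref{t11.8:s2} fails. Suppose first that $V_\varepsilon$ is infinite for some $\varepsilon > 0$, and choose a sequence $(v_n)_{n \in \NN}$ of pairwise distinct points of $V_\varepsilon$. For any $m \neq n$, the path joining $v_m$ and $v_n$ contains $v_m$ as a vertex, so by \eqref{e11.3},
\[
d_l(v_m, v_n) \geqslant l(v_m) \geqslant \varepsilon,
\]
and no subsequence is Cauchy, contradicting total boundedness. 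Suppose next that there exists $v^{*} \in V(T)$ with $l(v^{*}) > 0$ and $\delta_T(v^{*}) = \infty$; then $V(N(v^{*}))$ contains a sequence $(u_n)_{n \in \NN}$ of distinct vertices, and for $m \neq n$ the unique path joining $u_m$ and $u_n$ in $T$ is $(u_m, v^{*}, u_n)$, whence
\[
d_l(u_m, u_n) \geqslant l(v^{*}) > 0,
\]
again ruling out a Cauchy subsequence.

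The only subtle point is the counting of components of the forest $F = T \setminus V_\varepsilon$ in the first direction; the cleanest way is the inductive one-vertex-at-a-time argument above, using that removing a single vertex $v$ of finite degree $\delta_T(v)$ from a subtree increases the number of tree components by exactly $\delta_T(v) - 1$. Everything else reduces to the basic observation that in a tree, labels along the unique connecting path control distances from below at any chosen vertex on that path.
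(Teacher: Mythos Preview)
Your proof is correct. The direction \ref{t11.8:s1} $\Rightarrow$ \ref{t11.8:s2} is essentially identical to the paper's argument: both exhibit an $\varepsilon$-separated infinite set (either $V_\varepsilon$ itself, or the neighborhood of a high-degree vertex with positive label) to contradict total boundedness.

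The direction \ref{t11.8:s2} $\Rightarrow$ \ref{t11.8:s1} is where your approach genuinely differs. The paper proves total boundedness via Proposition~\ref{p2.11}: given an arbitrary sequence, it fixes a null sequence $(\varepsilon_i)$, inductively peels off the finite sets $V_{\varepsilon_i}$ to obtain nested infinite subtrees $T^1 \supseteq T^2 \supseteq \cdots$ each containing a tail of a subsequence, and then extracts a diagonal Cauchy subsequence. You instead verify Definition~\ref{d2.10} directly: for a fixed $\varepsilon>0$ you delete $V_\varepsilon$ once, observe that each component of the resulting forest has $d_l$-diameter $<\varepsilon$ (since the unique path in $T$ between two vertices of a component stays in that component), and count the components by the finite-degree hypothesis. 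Your route is shorter and more explicit---it produces a concrete $\varepsilon$-net with an effective bound $|V_\varepsilon| + 1 + \sum_{v\in V_\varepsilon}(\delta_T(v)-1)$ on its size---whereas the paper's sequential argument is tailored to mesh with the Cauchy-sequence framework used elsewhere (Lemmas~\ref{l4.7}, \ref{l4.8}, Theorem~\ref{t4.7}). One small wording point: you write ``all internal labels along this path are $<\varepsilon$'', but of course the endpoint labels are $<\varepsilon$ too (the endpoints lie in $V(T)\setminus V_\varepsilon$), which is what actually gives the diameter bound; this is harmless.
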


\begin{proof}
\(\ref{t11.8:s1} \Rightarrow \ref{t11.8:s2}\). Let \ref{t11.8:s1} hold. Suppose that the set \(V_{\varepsilon}\) is infinite for some \(\varepsilon > 0\). Using the definition of \(d_{l}\) (see~\eqref{e11.3}) it is easy to prove the inequality
\begin{equation*}
d_{l}(v_1, v_2) \geqslant \varepsilon
\end{equation*}
for all distinct \(v_1\), \(v_2 \in V_{\varepsilon}\). Hence, the subspace \((V_{\varepsilon}, d_{l}|_{V_{\varepsilon} \times V_{\varepsilon}})\) of totally bounded ultrametric space \((V(T), d_{l})\) is not totally bounded, contrary to Corollary~\ref{c2.17}. Thus, \(V_{\varepsilon}\) is finite for every \(\varepsilon > 0\).

Assume now that \(T\) contains a vertex \(v^*\) of infinite degree, \(\delta (v^*) = \infty\), and \(l(v^{*}) > 0\) holds.

Let \(N(v^{*})\) be the neighborhood of \(v^{*}\). The equality \(\delta_{T}(v^*) = \infty\) implies that \(V(N(v^{*}))\) has an infinite cardinality. For all distinct \(u_1\), \(u_2 \in V(N(v^{*}))\) the unique path joining \(u_1\) and \(u_2\) in \(T\) has the form \((u_1, v^{*}, u_2)\). Hence, 
\[
d_l(u_1, u_2) \geqslant l(v^{*}) > 0
\]
holds by \eqref{e11.3}. It implies that the ultrametric space \((V(N(v^{*})), d_l|_{V(N(v^{*})) \times V(N(v^{*}))})\) is not totally bounded, contrary to \ref{t11.8:s1}. 

\(\ref{t11.8:s2} \Rightarrow \ref{t11.8:s1}\). Let \ref{t11.8:s2} hold. We must show that \((V(T), d_{l})\) is totally bounded. It is clear that \((V(T), d_{l})\) is totally bounded for finite \(T\). Let us consider the case when \(T\) is infinite.

By Proposition~\ref{p2.11}, the ultrametric space \((V(T), d_{l})\) is totally bounded if every sequence of vertices of \(T\) contains a Cauchy subsequence. Let us consider a sequence \((u_j^0)_{j \in \mathbb{N}}\) of pairwise distinct points of \(V(T)\). Let \((\varepsilon_i)_{i \in \mathbb{N}}\) be a decreasing sequence of strictly positive real numbers such that \(\lim_{i \to \infty} \varepsilon_i = 0\). Statement~\ref{t11.8:s2} implies that the set \(V_{\varepsilon_{1}}\) is finite. Write \(G^{1}\) for the subgraph of \(T\) induced by \(V(T) \setminus V_{\varepsilon_{1}}\), i.e., \(V(G^{1}) = V(T) \setminus V_{\varepsilon_{1}}\) and
\[
\bigl(\{u, v\} \in E(G^{1})\bigr) \Leftrightarrow \bigl(u, v \in V(G^{1}) \text{ and } \{u, v\} \in E(T)\bigr).
\]
Every connected component of \(G^{1}\) is a tree. Since \(V_{\varepsilon_{1}}\) is a finite set, and \(\delta_{T}(v) < \infty\) holds for every \(v \in V_{\varepsilon_{1}}\), the number of these components are finite. Since \(T\) is an infinite tree, there is an infinite subtree \(T^{1}\) of \(T\) with \(l(v^{1}) < \varepsilon_{1}\) for all \(v^{1} \in V(T^{1})\) and such that \((u_{j}^{1})_{j \in \mathbb{N}} \subseteq V(T^{1})\) holds for an infinite subsequence \((u_{j}^{1})_{j \in \mathbb{N}}\) of the sequence \((u_{j}^{0})_{j \in \mathbb{N}} \subseteq V(T)\). Write \(u^{1} = u_1^1\). 

Let us consider the subgraph \(G^{2}\) of \(T^{1}\) induced by \(V(T^{1}) \setminus V_{\varepsilon_{2}}\). As above, the finiteness of \(V_{\varepsilon_{2}}\) implies the existence of an infinite tree \(T^{2} \subseteq T^{1}\) and a subsequence \((u_{j}^{2})_{j \in \mathbb{N}}\) of the sequence \((u_{j}^{1})_{j \in \mathbb{N}} \subseteq V(T^{1})\) for which \((u_{j}^{2})_{j \in \mathbb{N}} \subseteq V(T^{2})\) holds. Let us write \(u^{2} = u_1^2\). 

By induction, for every \(i \geqslant 2\), we find an infinite connected component \(T^{i+1}\) of the subgraph \(G^{i+1}\) of \(T^{i}\) induced by \(V(T^{i}) \setminus V_{\varepsilon_{i+1}}\) and a subsequence \((u_{j}^{i+1})_{j \in \mathbb{N}}\) of the sequence \((u_{j}^{i})_{j \in \mathbb{N}}\) such that
\begin{equation}\label{t11.8:e8}
(u_{j}^{i+1})_{j \in \mathbb{N}} \subseteq V(T^{i+1}).
\end{equation}
Write \(u^{i+1}\) for the first element \(u_{1}^{i+1}\) of this sequence.

Let us consider now the sequence \((u^{i})_{i \in \mathbb{N}}\). It is clear that \((u^{i})_{i \in \mathbb{N}}\) is a subsequence of the original sequence \((u_j^0)_{j \in \mathbb{N}}\). From~\eqref{t11.8:e8} it follows that
\[
l(u^{i+1}) < \varepsilon_{i+1}
\]
holds for every \(i \in \mathbb{N}\). The last inequality and the limit relation \(\lim_{i \to \infty} \varepsilon_i = 0\) imply
\begin{equation}\label{t11.8:e9}
\lim_{i \to \infty} l(u^{i}) = 0.
\end{equation}
Moreover, since for every \(i \geqslant 2\) the points \(u^{i}\) and \(u^{i+1}\) are vertices of the tree \(T^{i}\) and the inequality \(l(v) \leqslant \varepsilon_{i}\) holds for every \(v \in V(T^{i})\), formula \eqref{e11.3} implies the inequality
\[
d_{l}(u^{i}, u^{i+1}) \leqslant l(u^{i-1})
\]
for every \(i \geqslant 2\). Now using Proposition~\ref{p4.5} and limit relation~\eqref{t11.8:e9} we obtain that \((u^{i})_{i \in \mathbb{N}}\) is a Cauchy sequence.

The same reasons show that \((u_j^0)_{j \in \mathbb{N}}\) contains a Cauchy subsequence whenever \((u_j^0)_{j \in \mathbb{N}}\) contains an infinite subsequence of pairwise distinct members.

To complete the proof, we note only that if all subsequences of pairwise distinct members of a sequence are finite, then there is an infinite constant subsequence of that sequence and this constant subsequence obviously is a Cauchy sequence. 
\end{proof}

\begin{corollary}\label{c4.8}
Let \(T = T(l)\) be a labeled tree with non-degenerate labeling. If the ultrametric space \((V(T), d_l)\) is totally bounded, then the set \(V(T)\) is at most countable.
\end{corollary}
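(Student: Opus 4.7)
The plan is to apply Theorem~\ref{t11.8} to split \(V(T)\) into the vertices with strictly positive labels and those with zero label, and to show that each of these sets is at most countable. Throughout we may assume \(|V(T)| \geqslant 2\); otherwise, there is nothing to prove.

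First I would handle the set \(V_{+} \stackrel{\text{def}}{=} \{v \in V(T) \colon l(v) > 0\}\). Since \((V(T), d_l)\) is totally bounded, statement \ref{t11.8:s2} of Theorem~\ref{t11.8} tells us that \(V_{1/n}\) is finite for every \(n \in \NN\), where \(V_{\varepsilon}\) is defined as in \eqref{t11.8:e1}. The equality
\[
V_{+} = \bigcup_{n \in \NN} V_{1/n}
\]
then exhibits \(V_{+}\) as a countable union of finite sets, so \(V_{+}\) is at most countable.

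Next I would address \(V_{0} \stackrel{\text{def}}{=} \{v \in V(T) \colon l(v) = 0\}\). Since \(T\) is connected and \(|V(T)| \geqslant 2\), every vertex \(v \in V_{0}\) has at least one neighbor \(u\); the non-degeneracy of \(l\) forces \(\max\{l(v), l(u)\} > 0\), hence \(l(u) > 0\), i.e., \(u \in V_{+}\). Consequently,
\[
V_{0} \subseteq \bigcup_{u \in V_{+}} V(N(u)).
\]
By the second half of statement \ref{t11.8:s2} of Theorem~\ref{t11.8}, each \(u \in V_{+}\) satisfies \(\delta_{T}(u) < \infty\), so each \(V(N(u))\) is finite. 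Hence \(V_{0}\) is contained in a countable union of finite sets, and therefore \(V_{0}\) is at most countable.

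Combining the two parts, \(V(T) = V_{+} \cup V_{0}\) is at most countable, as claimed. I do not expect any substantial obstacle here: the entire argument rides on Theorem~\ref{t11.8}, and the only mildly delicate point is ensuring that each zero-labeled vertex has a neighbor in \(V_{+}\), which is immediate from the non-degeneracy of \(l\) together with the connectedness of \(T\).
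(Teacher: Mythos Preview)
Your proof is correct. Both your argument and the paper's rely on the two conclusions of Theorem~\ref{t11.8}, but the organization differs. The paper argues that it suffices to show \(\delta_T(v^{*}) \leqslant \aleph_0\) for every vertex \(v^{*}\): if \(l(v^{*}) > 0\) this is immediate from Theorem~\ref{t11.8}, and if \(l(v^{*}) = 0\) one observes \(V(N(v^{*})) \subseteq \bigcup_{n} V_{1/n}\), which is at most countable. This leaves implicit the (standard) fact that a connected graph all of whose vertices have at most countable degree has at most countable vertex set. Your decomposition \(V(T) = V_{+} \cup V_{0}\) bypasses that implicit step entirely: you count \(V_{+}\) directly via \(\bigcup_n V_{1/n}\), and then cover \(V_{0}\) by the finite neighborhoods of the countably many vertices in \(V_{+}\). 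The two arguments are dual in a sense---the paper bounds the neighborhoods of zero-labeled vertices inside \(V_{+}\), while you bound \(V_{0}\) inside the neighborhoods of positively labeled vertices---but yours is slightly more self-contained.
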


\begin{proof}
It suffices to show that the inequality 
\begin{equation}\label{c4.8:e1}
\delta_{T}(v^*) \leqslant \aleph_0
\end{equation}
holds for every vertex \(v^*\) of \(T\), where \(\aleph_0\) is the cardinality of \(\mathbb{N}\).

Let \((V(T), d_l)\) be totally bounded and let \(v^*\) be a vertex of \(T\). Inequality \eqref{c4.8:e1} follows directly from Theorem~\ref{t11.8} if \(l(v^*) > 0\). Suppose that \(l(v^*) = 0\) holds. Then for every \(\{u, v^*\} \in E(T)\) we have the inequality \(l(u) > 0\). Consequently, the vertex set \(V(N(v^{*}))\) satisfies the inclusion 
\begin{equation}\label{c4.8:e2}
V(N(v^{*})) \subseteq \bigcup_{n \in \mathbb{N}} V_{1/n},
\end{equation}
where \(V_{1/n}\) is defined by \eqref{t11.8:e1} with \(\varepsilon = 1/n\). By Theorem~\ref{t11.8}, \(V_{1/n}\) is finite for every \(n \in \mathbb{N}\). Hence, \(\bigcup_{n \in \mathbb{N}} V_{1/n}\) is at most countable. Now inequality \eqref{c4.8:e1} follows from \eqref{c4.8:e2}.
\end{proof}

\begin{theorem}\label{t11.16}
Let \(T = T(l)\) be a labeled tree with non-degenerate labeling. Then the following conditions are equivalent:
\begin{enumerate}
\item \label{t11.16:s1} The ultrametric space \((V(T), d_{l})\) is discrete and totally bounded.
\item \label{t11.16:s2} The tree \(T\) is locally finite and the set
\begin{equation*}
V_{\varepsilon} = \bigl\{v \in V(T) \colon l(v) \geqslant \varepsilon\bigr\}
\end{equation*}
is finite for every \(\varepsilon > 0\).
\end{enumerate}
\end{theorem}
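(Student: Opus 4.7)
The plan is to deduce Theorem~\ref{t11.16} as a direct corollary of the two preceding characterizations, namely Theorem~\ref{t4.11} for discreteness and Theorem~\ref{t11.8} for total boundedness, together with Corollary~\ref{c4.12}. Since discreteness $+$ total boundedness is literally the conjunction of the two properties, each implication will translate into a short combinatorial argument about labelings on neighborhoods.

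For the implication \ref{t11.16:s1} $\Rightarrow$ \ref{t11.16:s2}, I would first observe that the finiteness of every $V_\varepsilon$ is immediate from Theorem~\ref{t11.8}. The work is to prove local finiteness, so suppose for contradiction that some $v^{*}\in V(T)$ satisfies $\delta_T(v^{*}) = \infty$. Theorem~\ref{t11.8} rules out $l(v^{*})>0$, so we must have $l(v^{*})=0$. Then Theorem~\ref{t4.11} gives
\[
\alpha \;\stackrel{\text{def}}{=}\; \inf_{u \in V(N(v^{*}))} l(u) \;>\; 0.
\]
Consequently $V(N(v^{*})) \subseteq V_{\alpha}$, but $V(N(v^{*}))$ is infinite whereas $V_{\alpha}$ is finite by hypothesis, a contradiction.

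For the implication \ref{t11.16:s2} $\Rightarrow$ \ref{t11.16:s1}, local finiteness of $T$ gives, via Corollary~\ref{c4.12}, that $(V(T), d_l)$ is discrete for every non-degenerate labeling $l$, and in particular for the one fixed in the statement. In addition, local finiteness yields $\delta_T(v) < \infty$ for every $v\in V(T)$, a fortiori whenever $l(v)>0$; combined with the finiteness of each $V_\varepsilon$, condition \ref{t11.8:s2} of Theorem~\ref{t11.8} holds and therefore $(V(T), d_l)$ is totally bounded.

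No real obstacle is expected: the theorem is essentially a merging of Theorem~\ref{t4.11} and Theorem~\ref{t11.8}, and the only nontrivial step is the short observation that, under non-degeneracy, a vertex of infinite degree with zero label forces arbitrarily small positive labels on its neighbors, which is incompatible with the finiteness of every $V_\varepsilon$. Once this bridging remark is made, both directions reduce to invoking the previously proven characterizations.
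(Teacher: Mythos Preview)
Your proof is correct and follows essentially the same route as the paper: both directions are reduced to Theorem~\ref{t11.8}, Theorem~\ref{t4.11}, and Corollary~\ref{c4.12}, with the key step being that a vertex \(v^{*}\) of infinite degree would have \(l(v^{*})=0\) by Theorem~\ref{t11.8}, hence \(\inf_{u\in V(N(v^{*}))} l(u)>0\) by Theorem~\ref{t4.11}, forcing the infinite set \(V(N(v^{*}))\) into some finite \(V_{\alpha}\). One minor wording slip: in your closing paragraph you say that such a vertex ``forces arbitrarily small positive labels on its neighbors,'' but the actual contradiction (which you state correctly in the formal part) is the opposite, namely that the neighbors all have label at least \(\alpha>0\).
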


\begin{proof}
\(\ref{t11.16:s1} \Rightarrow \ref{t11.16:s2}\). Let \ref{t11.16:s1} hold. Then the set \(V_{\varepsilon}\) is finite for every \(\varepsilon > 0\) by Theorem~\ref{t11.8}. 

Assume now that \(T\) contains a vertex \(v^*\) of infinite degree, \(\delta (v^*) = \infty\). Then, using Theorem~\ref{t11.8} again, we obtain the equality
\begin{equation}\label{t11.16:e4}
l(v^{*}) = 0.
\end{equation}
By Theorem~\ref{t4.11}, equality~\eqref{t11.16:e4} and discreteness of \((V(T), d_{l})\) imply that there is \(\varepsilon^{*} > 0\) such that
\[
\inf_{u \in V(N(v^{*}))} l(u) \geqslant \varepsilon^{*}.
\]
Hence, we have the inclusion \(V(N(v^{*})) \subseteq V_{\varepsilon^{*}}\). It was shown above that \(V_{\varepsilon}\) is finite for every \(\varepsilon > 0\). Consequently, \(V(N(v^{*}))\) is also finite as a subset of a finite set, contrary to \(\delta_T (v^{*}) = \infty\).

\(\ref{t11.16:s2} \Rightarrow \ref{t11.16:s1}\). Let \ref{t11.16:s2} hold. Then \(T\) is locally finite and, consequently, the ultrametric space \((V(T), d_{l})\) is discrete by Corollary~\ref{c4.12}. To complete the proof, it suffices to note that \((V(T), d_{l})\) is totally bounded by Theorem~\ref{t11.8}.
\end{proof}

Corollary~\ref{c4.12} and Theorem~\ref{t11.16} imply the following.

\begin{corollary}\label{c4.15}
Let \(T\) be a tree. Then the following statements are equivalent:
\begin{enumerate}
\item \label{c4.15:s1} There is a non-degenerate labeling \(l_1 \colon V(T) \to \RR^{+}\) for which the ultrametric space \((V(T), d_{l_1})\) is discrete and totally bounded.
\item \label{c4.15:s2} The ultrametric space \((V(T), d_{l})\) is discrete for every non-degenerate labeling \(l \colon V(T) \to \RR^{+}\).
\end{enumerate}
\end{corollary}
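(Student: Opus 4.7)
The plan is to deduce this corollary by combining Corollary~\ref{c4.12} (which links ``discrete for every non-degenerate labeling'' with local finiteness of \(T\)) and Theorem~\ref{t11.16} (which characterizes discrete totally bounded labeled trees as those which are locally finite and whose labels exceed a given threshold only finitely often).

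For the implication \(\ref{c4.15:s1} \Rightarrow \ref{c4.15:s2}\), suppose a non-degenerate \(l_1 \colon V(T) \to \RR^{+}\) generates a discrete totally bounded ultrametric. Then Theorem~\ref{t11.16} applied to \(T(l_1)\) yields that \(T\) is locally finite, after which Corollary~\ref{c4.12} immediately gives that \((V(T), d_l)\) is discrete for every non-degenerate \(l\).

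For the converse \(\ref{c4.15:s2} \Rightarrow \ref{c4.15:s1}\), by Corollary~\ref{c4.12} the hypothesis already forces \(T\) to be locally finite, so it only remains to exhibit a non-degenerate labeling \(l_1\) satisfying the finiteness-of-\(V_\varepsilon\) condition from Theorem~\ref{t11.16}. The key observation is that a locally finite tree is at most countable (a standard consequence of local finiteness and connectedness: fix any vertex \(v_0\) and note that the set of vertices at combinatorial distance \(\leqslant n\) from \(v_0\) is finite for each \(n\), and \(T\) is the union of these sets). Enumerating \(V(T) = \{v_1, v_2, \ldots\}\), define
\[
l_1(v_n) = \frac{1}{n}.
\]
Then \(l_1\) takes only strictly positive values, so it is automatically non-degenerate, and
\[
V_\varepsilon = \{v_n \colon 1/n \geqslant \varepsilon\}
\]
is finite for every \(\varepsilon > 0\). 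Combined with local finiteness of \(T\), Theorem~\ref{t11.16} yields that \((V(T), d_{l_1})\) is discrete and totally bounded. (If \(V(T)\) happens to be finite, the constant labeling \(l_1 \equiv 1\) works trivially.)

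No step presents any real obstacle: the entire argument is essentially bookkeeping between the two cited results, with the only mild subtlety being the invocation of countability of a locally finite tree in order to produce an explicit labeling witnessing condition~\ref{c4.15:s1}.
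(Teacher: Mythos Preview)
Your proposal is correct and follows essentially the same approach as the paper: both directions go through local finiteness of \(T\) via Corollary~\ref{c4.12} and Theorem~\ref{t11.16}, and the explicit labeling \(l_1(v_n)=1/n\) on an enumeration of \(V(T)\) is exactly the construction used in the paper. The only cosmetic difference is that in the finite case the paper simply declares \ref{c4.15:s1} ``trivially valid'' rather than naming the constant labeling, and you add a brief justification of why a locally finite tree is at most countable.
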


\begin{proof}
If \ref{c4.15:s1} holds, then \(T\) is locally finite by Theorem~\ref{t11.16}, that implies \ref{c4.15:s2} by Corollary~\ref{c4.12}.

Conversely, suppose that \ref{c4.15:s2} holds. Then, using Corollary~\ref{c4.12} again, we see that \(T\) is locally finite. If \(T\) is finite, then \ref{c4.15:s1} is trivially valid. Every infinite locally finite tree has countable vertex set. Thus, all vertices of \(T\) can be numbered in a sequence \((v_n)_{n \in \NN}\) of pairwise different points and we can define a labeling \(l_1 \colon V(T) \to \RR^{+}\) as
\[
l_1(v_n) = \frac{1}{n}
\]
for every \(n \in \NN\). Then \(l_1\) is a non-degenerate labeling and \(T(l_1)\) satisfies condition \ref{t11.16:s2} of Theorem~\ref{t11.16}. Hence, \((V(T), d_{l_1})\) is discrete and totally bounded.
\end{proof}

Using Corollaries~\ref{c4.12} and \ref{c4.15} we obtain.

\begin{corollary}\label{c4.18}
Let \(T\) be a tree. Then the following statements are equivalent:
\begin{enumerate}
\item\label{c4.18:s1} \(T\) is locally finite.
\item\label{c4.18:s2} There is a non-degenerate labeling \(l_1 \colon V(T) \to \RR^{+}\) for which \((V(T), d_{l_1})\) is discrete and totally bounded.
\end{enumerate}
\end{corollary}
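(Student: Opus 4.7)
The proof is a straightforward chaining of the two cited corollaries, so my plan is essentially to identify each statement with an equivalent one already established.

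First, I would observe that statement \ref{c4.18:s2} of the corollary is literally the same as statement \ref{c4.15:s1} of Corollary~\ref{c4.15}. Therefore, by the equivalence established in Corollary~\ref{c4.15}, statement \ref{c4.18:s2} is equivalent to statement \ref{c4.15:s2} of Corollary~\ref{c4.15}, which reads: the ultrametric space \((V(T), d_{l})\) is discrete for every non-degenerate labeling \(l \colon V(T) \to \RR^{+}\). But this is precisely statement \ref{c4.12:s1} of Corollary~\ref{c4.12}, which, in turn, is equivalent to statement \ref{c4.12:s2}, namely that \(T\) is locally finite. Since local finiteness of \(T\) is statement \ref{c4.18:s1}, the equivalence \(\ref{c4.18:s1} \Leftrightarrow \ref{c4.18:s2}\) follows.

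Thus the argument can be summarized in one chain:
\[
\ref{c4.18:s1} \stackrel{\text{Cor.~\ref{c4.12}}}{\Longleftrightarrow} \bigl(\forall l \text{ non-deg.: } (V(T), d_l) \text{ discrete}\bigr) \stackrel{\text{Cor.~\ref{c4.15}}}{\Longleftrightarrow} \ref{c4.18:s2}.
\]

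There is no genuine obstacle here: the work has already been done in Theorems~\ref{t4.11}, \ref{t11.8}, and \ref{t11.16} and their corollaries. The only thing to be careful about is to cite the two previous corollaries in the correct direction and to verify that the two formulations of the ``discrete and totally bounded'' statement really do match verbatim, which they do.
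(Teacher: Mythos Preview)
Your proposal is correct and follows exactly the approach indicated by the paper, which simply states that the corollary is obtained using Corollaries~\ref{c4.12} and~\ref{c4.15}. Your chain of equivalences spells out precisely this derivation.
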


\begin{theorem}\label{t4.18}
Let \(T = T(l)\) be a labeled tree with non-degenerate labeling. Then the following statements are equivalent:
\begin{enumerate}
\item \label{t4.18:s1} The ultrametric space \((V(T), d_{l})\) is compact.
\item \label{t4.18:s2} The tree \(T\) is rayless and the set
\begin{equation*}
V_{\varepsilon} = \bigl\{v \in V(T) \colon l(v) \geqslant \varepsilon\bigr\}
\end{equation*}
is finite for every \(\varepsilon > 0\) and the inequality \(\delta_{T}(v) < \infty\) holds whenever \(l(v) > 0\).
\end{enumerate}
\end{theorem}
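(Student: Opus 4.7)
The plan is to prove Theorem~\ref{t4.18} by reducing it to Corollary~\ref{c2.9}, namely that a metric space is compact if and only if it is complete and totally bounded, and then invoking Theorem~\ref{t4.7} (or Corollary~\ref{c4.7}) for completeness and Theorem~\ref{t11.8} for total boundedness. Observe that the conjunction of the last two bullet points in condition~\ref{t4.18:s2} (finiteness of \(V_\varepsilon\) for every \(\varepsilon>0\) and \(\delta_T(v)<\infty\) whenever \(l(v)>0\)) is precisely the characterization of total boundedness from Theorem~\ref{t11.8}. So the real content of the theorem is identifying the correct strengthening of the completeness condition from Theorem~\ref{t4.7} under the additional total boundedness hypotheses.

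For the direction \(\ref{t4.18:s1}\Rightarrow\ref{t4.18:s2}\), I would first apply Corollary~\ref{c2.9}: compactness of \((V(T),d_l)\) implies that this space is both complete and totally bounded. Total boundedness together with Theorem~\ref{t11.8} immediately yields the finiteness of each \(V_\varepsilon\) and the local finiteness at every vertex with positive label. To get raylessness, I would argue by contradiction: assume that \(T\) contains a ray \(R\) with vertex sequence \((x_n)_{n\in\NN}\). Since \((V(T),d_l)\) is complete, Theorem~\ref{t4.7} forces \(\limsup_{n\to\infty}l(x_n)>0\), so there exist \(c>0\) and infinitely many indices \(n\) with \(l(x_n)\geqslant c\); then \(V_c\) is infinite, contradicting the just-established finiteness of every \(V_\varepsilon\). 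Hence \(T\) must be rayless.

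For the direction \(\ref{t4.18:s2}\Rightarrow\ref{t4.18:s1}\), I would apply Corollary~\ref{c4.7} to the raylessness of \(T\) to conclude that \((V(T),d_l)\) is complete (since \(l\) is non-degenerate). Independently, the remaining two conditions in~\ref{t4.18:s2} coincide with statement~\ref{t11.8:s2} of Theorem~\ref{t11.8}, so \((V(T),d_l)\) is totally bounded. One final application of Corollary~\ref{c2.9} then gives compactness.

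There is no genuine obstacle here; the entire argument is a repackaging of the two earlier characterizations through the Spatial Criterion. The only small subtlety is the \(\Rightarrow\) direction, where one must notice that completeness alone does not yield raylessness, but completeness together with finiteness of \(V_\varepsilon\) does — this is exactly the step where \(\limsup l(x_n)>0\) on a hypothetical ray is translated into infiniteness of some \(V_c\), producing the desired contradiction.
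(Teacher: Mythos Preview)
Your proposal is correct and follows essentially the same route as the paper: both directions are deduced from the Spatial Criterion (Corollary~\ref{c2.9}) together with Theorem~\ref{t4.7} (or Corollary~\ref{c4.7}) and Theorem~\ref{t11.8}, and the only nontrivial step—deriving raylessness in the forward direction by converting \(\limsup_n l(x_n)>0\) on a hypothetical ray into the infiniteness of some \(V_c\)—is handled exactly as in the paper.
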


\begin{proof}
\(\ref{t4.18:s1} \Rightarrow \ref{t4.18:s2}\). Let \((V(T), d_{l})\) be a compact ultrametric space. Every compact metric space is totally bounded and complete by Corollary~\ref{c2.9}. Hence, by Theorem~\ref{t11.8}, for every \(\varepsilon > 0\) the set \(V_{\varepsilon}\) is finite, and \(\delta_{T}(v) < \infty\) holds for all vertices \(v\) with \(l(v) > 0\). 

Suppose that there is a ray \(R \subseteq T\). Then the completeness of \((V(T), d_{l})\) and Theorem~\ref{t4.7} imply the existence of \(\varepsilon^{*} > 0\) and of a sequence \((x_n)_{n \in \NN}\) of pairwise distinct vertices of \(R\) such that
\[
\limsup_{n \to \infty} l(x_n) \geqslant \varepsilon^{*} > 0.
\]
Thus, the set \(\{v \in V(R) \colon l(v) \geqslant \frac{1}{2} \varepsilon^{*}\}\) is infinite, contrary to the finiteness of the set \(V_{\frac{\varepsilon^{*}}{2}}\) which contains \(\{v \in V(R) \colon l(v) \geqslant \frac{1}{2} \varepsilon^{*}\}\).

\(\ref{t4.18:s2} \Rightarrow \ref{t4.18:s1}\). Let \ref{t4.18:s2} hold. Then \ref{t4.18:s1} follows from the Spatial Criterion (Corollary~\ref{c2.9}) and Theorems~\ref{t4.7}, \ref{t11.8}.
\end{proof}

\begin{theorem}\label{t7}
Let \(T\) be a tree. Then the following statements are equivalent:
\begin{enumerate}
\item \label{t7:s1} \(T\) is rayless, and the set \(V(T)\) is at most countable, and, for every \(\{x, y\} \in E(T)\), at least one from the degrees \(\delta(x)\) and \(\delta(y)\) is finite.
\item \label{t7:s2} There is a non-degenerate labeling \(l_1 \colon V(T) \to \mathbb{R}^{+}\) for which the ultrametric space \((V(T), d_{l_1})\) is compact.
\end{enumerate}
\end{theorem}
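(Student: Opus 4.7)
The plan is to derive both directions from the compactness characterization in Theorem~\ref{t4.18} together with the countability result in Corollary~\ref{c4.8}. Statement~\ref{t4.18:s2} of Theorem~\ref{t4.18} already captures raylessness plus finiteness of level sets $V_\varepsilon$ plus finiteness of degree at vertices with positive label; the only new ingredient needed is to translate the condition that no edge has two infinite-degree endpoints into a condition on labelings, using non-degeneracy.

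For the direction \ref{t7:s2} $\Rightarrow$ \ref{t7:s1}, start from a non-degenerate labeling $l_1$ such that $(V(T), d_{l_1})$ is compact. Theorem~\ref{t4.18} immediately yields that $T$ is rayless and that $\delta(v) < \infty$ whenever $l_1(v) > 0$. Since compactness implies total boundedness (Corollary~\ref{c2.9}), Corollary~\ref{c4.8} gives that $V(T)$ is at most countable. For the remaining clause, suppose for contradiction that some edge $\{x, y\} \in E(T)$ satisfies $\delta(x) = \delta(y) = \infty$. Non-degeneracy forces $\max\{l_1(x), l_1(y)\} > 0$, so without loss of generality $l_1(x) > 0$, which contradicts $\delta(x) = \infty$ via Theorem~\ref{t4.18}.

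For the direction \ref{t7:s1} $\Rightarrow$ \ref{t7:s2}, I would construct $l_1$ explicitly. Set $A = \{v \in V(T) : \delta(v) = \infty\}$ and declare $l_1(v) = 0$ for every $v \in A$. The countability hypothesis makes $V(T) \setminus A$ at most countable; enumerate it (without repetition) as $u_1, u_2, \ldots$ and put $l_1(u_n) = 1/n$, with the obvious modification (say $l_1(u_i) = 1$ for all $i$) if the complement is finite. The hypothesis that no edge has both endpoints of infinite degree ensures that each edge has at least one endpoint in $V(T) \setminus A$ carrying a strictly positive label, so $l_1$ is non-degenerate. By construction, $l_1(v) > 0$ forces $v \notin A$, hence $\delta(v) < \infty$; and the level set $V_\varepsilon$ is contained in $\{u_n : 1/n \geqslant \varepsilon\}$, which is finite for every $\varepsilon > 0$. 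Together with the assumed raylessness, statement~\ref{t4.18:s2} of Theorem~\ref{t4.18} is verified, so $(V(T), d_{l_1})$ is compact.

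I do not expect a serious obstacle here, since Theorem~\ref{t4.18} does most of the work; the only subtle point is the construction step, in which the structural hypothesis on adjacent infinite-degree vertices is used precisely to make the simple labeling (zero on $A$, positive elsewhere) non-degenerate. Any attempt to use a labeling that is positive everywhere would, via the clause ``$\delta(v) < \infty$ whenever $l_1(v) > 0$'' from Theorem~\ref{t4.18}, force $T$ to be locally finite — a strictly stronger condition than the one actually hypothesized — so assigning label $0$ to the vertices of infinite degree is essentially forced by the argument.
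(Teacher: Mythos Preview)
Your proof is correct and follows essentially the same approach as the paper: both directions are reduced to Theorem~\ref{t4.18} and Corollary~\ref{c4.8}, with the labeling in \ref{t7:s1} $\Rightarrow$ \ref{t7:s2} taken to be zero on the vertices of infinite degree and strictly positive (with level sets $V_\varepsilon$ finite) on the rest. Your explicit choice $l_1(u_n)=1/n$ is a concrete realization of what the paper leaves as an existence claim, and your contradiction argument for the edge condition in \ref{t7:s2} $\Rightarrow$ \ref{t7:s1} matches the paper's verbatim.
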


\begin{proof}
\(\ref{t7:s1} \Rightarrow \ref{t7:s2}\). Let \ref{t7:s1} hold. Let us define the subsets \(V^F\) and \(V^I\) of \(V(T)\) as
\begin{gather*}
V^F \stackrel{\text{def}}{=} \bigl\{v \in V(T) \colon \delta(v) \text{ is finite}\bigr\}, \quad
V^I \stackrel{\text{def}}{=} \bigl\{v \in V(T) \colon \delta(v) \text{ is infinite}\bigr\}.
\end{gather*}
The set \(V(T)\) is at most countable. Consequently, there is a labeling \(l_1 \colon V(T) \to \mathbb{R}^{+}\) such that:
\begin{itemize}
\item the set \(V_{\varepsilon} = \{v \in V(T) \colon l_1(v)  \geqslant \varepsilon\}\) is finite for every \(\varepsilon > 0\); 
\item the inequality \(l_1(u) > 0\) holds for every \(u \in V^F\); 
\item the equality \(l_1(w) = 0\) holds for every \(w \in V^I\). 
\end{itemize}

These properties of \(l_1\) and statement \ref{t7:s1} imply the inequality
\[
\max \bigl\{l_1(x), l_1(y)\bigr\} > 0
\]
for every \(\{x, y\} \in E(T)\). Hence, \(l_1 \colon V(T) \to \mathbb{R}^{+}\) is a non-degenerate labeling. Thus, \((V(T), d_{l_1})\) is compact by Theorem~\ref{t4.18}.

\(\ref{t7:s2} \Rightarrow \ref{t7:s1}\). Let \(l_1 \colon V(T) \to \mathbb{R}^{+}\) be a non-degenerate labeling for which the ultrametric space \((V(T), d_{l_1})\) is compact. Then, from Theorem~\ref{t4.18} it follows that \(T\) is rayless. Moreover, since every compact metric space is totally bounded, the set \(V(T)\) is at most countable by Corollary~\ref{c4.8}.

To complete the proof it is enough to show that the inequality
\begin{equation*}
\min\bigl\{\delta(u), \delta(v)\bigr\} < \infty
\end{equation*}
holds for every \(\{u, v\} \in E(T)\). Assume to the contrary that there exists \(\{u, v\} \in E(T)\) such that
\[
\delta(u) = \delta(v) = \aleph_0.
\]
Since \(l_1 \colon V(T) \to \mathbb{R}^{+}\) is a non-degenerate labeling, \(\{u, v\} \in E(T)\) implies
\[
\max\bigl\{l_1(u), l_1(v)\bigr\} > 0.
\]
Without loss of generality, we may assume that \(l_1(v) > 0\). The last inequality, the inequality \(\delta(v) > 0\) and Theorem~\ref{t4.18} imply that \((V(T), d_{l_1})\) is not compact, contrary to the definition of \(l_1 \colon V(T) \to \mathbb{R}^{+}\).
\end{proof}

Corollary~\ref{c4.7} and Theorem~\ref{t7} imply the following.

\begin{corollary}\label{c4.23}
Let \(T\) be a tree. If there is a non-degenerate labeling \(l_1 \colon V(T) \to \RR^{+}\) for which the ultrametric space \((V(T), d_{l_1})\) is compact, then \((V(T), d_{l})\) is complete for every non-degenerate labeling \(l \colon V(T) \to \RR^{+}\).
\end{corollary}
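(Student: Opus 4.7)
\textbf{Proof plan for Corollary~\ref{c4.23}.} The plan is to chain together two results already proved in the section, namely Theorem~\ref{t7} and Corollary~\ref{c4.7}, with raylessness as the bridging property. The hypothesis is that there exists a non-degenerate labeling $l_1 \colon V(T) \to \RR^{+}$ such that $(V(T), d_{l_1})$ is compact; the goal is the universal completeness statement.

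First, I would invoke the implication $\ref{t7:s2} \Rightarrow \ref{t7:s1}$ of Theorem~\ref{t7}. Since $l_1$ is a non-degenerate labeling for which $(V(T), d_{l_1})$ is compact, statement \ref{t7:s2} of Theorem~\ref{t7} holds. Hence statement~\ref{t7:s1} holds: in particular, $T$ is rayless (the additional conclusions that $V(T)$ is at most countable and that no edge of $T$ joins two vertices of infinite degree are not needed here, but they are automatically available).

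Second, I would apply the implication $\ref{c4.7:s2} \Rightarrow \ref{c4.7:s1}$ of Corollary~\ref{c4.7}. Since $T$ is rayless, that corollary says that $(V(T), d_l)$ is complete for every non-degenerate labeling $l \colon V(T) \to \RR^{+}$, which is precisely the desired conclusion.

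There is no real obstacle here: the statement is almost a tautological consequence of the previously established characterizations, and the proof amounts to extracting raylessness from Theorem~\ref{t7} and feeding it into Corollary~\ref{c4.7}. The only thing worth double-checking is that the hypothesis of Theorem~\ref{t7} matches exactly (non-degeneracy and compactness for \emph{some} labeling), and that the output of Corollary~\ref{c4.7} is universal over all non-degenerate labelings, which is exactly what is required.
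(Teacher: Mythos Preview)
Your proof is correct and matches the paper's approach exactly: the paper simply states that Corollary~\ref{c4.7} and Theorem~\ref{t7} imply the result, and your proposal spells out precisely this chain (compactness for some labeling $\Rightarrow$ rayless via Theorem~\ref{t7}, rayless $\Rightarrow$ complete for all labelings via Corollary~\ref{c4.7}).
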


\begin{remark}\label{r4.21}
It is interesting to compare Corollary~\ref{c4.23} with the following theorem: ``A metrizable topological space \((X, \tau)\) is compact if and only if every metric generated the topology \(\tau\) is complete.'' This result was proved by Niemytzki and Tychonoff in 1928 \cite{NTFM1928}. There is also an ultrametric modification of Niemytzki---Tychonoff theorem recently obtained by Yoshito Ishiki~\cite{Isha2020}.
\end{remark}

The next corollary follows from Proposition~\ref{p3.2} and Theorems~\ref{t11.16}, \ref{t4.18}.

\begin{corollary}\label{c4.20}
The following statements are equivalent for every tree \(T\):
\begin{enumerate}
\item \label{c4.20:s1} There are non-degenerate labelings \(l_1 \colon V(T) \to \RR^{+}\) and \(l_2 \colon V(T) \to \RR^{+}\) such that \((V(T), d_{l_1})\) is a compact ultrametric space and \((V(T), d_{l_2})\) is a discrete totally bounded ultrametric space.
\item \label{c4.20:s2} \(T\) is a finite tree.
\end{enumerate}
\end{corollary}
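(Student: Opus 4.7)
The plan is to prove both implications directly from the three cited results.

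For the implication \ref{c4.20:s2}~$\Rightarrow$~\ref{c4.20:s1}, suppose that \(T\) is finite. Then \(V(T)\) is a finite set. Define \(l_1 = l_2 \colon V(T) \to \RR^{+}\) by \(l_1(v) = 1\) for every \(v \in V(T)\). This labeling is clearly non-degenerate. Since \(V(T)\) is finite, the ultrametric space \((V(T), d_{l_1}) = (V(T), d_{l_2})\) is finite, and therefore it is simultaneously compact, totally bounded, and discrete. Thus both labelings \(l_1\) and \(l_2\) meet the requirements of \ref{c4.20:s1}.

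For the implication \ref{c4.20:s1}~$\Rightarrow$~\ref{c4.20:s2}, suppose that such non-degenerate labelings \(l_1\) and \(l_2\) exist. First, the compactness of \((V(T), d_{l_1})\) together with Theorem~\ref{t4.18} yields that the tree \(T\) is \emph{rayless}. Second, the discreteness and total boundedness of \((V(T), d_{l_2})\) together with Theorem~\ref{t11.16} yield that \(T\) is \emph{locally finite}. Hence \(T\) is a connected graph that is simultaneously rayless and locally finite.

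Now Proposition~\ref{p3.2} asserts that every infinite connected graph either contains a ray or has a vertex of infinite degree. Taking the contrapositive, any connected graph which is both rayless and locally finite must be finite. Applying this to our tree \(T\), we conclude that \(T\) is finite, which proves \ref{c4.20:s2}. The argument is essentially a direct combination of the three cited results, so there is no substantial obstacle; the only care needed is to observe in the easy direction that the constant labeling \(v \mapsto 1\) is non-degenerate and produces an ultrametric space with the desired properties simultaneously.
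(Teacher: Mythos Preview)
Your proof is correct and follows essentially the same route as the paper, which merely states that the corollary follows from Proposition~\ref{p3.2} and Theorems~\ref{t11.16} and~\ref{t4.18}. You have simply made explicit the two steps the paper leaves implicit: the trivial \ref{c4.20:s2}~$\Rightarrow$~\ref{c4.20:s1} direction via a constant labeling, and the contrapositive application of Proposition~\ref{p3.2} to combine ``rayless'' and ``locally finite'' into ``finite''.
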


\section{Some examples and conjectures}

Let us consider examples of totally bounded non-complete ultrametric spaces, and compact ultrametric spaces generated by labeled trees having infinitely many vertices of infinite degree. To construct these examples, we use the \emph{gluing} a given set of labeled trees to a fixed labeled tree. 

Let \(\mathcal{F} = \{T_i(l_i) \colon i \in I\}\) be a nonempty set of labeled trees \(T_i = T_i(l_{i})\) for which
\begin{equation}\label{e4.34}
V(T_{i_1}) \cap V(T_{i_2}) = \varnothing
\end{equation}
holds for all distinct \(i_1\), \(i_2 \in I\), and let \(T^{*} = T^{*}(l^{*})\) be a labeled tree such that \(V(T_i) \cap V(T^{*})\) is a single-point set \(\{v_i\}\),
\begin{equation}\label{e4.35}
V(T_i) \cap V(T^{*}) = \{v_i\}
\end{equation}
for every \(i \in I\). Let us suppose also
\begin{equation}\label{e4.36}
l^{*}(v_i) = l_{i}(v_i)
\end{equation}
for every \(i \in I\) if \(v_i\) satisfies \eqref{e4.35}. Then we define the gluing \(\mathcal{F}\) to \(T^{*}\) as a labeled graph \(T = T(l)\) with 
\begin{equation}\label{e4.37}
V(T) \stackrel{\mathrm{def}}{=} V(T^{*}) \cup \left(\bigcup_{i \in I} V(T_i)\right), \quad E(T) \stackrel{\mathrm{def}}{=} E(T^{*}) \cup \left(\bigcup_{i \in I} E(T_i)\right)
\end{equation}
and \(l \colon V(T) \to \RR\) such that
\begin{equation}\label{e4.38}
l(v) = \begin{cases}
l^{*}(v) & \text{if } v \in V(T^{*}),\\
l_{i}(v) & \text{if } v \in V(T_i) \text{ for some } i \in I.
\end{cases}
\end{equation}
Using equalities \eqref{e4.34}--\eqref{e4.38} one can simply show that \(T = T(l)\) is a well-defined labeled tree for which the labeling \(l\) is non-degenerate if and only if all labelings \(l_i\), \(i \in I\), and \(l^{*}\) are non-degenerate.

\begin{example}\label{ex11.10}
Let \(R^{*} = R^{*}(l^{*})\) be a labeled ray such that \(V(R^{*}) = \mathbb{N}\) and
\[
\bigl(\{m, n\} \in E(R^{*})\bigr) \Leftrightarrow \bigl(|m-n|=1\bigr)
\]
for all \(m\), \(n \in \mathbb{N}\) and, let the equality
\[
l^{*}(m) = \begin{cases}
\frac{1}{m} & \text{if \(m\) is odd},\\
0 & \text{if \(m\) is even}
\end{cases}
\]
hold for each \(m \in \NN\). Moreover, for every even \(m \in \mathbb{N}\) we define a labeled star \(S_m(l_m)\) with a center \(c_m = m\) and suppose that the following conditions hold:
\[
\quad V(S_m) \cap \NN = \{m\}, \quad l_m(c_m) = 0,
\]
and the restriction \(l_m|_{V(S_m) \setminus \{c_m\}}\) of \(l_m\) on the set \(V(S_m) \setminus \{c_m\}\) is a bijection to the set \(\{\frac{1}{mn} \colon n \in \mathbb{N}\}\); and
\[
V(S_{m_1}) \cap V(S_{m_2}) = \varnothing
\]
holds for all distinct even \(m_1\), \(m_1 \in \mathbb{N}\). Then we can consider the labeled tree \(T = T(l)\) obtained by gluing the set 
\[
\{S_m(l_m) \colon m \in \NN \text{ and } m \text{ is even}\}
\]
to the labeled ray \(R^{*}(l^{*})\) (see Figure~\ref{fig10}). The ultrametric space \((V(T), d_l)\) is totally bounded by Theorem~\ref{t11.8} but not complete by Theorem~\ref{t4.7}.
\end{example}

\begin{figure}[htb]
\begin{center}
\begin{tikzpicture}[
level 1/.style={level distance=\levdist,sibling distance=24mm},
level 2/.style={level distance=\levdist,sibling distance=12mm},
level 3/.style={level distance=\levdist,sibling distance=6mm},
solid node/.style={circle,draw,inner sep=1.5,fill=black},
micro node/.style={circle,draw,inner sep=0.5,fill=black}]

\def\dx{2cm}
\draw (0, 0) -- (6*\dx, 0);
\node [solid node] at (0, 0) [label= below:\(\frac{1}{1}\)] {};
\node [solid node] at (\dx, 0) [label= below:\(0\)] {};
\node at (0, \dx) [label= above:\(T(l)\)] {};
\draw (\dx, 0) -- +(150:0.9*\dx) node [solid node, label= above:\(\frac{1}{2}\)] {};
\draw (\dx, 0) -- +(110:0.8*\dx) node [solid node, label= above:\(\frac{1}{4}\)] {};
\draw (\dx, 0) -- +(80:0.7*\dx) node [solid node, label= above:\(\frac{1}{6}\)] {};
\draw (\dx, 0) +(70:0.67*\dx) node [micro node] {};
\draw (\dx, 0) +(60:0.65*\dx) node [micro node] {};
\draw (\dx, 0) +(50:0.63*\dx) node [micro node] {};
\draw (\dx, 0) +(40:0.6*\dx) node [solid node, label= above right:\(\frac{1}{2n}\)] {};
\draw (\dx, 0) +(30:0.5*\dx) node [micro node] {};
\draw (\dx, 0) +(20:0.45*\dx) node [micro node] {};
\draw (\dx, 0) +(10:0.4*\dx) node [micro node] {};
\node at (\dx, \dx) [label= above:\(S_1(l)\)] {};

\node [solid node] at (2*\dx, 0) [label= below:\(\frac{1}{3}\)] {};

\node [solid node] at (3*\dx, 0) [label= below:\(0\)] {};
\draw (3*\dx, 0) -- +(150:0.9*\dx) node [solid node, label= above:\(\frac{1}{4}\)] {};
\draw (3*\dx, 0) -- +(110:0.8*\dx) node [solid node, label= above:\(\frac{1}{8}\)] {};
\draw (3*\dx, 0) -- +(80:0.7*\dx) node [solid node, label= above:\(\frac{1}{12}\)] {};
\draw (3*\dx, 0) +(70:0.67*\dx) node [micro node] {};
\draw (3*\dx, 0) +(60:0.65*\dx) node [micro node] {};
\draw (3*\dx, 0) +(50:0.63*\dx) node [micro node] {};
\draw (3*\dx, 0) +(40:0.6*\dx) node [solid node, label= above right:\(\frac{1}{4n}\)] {};
\draw (3*\dx, 0) +(30:0.5*\dx) node [micro node] {};
\draw (3*\dx, 0) +(20:0.45*\dx) node [micro node] {};
\draw (3*\dx, 0) +(10:0.4*\dx) node [micro node] {};
\node at (3*\dx, \dx) [label= above:\(S_2(l)\)] {};

\node [micro node] at (3.3*\dx, 0) {};
\node [micro node] at (3.5*\dx, 0) {};
\node [micro node] at (3.7*\dx, 0) {};

\node [solid node] at (4*\dx, 0) [label= below:\(\frac{1}{2m-1}\)] {};

\node [solid node] at (5*\dx, 0) [label= below:\(0\)] {};
\draw (5*\dx, 0) -- +(150:0.9*\dx) node [solid node, label= above:\(\frac{1}{2m}\)] {};
\draw (5*\dx, 0) -- +(110:0.8*\dx) node [solid node, label= above:\(\frac{1}{4m}\)] {};
\draw (5*\dx, 0) -- +(80:0.7*\dx) node [solid node, label= above:\(\frac{1}{6m}\)] {};
\draw (5*\dx, 0) +(70:0.67*\dx) node [micro node] {};
\draw (5*\dx, 0) +(60:0.65*\dx) node [micro node] {};
\draw (5*\dx, 0) +(50:0.63*\dx) node [micro node] {};
\draw (5*\dx, 0) +(40:0.6*\dx) node [solid node, label= above right:\(\frac{1}{2mn}\)] {};
\draw (5*\dx, 0) +(30:0.5*\dx) node [micro node] {};
\draw (5*\dx, 0) +(20:0.45*\dx) node [micro node] {};
\draw (5*\dx, 0) +(10:0.4*\dx) node [micro node] {};
\node at (5*\dx, \dx) [label= above:\(S_m(l)\)] {};

\node [micro node] at (5.3*\dx, 0) {};
\node [micro node] at (5.5*\dx, 0) {};
\node [micro node] at (5.7*\dx, 0) {};
\end{tikzpicture}
\end{center}
\caption{The tree \(T\) has \(\aleph_0\) vertices with degree \(\aleph_0\) and the ultrametric space \((V(T), d_l)\) is totally bounded but not complete.} 
\label{fig10}
\end{figure}
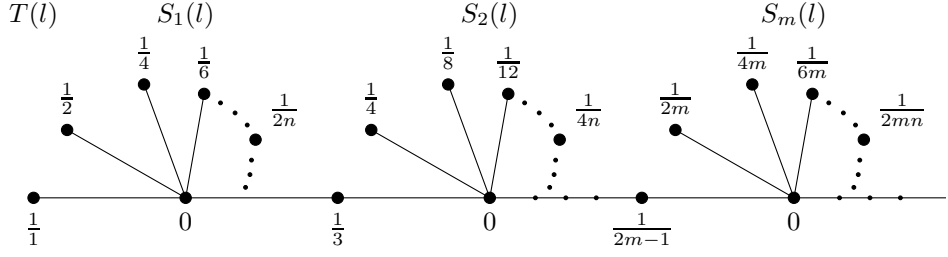

\begin{example}\label{ex4.21}
Let \(\mathbb{P}\) be the set of all integer prime numbers \(p \geqslant 2\) and let \(S^{*} = S^{*}(l^{*})\) be the labeled star with the vertex set \(V(S^{*}) = \{p \in \mathbb{P}\} \cup \{0\}\), and the center \(c^{*} = 0\), and the labeling \(l^{*} \colon V(S^{*}) \to \RR^{+}\) for which \(l^*(0) = 0\) and \(l^{*} (p) = 1/p\) hold for all \(p \in \mathbb{P}\).

For every \(p \in \mathbb{P}\) we define a labeled star \(S_p = S_p(l_p)\) with a center \(c_p\) such that:
\[
V(S_p) \setminus \{c_p\} = \{p^{n} \colon n \in \mathbb{N}\},
\]
and 
\[
c_p \notin \bigcup_{p \in \mathbb{P}} \bigl(V(S_p) \setminus \{c_p\}\bigr) \cup V(S^{*});
\]
\[
l_p(v) = \begin{cases}
0 & \text{if } v = c_p,\\
p^{-n} & \text{if } v = p^{n} \text{ for some } n \in \NN;
\end{cases}
\]
and \(c_{p_1} \neq c_{p_2}\) for all distinct \(p_1\), \(p_2 \in \mathbb{P}\). Then we obtain \(V(S^{*}) \cap V(S_p) = \{p\}\), and \(l^{*} (p) =  l_p(p) = 1/p\), and \(\delta_{S^{*}} (c^{*}) = \delta_{S_p} (c_p) = \aleph_0\) for every \(p \in \mathbb{P}\) .

Let us consider the labeled tree \(T = T(l)\) which is obtained by gluing the set \(\{S_p(l_p) \colon p \in \mathbb{P}\}\) to \(S^{*}(l^{*})\) (see Figure~\ref{fig1}), then the ultrametric space \((V(T), d_l)\) is compact by Theorem~\ref{t4.18}.
\end{example}

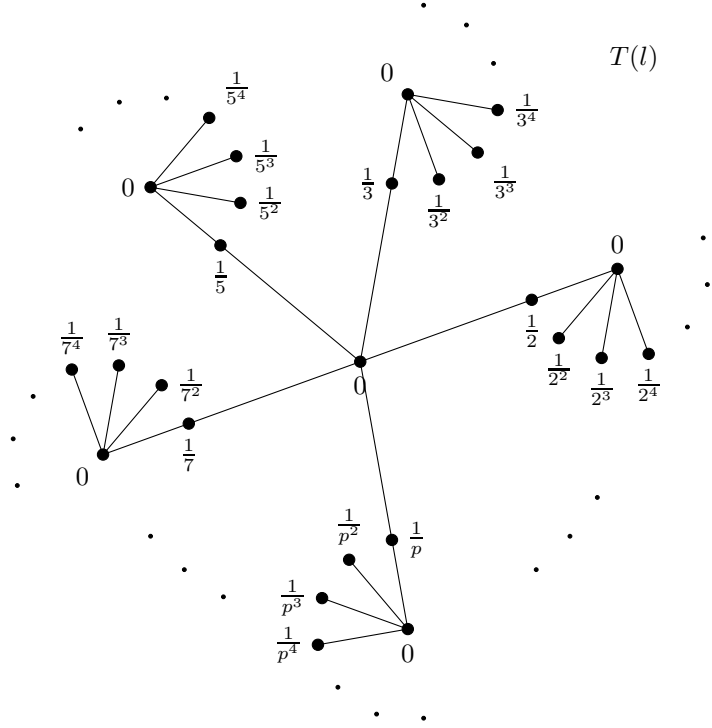
\begin{figure}[ht]
\begin{center}
\begin{tikzpicture}[
level 1/.style={level distance=\levdist,sibling distance=24mm},
level 2/.style={level distance=\levdist,sibling distance=12mm},
level 3/.style={level distance=\levdist,sibling distance=6mm},
solid node/.style={circle,draw,inner sep=1.5,fill=black},
micro node/.style={circle,draw,inner sep=0.5,fill=black}]

\def\dx{1.2cm}
\node at (3*\dx, 3*\dx) [label= above:\(T(l)\)] {};
\node [solid node] at (0, 0) [label= below:\(0\)] {};

\def\dphi{20}
\def\xx{2}
\draw (0, 0) -- (\dphi:3*\dx) node [solid node, label= above:\(0\)] {};
\node [solid node] at (\dphi:2*\dx) [label= below:\(\frac{1}{\xx}\)] {};
\draw (-180+\dphi+30:\dx)++(\dphi:3*\dx) node [solid node, label= below:\(\frac{1}{\xx^2}\)] {} -- (\dphi:3*\dx);
\draw (-180+\dphi+60:\dx)++(\dphi:3*\dx) node [solid node, label= below:\(\frac{1}{\xx^3}\)] {} -- (\dphi:3*\dx);
\draw (-180+\dphi+90:\dx)++(\dphi:3*\dx) node [solid node, label= below:\(\frac{1}{\xx^4}\)] {} -- (\dphi:3*\dx);
\node [micro node] at ($(-180+\dphi+120:\dx) + (\dphi:3*\dx)$) {};
\node [micro node] at ($(-180+\dphi+150:\dx) + (\dphi:3*\dx)$) {};
\node [micro node] at ($(-180+\dphi+180:\dx) + (\dphi:3*\dx)$) {};

\def\dphi{80}
\def\xx{3}
\draw (0, 0) -- (\dphi:3*\dx) node [solid node, label= above left:\(0\)] {};
\node [solid node] at (\dphi:2*\dx) [label= left:\(\frac{1}{\xx}\)] {};
\draw (-180+\dphi+30:\dx)++(\dphi:3*\dx) node [solid node, label= below:\(\frac{1}{\xx^2}\)] {} -- (\dphi:3*\dx);
\draw (-180+\dphi+60:\dx)++(\dphi:3*\dx) node [solid node, label= below right:\(\frac{1}{\xx^3}\)] {} -- (\dphi:3*\dx);
\draw (-180+\dphi+90:\dx)++(\dphi:3*\dx) node [solid node, label= right:\(\frac{1}{\xx^4}\)] {} -- (\dphi:3*\dx);
\node [micro node] at ($(-180+\dphi+120:\dx) + (\dphi:3*\dx)$) {};
\node [micro node] at ($(-180+\dphi+150:\dx) + (\dphi:3*\dx)$) {};
\node [micro node] at ($(-180+\dphi+180:\dx) + (\dphi:3*\dx)$) {};

\def\dphi{140}
\def\xx{5}
\draw (0, 0) -- (\dphi:3*\dx) node [solid node, label= left:\(0\)] {};
\node [solid node] at (\dphi:2*\dx) [label= below:\(\frac{1}{\xx}\)] {};
\draw (-180+\dphi+30:\dx)++(\dphi:3*\dx) node [solid node, label= right:\(\frac{1}{\xx^2}\)] {} -- (\dphi:3*\dx);
\draw (-180+\dphi+60:\dx)++(\dphi:3*\dx) node [solid node, label= right:\(\frac{1}{\xx^3}\)] {} -- (\dphi:3*\dx);
\draw (-180+\dphi+90:\dx)++(\dphi:3*\dx) node [solid node, label= above right:\(\frac{1}{\xx^4}\)] {} -- (\dphi:3*\dx);
\node [micro node] at ($(-180+\dphi+120:\dx) + (\dphi:3*\dx)$) {};
\node [micro node] at ($(-180+\dphi+150:\dx) + (\dphi:3*\dx)$) {};
\node [micro node] at ($(-180+\dphi+180:\dx) + (\dphi:3*\dx)$) {};

\def\dphi{200}
\def\xx{7}
\draw (0, 0) -- (\dphi:3*\dx) node [solid node, label= below left:\(0\)] {};
\node [solid node] at (\dphi:2*\dx) [label= below:\(\frac{1}{\xx}\)] {};
\draw (-180+\dphi+30:\dx)++(\dphi:3*\dx) node [solid node, label= right:\(\frac{1}{\xx^2}\)] {} -- (\dphi:3*\dx);
\draw (-180+\dphi+60:\dx)++(\dphi:3*\dx) node [solid node, label= above:\(\frac{1}{\xx^3}\)] {} -- (\dphi:3*\dx);
\draw (-180+\dphi+90:\dx)++(\dphi:3*\dx) node [solid node, label= above:\(\frac{1}{\xx^4}\)] {} -- (\dphi:3*\dx);
\node [micro node] at ($(-180+\dphi+120:\dx) + (\dphi:3*\dx)$) {};
\node [micro node] at ($(-180+\dphi+150:\dx) + (\dphi:3*\dx)$) {};
\node [micro node] at ($(-180+\dphi+180:\dx) + (\dphi:3*\dx)$) {};

\node [micro node] at (220:3*\dx) {};
\node [micro node] at (230:3*\dx) {};
\node [micro node] at (240:3*\dx) {};

\def\dphi{280}
\def\xx{p}
\draw (0, 0) -- (\dphi:3*\dx) node [solid node, label= below:\(0\)] {};
\node [solid node] at (\dphi:2*\dx) [label= right:\(\frac{1}{\xx}\)] {};
\draw (-180+\dphi+30:\dx)++(\dphi:3*\dx) node [solid node, label= above:\(\frac{1}{\xx^2}\)] {} -- (\dphi:3*\dx);
\draw (-180+\dphi+60:\dx)++(\dphi:3*\dx) node [solid node, label= left:\(\frac{1}{\xx^3}\)] {} -- (\dphi:3*\dx);
\draw (-180+\dphi+90:\dx)++(\dphi:3*\dx) node [solid node, label= left:\(\frac{1}{\xx^4}\)] {} -- (\dphi:3*\dx);
\node [micro node] at ($(-180+\dphi+120:\dx) + (\dphi:3*\dx)$) {};
\node [micro node] at ($(-180+\dphi+150:\dx) + (\dphi:3*\dx)$) {};
\node [micro node] at ($(-180+\dphi+180:\dx) + (\dphi:3*\dx)$) {};

\node [micro node] at (310:3*\dx) {};
\node [micro node] at (320:3*\dx) {};
\node [micro node] at (330:3*\dx) {};
\end{tikzpicture}
\end{center}
\caption{The tree \(T\) has \(\aleph_0\) vertices with degree \(\aleph_0\) and the ultrametric space \((V(T), d_l)\) is compact.} 
\label{fig1}
\end{figure}

The following simple example shows that the class of finite ultrametric spaces, which are representable in the form \((V(T), d_l)\), is a proper subclass of all finite ultrametric spaces.

\begin{example}\label{ex11.11}
Let \(V = \{v_1, v_2, v_3, v_4\}\) be a four-point set and let an ultrametric \(d \colon V \times V \to \RR^{+}\) satisfy the equalities
\begin{gather}\label{ex11.11:e1}
d(v_1, v_2) = d(v_3, v_4) = 1\\
\intertext{and}
\label{ex11.11:e2}
d(v_1, v_3) = d(v_1, v_4) = d(v_2, v_3) = d(v_2, v_4) = 2.
\end{gather}
Let \(T = T(l)\) be a labeled tree such that \(V(T) = V\). We claim that the ultrametric spaces \((V, d)\) and \((V(T), d_l)\) are not isometric for any non-degenerate labeling \(l\). Indeed, if there is a non-degenerate \(l \colon V(T) \to \RR^{+}\) for which \((V, d)\) and \((V(T), d_l)\) are isomorphic, then from~\eqref{e11.3} and \eqref{ex11.11:e1} it follows that
\[
\max_{1 \leqslant i \leqslant 4} l(v_i) \leqslant 1.
\]
The last inequality and \eqref{e11.3} imply that \(d_l(v, u) \leqslant 1\) holds for all \(u\), \(v \in V\), contrary to \eqref{ex11.11:e2}.
\end{example}

It seems to be interesting to get a purely metric characterization of ultrametric spaces generated by labeled trees.

\begin{conjecture}\label{con11.1}
Let \((X, d)\) be a discrete nonempty totally bounded ultrametric space. Then the following statements are equivalent:
\begin{enumerate}
\item\label{con11.1:s1} There is a labeled tree \(T = T(l)\) such that \((V(T), d_l)\) and \((X, d)\) are isometric.
\item\label{con11.1:s2} For every \(B \in \BB_{X}\), there are \(c \in X\) and \(r > 0\) such that
\[
B = \{x \in X \colon d(x, c) = r\} \cup \{c\}
\]
i.e., the ball \(B\) is the sphere \(S(c, r) = \{x \in X \colon d(x, c) = r\}\) with the added center \(c\).
\end{enumerate}
\end{conjecture}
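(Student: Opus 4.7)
The plan is to prove the two implications separately, using the tree structure directly for \(\ref{con11.1:s1}\Rightarrow\ref{con11.1:s2}\) and a recursive construction along the hierarchy of balls for \(\ref{con11.1:s2}\Rightarrow\ref{con11.1:s1}\).

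For the forward direction, fix \(B \in \mathbf{B}_X\). If \(|B| = 1\), say \(B = \{v\}\), then \(v\) is isolated and any \(\rho\) in the gap \(\bigl(0, \inf\{d(v, u) \colon u \neq v\}\bigr)\) gives \(B = S(v, \rho) \cup \{v\}\) with \(S(v, \rho) = \varnothing\). If \(|B| \geqslant 2\), first use the strong triangle inequality to see that \(B\) is a connected subgraph of \(T\) (for \(u, w \in B\) every vertex on the path \(P_{u, w}\) lies in \(B\)); next, invoke Theorem~\ref{t11.8}, according to which \(V_{\varepsilon} = \{v \in V(T) \colon l(v) \geqslant \varepsilon\}\) is finite for every \(\varepsilon > 0\), so the supremum \(\sup_{w \in B} l(w)\) is attained at some \(c \in B\), necessarily with \(\rho := l(c) > 0\). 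Formula~\eqref{e11.3} and the maximality of \(l(c)\) along each path \(P_{u, c} \subseteq B\) then yield \(d_l(u, c) = \rho\) for every \(u \in B \setminus \{c\}\), and the reverse inclusion \(\{u \in X \colon d(u, c) = \rho\} \subseteq B\) follows from \(d(u, v) \leqslant \max\{d(u, c), d(c, v)\} < r\).

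For the converse, the tree is constructed recursively along the poset of open balls. Given a non-singleton ball \(B\) with center \(c_B\) and associated radius \(r_B\) from condition~\ref{con11.1:s2}, a key observation is that \(B \setminus \{c_B\} = S(c_B, r_B)\) partitions into equivalence classes under \(y \sim z \Leftrightarrow d(y, z) < r_B\); each class \(C\) coincides with the open ball \(B_{r_B}(y)\) for any \(y \in C\), so condition~\ref{con11.1:s2} applies again to \(C\) with a strictly smaller \(r_C < r_B\). The construction is then: beginning with \(B = X\), choose \(c_X\) (breaking ties by a fixed enumeration of the at most countable set \(X\)), set \(l(c_X) := r_X\), and for each sub-ball \(C\) either append \(C = \{y\}\) as a leaf with \(l(y) := 0\) joined to \(c_X\) by an edge, or recurse on \(C\) and connect \(c_X\) to the resulting \(c_C\). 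The labeling is non-degenerate because every edge is incident to a center carrying a strictly positive label, and induction on ball inclusion combined with \eqref{e11.3} gives \(d_l(u, v) = l(c_{B_{uv}}) = r_{B_{uv}} = d(u, v)\), where \(B_{uv}\) is the smallest ball that contains both \(u\) and \(v\) and is visited by the recursion.

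The main obstacle is proving that the recursion exhausts \(X\), i.e., terminates for every \(x \in X\). Fixing such an \(x\) and letting \(B^{(0)} = X \supsetneq B^{(1)} \supsetneq \ldots\) be the chain produced by the recursion with centers \(c^{(i)} \neq x\) and radii \(r^{(i)}\), the identity \(d(c^{(i)}, c^{(j)}) = r^{(i)}\) for \(i < j\) follows from the strong triangle inequality applied to \(d(x, c^{(i)}) = r^{(i)}\) together with \(c^{(j)} \in B^{(i+1)} = B_{r^{(i)}}(x)\). Were the chain infinite with \(\lim_{i \to \infty} r^{(i)} > 0\), the sequence \((c^{(i)})_{i \in \NN}\) would have all pairwise distances bounded below by a positive constant, hence no Cauchy subsequence, contradicting Proposition~\ref{p2.11} and the total boundedness of \(X\); thus \(r^{(i)} \to 0\). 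Discreteness of \(X\) then supplies an isolation radius \(\varepsilon_x > 0\), and once \(r^{(i)} < \varepsilon_x\) one has \(B_{r^{(i)}}(x) = \{x\}\), so \(x\) is attached as a leaf at finite level. The subtle point is that discreteness and total boundedness must be used in tandem here, neither alone suffices; once termination is in hand, the remaining verifications — that the constructed graph is connected, acyclic, and realizes each \(x \in X\) exactly once — are routine inductions on the recursion depth.
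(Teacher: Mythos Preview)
The paper does not prove this statement: it is presented as Conjecture~\ref{con11.1} in the final section, explicitly flagged as an open problem (``It seems to be interesting to get a purely metric characterization\ldots''), and no proof is given. So there is nothing in the paper to compare your argument against; what you have written is, in effect, a proposed resolution of the conjecture.

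Assessing your proof on its own merits, both directions are essentially correct.

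For \(\ref{con11.1:s1}\Rightarrow\ref{con11.1:s2}\) the argument is sound; the one step left implicit is the inequality \(\rho<r\), which you need in order to conclude \(\max\{d(u,c),d(c,v)\}<r\). This follows immediately from what you have already shown: for any \(u\in B\setminus\{c\}\) the first step gives \(\rho=d_l(u,c)\), and then \(d_l(u,c)\leqslant\max\{d_l(u,v),d_l(v,c)\}<r\). It would be worth making this explicit.

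For \(\ref{con11.1:s2}\Rightarrow\ref{con11.1:s1}\) the recursive ball decomposition is the natural construction, and you have correctly identified termination as the only non-routine point. Your combination of total boundedness (via Proposition~\ref{p2.11}, using \(d(c^{(i)},c^{(j)})=r^{(\min(i,j))}\) to rule out \(\lim r^{(i)}>0\)) and discreteness (to force a singleton once \(r^{(i)}<\varepsilon_x\)) is exactly right. One small remark: the radius \(r_B\) is uniquely determined as \(\operatorname{diam} B\) whenever \(|B|\geqslant 2\), so only the choice of \(c_B\) requires the tie-breaking enumeration. The residual verifications you call ``routine'' (that the graph is a tree, that \(l\) is non-degenerate, and that \(d_l=d\) via the least common ball \(B_{uv}\)) really are routine once every point is placed at finite depth.
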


In conclusion, we formulate a simple conjecture linking the properties of Cauchy sequences in \((V(T), d_l)\) with the structure of the hull of the range sets of these sequences (cf. Lemma~\ref{l4.8}).

\begin{conjecture}\label{con5.2}
Let \(T\) be a tree and let \((v_n)_{n \in \NN}\) be a sequence of distinct vertices of \(T\). Then the following conditions are equivalent:
\begin{enumerate}
\item The hull of the range set of \((v_n)_{n \in \NN}\) is a union of a ray with some finite tree.
\item For every non-degenerate \(l \colon V(T) \to \RR\) the existence of Cauchy subsequence in \((v_n)_{n \in \NN}\) implies that \((v_n)_{n \in \NN}\) is also a Cauchy sequence.
\end{enumerate}
\end{conjecture}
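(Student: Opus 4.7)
For (i) $\Rightarrow$ (ii), write $H_A = R \cup F$ with $R$ a ray on vertex set $\{r_1, r_2, \ldots\}$ and $F$ a finite subtree. Because the $v_n$ are distinct and $V(H_A) \setminus V(R) \subseteq V(F)$ is finite, only finitely many $v_n$ lie outside $V(R)$; write $v_n = r_{f(n)}$ for $n \geq N$ with $f$ injective, so $f(n) \to \infty$. Given a Cauchy subsequence $(v_{n_k})$, extract a sub-subsequence along which $f$ is strictly increasing (possible since $f(n_k) \to \infty$): this is a Cauchy subsequence of the natural enumeration of $R$, so Lemma~\ref{l4.8} forces that enumeration to be Cauchy and Lemma~\ref{l4.7} yields $l(r_m) \to 0$. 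For $n \geq N$ the path from $v_n$ to $v_{n+1}$ is a subpath of $R$ lying in the tail $\{r_i : i \geq \min(f(n), f(n+1))\}$, and $\min(f(n), f(n+1)) \to \infty$, so $d_l(v_n, v_{n+1}) \to 0$ and Proposition~\ref{p4.5} gives (ii).

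For (ii) $\Rightarrow$ (i), I argue by contraposition: assume $H_A$ is not a union of a ray and a finite subtree. The objective is to exhibit disjoint infinite subsets $A_1, A_2 \subseteq A$ with $V(H_{A_2}) \cap A_1 = \emptyset$, and then define $l$ to equal $1$ on every vertex of $T$ outside $V(H_{A_2})$ (in particular on $A_1$) and to be strictly positive but tending to zero on $V(H_{A_2})$. Since every path in $T$ between two vertices of $A_2$ lies in $V(H_{A_2})$ by Lemma~\ref{l9.14}, the subsequence of $(v_n)$ consisting of those $n$ with $v_n \in A_2$ has consecutive distances tending to $0$ and is Cauchy by Proposition~\ref{p4.5}. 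Meanwhile, the infinitely many $n$ with $v_n \in A_1$ — whose existence follows from $A_1 \subseteq A$ being infinite and $(v_n)$ enumerating $A$ — force $d_l(v_n, v_{n \pm 1}) \geq 1$ infinitely often, so $(v_n)$ is not Cauchy.

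The existence of the pair $A_1, A_2$ rests on a structural dichotomy. If $H_A$ has a vertex $v^*$ of infinite degree — automatic from Proposition~\ref{p3.2} in the rayless case — the components of $H_A - v^*$ each meet $A$ by hull minimality; I partition the index set of the components into two infinite sub-families and let $A_1, A_2$ be the $A$-vertices in the corresponding unions. Disjointness of the sub-families gives $V(H_{A_2}) \cap A_1 = \emptyset$, because paths between $A_2$-elements in $T$ travel through $v^*$ and stay inside the $A_2$-components. Otherwise $H_A$ is locally finite and, by Proposition~\ref{p3.2}, contains a ray $R$ with $V(H_A) \setminus V(R)$ infinite; a further application of Proposition~\ref{p3.2} inside any infinite branch shows that either $H_A$ has infinitely many non-trivial branches off $R$ or it contains a second ray $R_B$ disjoint from $R$, and in both sub-subcases a similar splitting of $A$ works, with $l(r_i) = 1/i$ on the ``main'' ray and alternating ``high''/``small'' labels on the remaining branches or on $R_B$.

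The main technical obstacle is coordinating the label decay on $V(H_{A_2})$ with non-degeneracy: no pair of adjacent vertices of $T$ may both receive label $0$. This is handled by giving every vertex of $V(H_{A_2})$ a strictly positive label (the decay approaching, but not reaching, $0$), with the sole exception of the bottleneck $v^*$ in the infinite-degree case, whose $T$-neighbors all carry positive labels by construction. The most delicate bookkeeping occurs in the caterpillar-type subcase with no infinite-degree vertex, where the labels on the branches must be chosen so that both $l(r_i) \to 0$ along $R$ and the $A_2$-labels $\to 0$ along the enumeration of $A_2$ inherited from $(v_n)$; the latter follows automatically because the permutation of $\mathbb{N}$ induced by this enumeration takes each positive integer exactly once and therefore has values tending to infinity.
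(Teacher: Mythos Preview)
This statement is a \emph{conjecture} in the paper---the authors do not prove it---so there is no paper proof to compare against; I can only assess your argument on its own.

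Your direction (i) $\Rightarrow$ (ii) is correct: reducing to the tail of $R$, invoking Lemmas~\ref{l4.7} and~\ref{l4.8}, and observing that $\min(f(n),f(n+1))\to\infty$ forces $d_l(v_n,v_{n+1})\to 0$ are exactly the right moves.

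The reverse direction has a genuine gap. Your central claim---that the $A_2$-subsequence of $(v_n)$ has consecutive $d_l$-distances tending to $0$---does not follow from ``labels strictly positive but tending to zero on $V(H_{A_2})$'' in some enumeration. The path between consecutive $A_2$-terms lies in $H_{A_2}$, yes, but it may repeatedly traverse a \emph{fixed} vertex $w$ carrying a fixed positive label. Concretely, in the infinite-degree case: if a component $C$ of $H_A-v^*$ placed in your $A_2$-family contains infinitely many points of $A$, the $A_2$-subsequence keeps returning to $C$, and consecutive terms inside $C$ can straddle an interior vertex $w\in V(C)$ with $l(w)>0$ fixed---so those distances do not tend to $0$. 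You have identified the wrong ``main technical obstacle'': non-degeneracy is easy; making the $A_2$-subsequence Cauchy is the real work.

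Your permutation remark in the last paragraph is the right idea, but it only applies after you thin $A_2$ to \emph{one} $A$-vertex per component (or per branch), label each component uniformly by a value depending on its index, and verify that the path between consecutive thinned-$A_2$ terms sees only those per-component labels together with the zero at $v^*$. You gesture at this for the caterpillar subcase but do not carry it out, and the infinite-degree and two-ray subcases need the same repair plus separate hull-minimality arguments (that every component of $H_A-v^*$ meets $A$, as in the proof of Theorem~\ref{t4.7}; and, in the locally finite case, that $A$ hits the tail of $R$ infinitely often whenever only finitely many $r_i$ carry branches).
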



\providecommand{\bysame}{\leavevmode\hbox to3em{\hrulefill}\thinspace}
\providecommand{\MR}{\relax\ifhmode\unskip\space\fi MR }
\providecommand{\MRhref}[2]{%
  \href{http://www.ams.org/mathscinet-getitem?mr=#1}{#2}
}
\providecommand{\href}[2]{#2}

\end{document}